\newtheorem{thmx}{Theorem}
\newtheorem{thm}{Theorem}[section]
\newtheorem{prop}[thm]{Proposition}
\newtheorem{cor}[thm]{Corollary}
\newtheorem{lem}[thm]{Lemma}
\theoremstyle{definition}
\newtheorem{rem}[thm]{Remark}
\newtheorem{defn}[thm]{Definition}
\newtheorem{ex}[thm]{Example}
\newcommand{\C}{\mathbb{C}}
\newcommand{\Q}{\mathbb{Q}}
\newcommand{\R}{\mathbb{R}}
\newcommand{\Z}{\mathbb{Z}}
\title[Fundamental groups of aspherical manifolds and maps of non-zero degree]
 {Fundamental groups of aspherical manifolds\\ and maps of non-zero degree}
\author{Christoforos Neofytidis}
\address{Department of Mathematical Sciences, {\smaller SUNY} Binghamton, Binghamton, NY 13902-6000, USA}
\email{chrisneo@math.binghamton.edu}
\date{\today; \copyright{\ C. Neofytidis 2015--2017}}
\subjclass[2010]{57M05, 57M10, 57M50, 55M25, 22E25, 57N65, 53C23, 20F34}
\begin{document}
\maketitle

\begin{abstract}
We define a new class of irreducible groups, called groups {\em not infinite-index presentable by products} or {\em not IIPP}.
We prove that certain aspherical manifolds with fundamental groups not IIPP do not admit maps of non-zero degree from direct products. This extends previous results of Kotschick and L\"oh, providing new classes of aspherical manifolds -- beyond those non-positively curved ones which were predicted by Gromov -- that do not admit maps of non-zero degree from direct products.

A sample application is that an aspherical geometric 4-manifold admits a map of non-zero degree from a direct product if and only if it is a virtual product itself. This completes a characterization of the product geometries due to Hillman.  Along the way we prove that for certain groups the property IIPP is a criterion for reducibility. This especially implies the vanishing of the simplicial volume of the corresponding aspherical manifolds. 
It is shown that aspherical manifolds with reducible fundamental groups do always admit maps of non-zero degree from direct products.  
\end{abstract}

\section{Introduction}

A fundamental topic in topology is the study of maps of non-zero degree between manifolds of the same dimension. The existence of a map of non-zero degree defines a transitive relation, called domination relation, on the homotopy types of closed oriented manifolds. Whenever such a map $M\longrightarrow N$ exists we say that $M$ {\em dominates} $N$ and denote this by $M\geq N$. Gromov suggested investigating the domination relation as defining a partial order of manifolds and formulated several conjectures for candidate classes that might (not) be comparable under $\geq$~\cite{Gromov:metric,Gromov:bounded,Gromov:maps,CarlsonToledo,Wang:3-mfdsasp,KotschickLoeh1}. A particular case of the domination question is when the domain is a non-trivial direct product. That case was raised in Gromov's theory of functorial semi-norms (such as the simplicial volume, see also~\cite{MilnorThurston}) and of topological rigidity, where Gromov predicted that the fundamental classes of certain aspherical manifolds with large universal covers are not dominated by products. Furthermore, the domination-by-products question has its own independent interest, being a special case of Steenrod's classical problem on the realization of homology classes by manifolds~\cite{EilenbergSteenrod}. 

The homotopy types of aspherical manifolds are determined by their fundamental groups. Long-standing rigidity conjectures state that their homeomorphism types are determined by their fundamental groups as well. Several related questions concern the decomposition of the fundamental group of an aspherical manifold as a direct product and the realization of finitely presented Poincar\'e duality groups as fundamental groups of aspherical manifolds. An open question in this context is whether every closed aspherical manifold $M$ with fundamental group $\Gamma_1 \times \Gamma_2$ can be decomposed %(up to homotopy, homeomorphism, etc) 
as a product of closed manifolds with fundamental groups $\Gamma_1$ and $\Gamma_2$ respectively. In particular, it is an open question whether aspherical manifolds with {\em reducible} fundamental groups (that is, %fundamental groups that are 
virtual products of two infinite groups) are finitely covered -- and therefore dominated -- by products. L\"uck showed how to obtain an affirmative answer in dimensions higher than four, relying on very strong assumptions concerning the Farrell-Jones conjecture and the cohomological dimensions of the involved groups~\cite{Lue}. For non-positively curved manifolds
an affirmative answer is given by Gromoll-Wolf's isometric splitting theorem~\cite{GromollWolf}. 

\subsection{Summary of results}

Before stating the main results in detail, we give a general overview of the content of this paper. First, applying Thom's emphatic answer (in rational homology)~\cite{Thom} of Steenrod's realization problem, we will show that every closed aspherical manifold with reducible fundamental group is indeed dominated by products (Theorem \ref{thmA}). 
Therefore, in this paper we are interested to investigate whether aspherical closed manifolds with irreducible (i.e. not reducible) fundamental group admit arbitrary maps of non-zero degree by non-trivial direct products (clearly such maps cannot be homotopic to coverings). 

Our goal is to introduce a new algebraic obstruction to domination by products for aspherical manifolds whose fundamental groups have non-trivial center. This property will be termed ``groups not infinite-index presentable by products" or ``not IIPP" for short (cf. Definition \ref{def:notIIPP}). We will obtain new classes of aspherical manifolds that are not dominated by direct products, extending a previous obstruction of Kotschick and L\"oh~\cite{KotschickLoeh1} (called ``groups not presentable by products"), and expanding Gromov's predictions (about irreducible, non-positively curved manifolds~\cite{Gromov:metric}) to aspherical manifolds with non-trivial center. In particular, we will show that large classes of circle bundles with fundamental groups not IIPP are not dominated by products (Theorem \ref{thmB}). In many cases, we will prove that the existence of a map of non-zero degree from a direct product is equivalent to the existence of a finite covering of the same product type (Theorem \ref{thmC}). 

By definition, a group not presentable by products is not IIPP and a group not IIPP is irreducible, but none of those implications can be reversed (cf. Remark \ref{r:nonequivalentconditions}). Nevertheless, the characterization of Theorem \ref{thmC} contains a special case of equivalence between ``irreducible" and ``not IIPP"
for arbitrary (i.e. not necessarily finitely generated or torsion-free) groups: If the quotient of a group $\Gamma$ by its center $C(\Gamma)$ is not presentable by products, then $\Gamma$ is reducible if and only $\Gamma$ is IPPP (Theorem \ref{thmD}). This result fails when $\Gamma/C(\Gamma)$ is presentable by products. For example, the $5$-dimensional Heisenberg group $H_5$ -- whose quotient by its center is $\Z^4$ and thus presentable by products -- is IIPP but irreducible (cf. Example \ref{ex:Heisenberg}). Dimension five is the sharp dimension in which this phenomenon occurs, since ``IIPP" and ``reducible" are equivalent for fundamental groups of aspherical geometric manifolds (in the sense of Thurston) in dimensions $\leq4$; cf. Section \ref{s:proofE}, in particular Theorem \ref{t:4-mfdgroupsIIPP}.% \ref{thmE}).

As a sample application of this study we deduce (combining Theorems \ref{thmA} and \ref{thmD}) the vanishing of the simplicial volume of certain aspherical manifolds (Corollary \ref{c:simplicial}).
Moreover, our results combined with Hillman's work~\cite{Hillman} show that a $4$-dimensional geometric manifold %(in the sense of Thurston) 
is dominated by a product if and only if it is covered by a product (Theorem \ref{thmF}).
Further applications about ordering manifolds using maps of non-zero degree~\cite{CarlsonToledo,Wang:3-mfdsasp} and the monotonicity of Kodaira dimensions with respect to the existence of maps of non-zero degree~\cite{Zhang} will be presented in a subsequent paper~\cite{NeoOrder}.

\subsection{Main theorems}

In the development of the theory of bounded cohomology, Gromov~\cite{Gromov:bounded} conjectured that the
fundamental classes of irreducible, locally symmetric spaces of non-compact type cannot be represented by non-trivial products 
(of surfaces). Kotschick and L\"oh~\cite{KotschickLoeh1} verified Gromov's conjecture, by finding an algebraic obstruction to domination by products for rationally essential
manifolds. A closed oriented connected $n$-dimensional manifold $M$ is called {\em rationally essential} if the classifying map of the universal covering $c_M
\colon M \longrightarrow B\pi_1(M)$ sends the fundamental class of $M$ to a non-trivial element in $H_n(B\pi_1(M);\Q)$; see~\cite{Gromov:metric,Gromov:bounded}. The non-domination criterion of~\cite{KotschickLoeh1}, given in Theorem \ref{t:KotschickLoehmain} below, reads as follows: An infinite group $\Gamma$ is called {\em not presentable by products} if, for every
homomorphism $\varphi \colon \Gamma_1 \times \Gamma_2 \longrightarrow \Gamma$ onto a finite index subgroup of $\Gamma$ one of the factors
$\Gamma_i$ has finite image $\varphi(\Gamma_i) \subset \Gamma$.

As we shall see in Section \ref{s:notation}, the proof of Theorem \ref{t:KotschickLoehmain} was obtained by showing that the existence of a map of non-zero degree $f \colon X_1 \times X_2 \longrightarrow M$, with $M$ rationally essential, implies a presentation by products $\Gamma_1 \times \Gamma_2\longrightarrow\pi_1(M)$, where $\Gamma_i := \mathrm{im}(\pi_1(f\bigl\vert_{X_i}))$. However, the proof of that statement does not give any insight on the index of the presenting factors $\Gamma_1$, $\Gamma_2$ in $\pi_1(M)$. In fact, all the possibilities for the indices $[\pi_1(M):\Gamma_i]$ may occur, as we will see in Example \ref{ex:allcasesindex}. The targets of that example are however direct products of (aspherical) manifolds. Hence, a main question is to understand how close to ``reducible" must the fundamental group of an aspherical manifold be, in order this manifold to be dominated by products. Indeed, the case of reducible fundamental groups has a complete affirmative answer:

\begin{thmx}\label{thmA}
A closed aspherical manifold with reducible fundamental group is dominated by a non-trivial direct product of closed oriented manifolds.
\end{thmx}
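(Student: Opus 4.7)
The plan is to first pass to a finite cover $\tilde M$ of $M$ with fundamental group $\Gamma_1\times\Gamma_2$ (both factors infinite, by reducibility), and then to exhibit a non-zero degree map $X_1\times X_2\to\tilde M$ by realising the fundamental class of $\tilde M$ as a pure K\"unneth tensor in the homology of $B\Gamma_1\times B\Gamma_2$, with the two tensor factors represented by closed oriented manifolds via Thom's rational realisation theorem.

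The first step is structural. The cover $\tilde M$ is closed oriented aspherical, so $\pi_1(\tilde M)$ is torsion-free and is a Poincar\'e duality group of dimension $n=\dim\tilde M$. Invoking the structure theory of duality groups (Bieri--Eckmann) for direct product factorisations, I will conclude that each factor $\Gamma_i$ is itself a $\mathrm{PD}$-group of some positive dimension $n_i$, with $n_1+n_2=n$. Asphericity then supplies a homotopy equivalence $\tilde M\simeq B(\Gamma_1\times\Gamma_2)\simeq B\Gamma_1\times B\Gamma_2$.

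The homological core of the argument is the following. Poincar\'e duality forces $H_n(\tilde M;\Q)\cong\Q$, and K\"unneth identifies this with $\bigoplus_{p+q=n}H_p(B\Gamma_1;\Q)\otimes H_q(B\Gamma_2;\Q)$; since each $\Gamma_i$ is a $\mathrm{PD}_{n_i}$-group, only the summand in bidegree $(n_1,n_2)$ survives, and the fundamental class is therefore the pure tensor
\[
[\tilde M]\;=\;c\,(\alpha\otimes\beta),\qquad 0\neq\alpha\in H_{n_1}(B\Gamma_1;\Q),\quad 0\neq\beta\in H_{n_2}(B\Gamma_2;\Q),
\]
for some $c\in\Q^{\times}$. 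Thom's rational realisation theorem then produces closed oriented manifolds $X_i^{n_i}$ and continuous maps $f_i\colon X_i\to B\Gamma_i$ with $(f_i)_*[X_i]$ a non-zero rational multiple of the chosen generator. The product $f_1\times f_2\colon X_1\times X_2\to B\Gamma_1\times B\Gamma_2$, composed with a homotopy inverse of the classifying equivalence, yields a map $X_1\times X_2\to\tilde M$ sending the fundamental class to a non-zero multiple of $[\tilde M]$; composing once more with the finite covering $\tilde M\to M$ produces the desired non-zero degree map from the non-trivial direct product $X_1\times X_2$ onto $M$.

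The main obstacle I anticipate lies not in the topological assembly but in verifying that the K\"unneth decomposition of $[\tilde M]$ collapses to a \emph{single} bidegree $(n_1,n_2)$ with both $n_i$ strictly positive -- so that the fundamental class is genuinely a pure tensor and the resulting $X_1,X_2$ are both of positive dimension. This is exactly what the algebraic input on direct product factorisations of $\mathrm{PD}$-groups provides, combined with torsion-freeness of $\pi_1(\tilde M)$ (which excludes $n_i=0$). Once this is established, the remainder of the argument is a direct combination of Thom realisation with the product structure of $B\Gamma_1\times B\Gamma_2$ and the classifying equivalence for aspherical targets.
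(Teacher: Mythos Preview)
Your proof is correct and follows the same architecture as the paper's: pass to a finite cover with product fundamental group, identify it with $B\Gamma_1\times B\Gamma_2$, use K\"unneth to write the fundamental class as a cross product of positive-degree classes, and realise these by closed manifolds via Thom. The only substantive difference is the algebraic input used to rule out the extreme bidegrees $(0,n)$ and $(n,0)$. You invoke the Bieri--Eckmann result that a direct factor of a $\mathrm{PD}_n$-group is itself a $\mathrm{PD}_{n_i}$-group (with $n_1+n_2=n$ and $n_i>0$ since the $\Gamma_i$ are infinite and torsion-free), which pins down the unique contributing bidegree as $(n_1,n_2)$. The paper instead uses Strebel's theorem that an infinite-index subgroup of a $\mathrm{PD}_n$-group has cohomological dimension strictly less than $n$; this only shows that the contributing bidegree $k$ satisfies $0<k<n$, but since $H_n(\tilde M;\Q)\cong\Q$ that already suffices to extract a nonzero pure tensor. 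Your route yields slightly more structure (the factors are themselves $\mathrm{PD}$-groups of determined dimensions); the paper's route uses a lighter lemma. Both converge on the same Thom-realisation endgame.
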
  

Thus our purpose in this article is to examine aspherical manifolds with irreducible fundamental groups. We extend the notion ``group not presentable by products", including groups with (virtually) infinite center: 

\begin{defn}\label{def:notIIPP}
 An infinite group $\Gamma$ is called {\em not infinite-index presentable by products} or {\em not IIPP} if, for every homomorphism $\varphi \colon \Gamma_1 \times \Gamma_2
\longrightarrow \Gamma$ onto a finite index subgroup of $\Gamma$ at least one of the images $\varphi(\Gamma_i)$ has finite index in $\Gamma$. 
\end{defn}

Otherwise, if such homomorphism $\varphi$ exists with $[\Gamma:\varphi(\Gamma_i)]=\infty$ for both $i$, then $\Gamma$ is called {\em infinite-index presentable by products} or {\em IIPP}. 

\begin{rem}\label{r:nonequivalentconditions}
The following inclusions hold immediately by definition:  
\begin{center}
$\{$groups not presentable by products$\}$ $\subset$ $\{$groups not IIPP$\}$ $\subset$ $\{$irreducible groups$\}$. 
\end{center}
However, none of the inverse inclusions hold: For the first inclusion, the infinite cyclic group is a trivial example of a group presentable by products, but not IIPP. For the second inclusion, the $5$-dimensional Heisenberg group is an example of an IIPP irreducible group; cf. Example \ref{ex:Heisenberg}.
\end{rem}

The strong feature of the property ``not IIPP" is that it detects at once all the possible dimensions of the factors of a product that dominates (with a $\pi_1$-surjective map) a rationally essential manifold with torsion-free fundamental group. For aspherical manifolds one of the factors can be taken as simple as possible:

\medskip
{\it Up to finite covers, if an aspherical manifold $M$ with fundamental group not IIPP is dominated by a product $X_1\times X_2$, then $X_1\times T^{\dim X_2}\geq M$, where $\dim X_2\leq\mathrm{rank} C(\pi_1(M))$.}
\medskip

Our first main non-domination result deals with circle bundles: 

\begin{thmx}\label{thmB}
Let $M$ be a circle bundle over a closed oriented aspherical manifold $B$, so that $\pi_1(M)$ is not IIPP and its center remains infinite cyclic in finite covers. Then $M$ is not dominated by any non-trivial direct product of closed oriented manifolds. 
\end{thmx}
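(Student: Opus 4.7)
The plan is to assume for contradiction that $f\colon X_1\times X_2\to M$ is a map of non-zero degree from a non-trivial direct product and to extract a contradiction using the circle-bundle projection $p\colon M\to B$.

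Step 1 (algebraic reduction). Since $M$ is aspherical, and hence rationally essential, the Kotschick--L\"oh analysis behind Theorem~\ref{t:KotschickLoehmain} yields, after passing to finite covers, subgroups $\Gamma_i:=\mathrm{im}(\pi_1(f|_{X_i}))\subset G:=\pi_1(M)$ that commute and together generate a finite-index subgroup of $G$. The ``not IIPP'' hypothesis then forces one of them, say $\Gamma_1$, to have finite index in $G$, so that $\Gamma_2\subset C_G(\Gamma_1)$. Since $C_G(\Gamma_1)\cap\Gamma_1=Z(\Gamma_1)\cong\Z$ by the hypothesis on centres, and $C_G(\Gamma_1)/Z(\Gamma_1)$ embeds in $G/\Gamma_1$, the centraliser $C_G(\Gamma_1)$ is virtually cyclic. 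Because $Z(G)\cong\Z$ sits inside it with finite index, one further cover of $X_2$ places $\Gamma_2$ inside $Z(G)$, which is precisely the fibre subgroup of the bundle $p$.

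Step 2 (reduction to $X_2=S^1$). Let $\bar f=p\circ f$. Since $\bar f_*$ annihilates $\pi_1(X_2)$ (its image lies in $\ker p_*=Z(G)$) and $B$ is aspherical, $\bar f$ is homotopic to $f_1\circ\mathrm{pr}_{X_1}$ for some $f_1\colon X_1\to B$. Using that $p$ is a fibration I would deform $f$ within its homotopy class so this factorisation holds on the nose. Set $N:=f_1^*M$ and let $\tilde f_1\colon N\to M$ be the canonical bundle map covering $f_1$; the pullback universal property then produces $\tilde f\colon X_1\times X_2\to N$ with $q_N\circ\tilde f=\mathrm{pr}_{X_1}$ and $f=\tilde f_1\circ\tilde f$. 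Because $\dim N=\dim X_1+1=n-\dim X_2+1$, the factorisation $f^*=\tilde f^*\circ\tilde f_1^*$ in top cohomology forces $\dim X_2\leq 1$ whenever $\deg(f)\neq 0$; by non-triviality, $X_2=S^1$ and $\dim N=n$.

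Step 3 (Euler class is torsion). With $\dim N=n$, both $\tilde f$ and $\tilde f_1$ are maps between closed oriented $n$-manifolds. The map $\tilde f$ covers $\mathrm{pr}_{X_1}$, so it is equivalent to a section of the pulled-back circle bundle $\mathrm{pr}_{X_1}^*N$; hence $\mathrm{pr}_{X_1}^*(e(N))=0$ in $H^2(X_1\times S^1;\Z)$, and injectivity of $\mathrm{pr}_{X_1}^*$ on $H^2$ gives $f_1^*e=0$, where $e\in H^2(B;\Z)$ is the Euler class of $M\to B$. On the degree side, $\tilde f_1$ is identity on fibres, so $\deg(\tilde f_1)=\deg(f_1)$, and $\tilde f$ has degree equal to its fibre-degree $m$; thus $\deg(f)=m\cdot\deg(f_1)\neq 0$, and in particular $\deg(f_1)\neq 0$. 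The umkehr/transfer map then makes $f_1^*\colon H^*(B;\Q)\to H^*(X_1;\Q)$ injective, and $f_1^*e=0$ forces $e=0$ in $H^2(B;\Q)$, i.e.\ $e$ is torsion.

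Step 4 (contradiction with ``not IIPP''). If $ke=0$, the Bockstein $\beta\colon H^1(B;\Z/k)\to H^2(B;\Z)$ presents $e$ as $\beta(\alpha)$ for some character $\alpha\colon\pi_1(B)\to\Z/k$; on the finite cover $\tilde B\to B$ corresponding to $\ker\alpha$ both $\alpha$ and its Bockstein $e$ pull back to zero. The pullback circle bundle $\tilde M\to\tilde B$ is therefore trivial, so $\tilde M\cong\tilde B\times S^1$ is a finite cover of $M$. This exhibits $\pi_1(\tilde B)\times\Z$ as a finite-index direct-product subgroup of $\pi_1(M)$ with both factors infinite, showing that $\pi_1(M)$ is IIPP and contradicting the hypothesis. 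The principal obstacle is organising the bundle pullback carefully enough to secure the dimension bound $\dim X_2\leq 1$; once this is in place the remaining Euler-class and Bockstein algebra is routine.
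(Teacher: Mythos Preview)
Your argument is correct and follows the same overall architecture as the paper: pass to finite covers to make $f$ $\pi_1$-surjective, use the not-IIPP hypothesis to force $\Gamma_2$ into the centre, factor $p\circ f$ through the projection $X_1\times X_2\to X_1$, and extract a contradiction from the pullback bundle $f_1^*M$. The execution, however, differs in two noteworthy places.

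First, your reduction to $X_2=S^1$ is done geometrically, by observing that $f$ factors through the closed manifold $N=f_1^*M$ of dimension $\dim X_1+1$, so $H_n(N)=0$ unless $\dim X_2\le 1$. The paper instead uses the algebraic Lemma~\ref{l:notIIPPalgebraic} (Schur's theorem) to show $\Gamma_2\cong\Z$, and then the non-vanishing of the Kotschick--L\"oh class $\alpha_2\in H_{\dim X_2}(B\Gamma_2;\Q)=H_{\dim X_2}(S^1;\Q)$ to force $\dim X_2=1$. Your pullback-dimension argument is arguably more direct.

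Second, the logical order at the end is reversed. The paper's Factorisation Lemma starts from ``$e$ not torsion'' (deduced from Lemma~\ref{l:propertiescirclebundles}(2) and not-IIPP), uses the Gysin sequence to see that $H^{n-1}(\pi)=0$, and hence that $\deg(f_1)=0$, contradicting $\deg f\neq 0$. You instead first show $\deg(f_1)\neq 0$ from the factorisation, then observe that the section of $\mathrm{pr}_{X_1}^*N$ kills $f_1^*e$ integrally, and conclude by injectivity of $f_1^*$ on $H^2(-;\Q)$ that $e$ is torsion; the Bockstein (equivalently, the paper's Lemma~\ref{l:propertiescirclebundles}(2)) then yields a virtual product contradicting not-IIPP. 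These are contrapositive formulations of the same phenomenon; your route avoids the Gysin computation, while the paper's packaging isolates the reusable Factorisation Lemma.

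One small cosmetic point: your assertion $\ker p_*=Z(G)$ is not literally immediate (a priori the fibre class is only a finite-index subgroup of $Z(G)\cong\Z$), but it follows at once since $p_*(Z(G))\subset Z(\pi_1(B))$ would be a finite subgroup of the torsion-free group $\pi_1(B)$, hence trivial.
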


\begin{ex}\label{ex:nilpotentnotIIPPexample}
 Closed $Nil^4$-manifolds %i.e. nilpotent geometric closed $4$-manifolds that are not virtual products,
 fulfill the conditions of Theorem \ref{thmB} and therefore are never dominated by products; cf. Propositions
\ref{p:nil4-mfds} and \ref{p:nil4-mfdsnotIIPP}. Examples of such manifolds can easily be constructed as mapping tori of suitable self-homeomorphisms of
$T^3$; see Remark \ref{r:nil4mappingtorus}.
\end{ex}

The fundamental group of $M$ in Theorem \ref{thmB} is presentable by products having infinite (cyclic) center. By Eberlein's works~\cite{Eberlein1,Eberlein2} or, more generally, by the stirring work of Farb and Weinberger~\cite{FarbWeinb}, irreducible 
manifolds of dimension higher than one with fundamental groups with center do not admit metrics of non-positive sectional curvature. The non-domination results of Kotschick and L\"oh~\cite{KotschickLoeh1} deal mostly with non-positively curved manifolds. More precisely, they show that non-positively curved manifolds are dominated by products if and only if they are virtually diffeomorphic to products. Equivalently, the fundamental groups of those manifolds 
are reducible if and only if they are presentable by products. In the case of fiber bundles (which includes manifolds that do not admit 
metrics of non-positive sectional curvature~\cite{KapLeeb}), the results of~\cite{KotschickLoeh1} deal with targets whose fiber and base have fundamental groups not presentable by products and with targets with positive simplicial volume. 

A circle bundle $M$ over an aspherical manifold has vanishing simplicial volume~\cite{Gromov:bounded,Lueck:book}. Moreover, the infinite cyclic fundamental group of the $S^1$-fiber is presentable by products and, as mentioned above, central in $\pi_1(M)$, which means that $\pi_1(M)$ is presentable by products. 
However, as noted in Remark \ref{r:nonequivalentconditions}, the property ``presentable by products" is not generally equivalent to ``reducible", and this applies as well to fundamental groups of many circle bundles; e.g. the fundamental group of any circle bundle over a closed surface with non-zero rational Euler class is irreducible and presentable by products (but it is not IIPP; see Proposition \ref{p:3-mfdsnotIIPP}).  
In fact, when the base of the circle bundle has fundamental group not presentable by products, then $\pi_1(M)$ is reducible if and only if it is IIPP. The following result characterizes circle bundles with the latter property, and is a partial converse %(which is not true in general, cf. Example \ref{ex:Heisenberg}) 
to Theorem \ref{thmB}:

\begin{thmx}\label{thmC}
 Let $M \stackrel{\pi}\longrightarrow B$ be a circle bundle over a closed aspherical manifold $B$ whose fundamental group $\pi_1(B)$ is not presentable by
products. Then the following are equivalent:
 \begin{itemize}
 \item[(1)] $M$ is dominated by a non-trivial product of closed oriented manifolds;
 \item[(2)] $M$ is finitely covered by a product $S^1 \times B'$, for some finite cover $B' \longrightarrow B$;
 \item[(3)] $\pi_1(M)$ is reducible;
 \item[(4)] $\pi_1(M)$ is IIPP.
 \end{itemize}
\end{thmx}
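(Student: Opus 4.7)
Throughout, I may assume that the circle bundle $M\to B$ is oriented (otherwise pass to its $2$-fold orientation cover, which preserves all four conditions), so that the fiber subgroup $\Z\subset\pi_1(M)$ is central. The easy implications $(2)\Rightarrow(1)$, $(2)\Rightarrow(3)$ and $(3)\Rightarrow(4)$ are then formal: the covering map $S^1\times B'\to M$ has non-zero degree; the corresponding finite-index subgroup $\Z\times\pi_1(B')\subset\pi_1(M)$ exhibits $\pi_1(M)$ as virtually a product of two infinite groups, i.e.\ reducible; and if $A\times B\subset\pi_1(M)$ is such a finite-index decomposition with $A,B$ infinite, then each factor has infinite index in $A\times B$, hence in $\pi_1(M)$, which is exactly IIPP via the inclusion.

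For $(1)\Rightarrow(4)$ my plan is to invoke the contrapositive of Theorem~\ref{thmB}. The key point is to check that $Z(\pi_1(M'))\cong\Z$ for every finite cover $M'$ of $M$. Since $\pi_1(B)$ is torsion-free and not presentable by products, any infinite-order $z\in Z(\pi_1(B))$ would yield a presentation $\langle z\rangle\times\pi_1(B)\to\pi_1(B)$, $(z^k,g)\mapsto z^k g$, with both factor-images infinite, contradicting the hypothesis; hence $Z(\pi_1(B))=1$. The property ``not presentable by products'' descends to finite-index subgroups (compose any such presentation of the subgroup with the inclusion to obtain a presentation of the ambient group), so every finite-index subgroup of $\pi_1(B)$ also has trivial center. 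Applying this to $p(\pi_1(M'))\subset\pi_1(B)$ and using the central extension $1\to\Z\to\pi_1(M')\to p(\pi_1(M'))\to 1$, any $w\in Z(\pi_1(M'))$ must project to $1$, so $Z(\pi_1(M'))=\pi_1(M')\cap\Z\cong\Z$. Theorem~\ref{thmB} then yields that domination by a product forces $\pi_1(M)$ to be IIPP.

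The main content is $(4)\Rightarrow(2)$. Let $\varphi\colon\Gamma_1\times\Gamma_2\to\pi_1(M)$ be a presentation witnessing IIPP and let $p\colon\pi_1(M)\to\pi_1(B)$ be the projection. Then $p\circ\varphi$ is a presentation by products of a finite-index subgroup of $\pi_1(B)$, so by hypothesis one factor, say $\Gamma_2$, has finite image in $\pi_1(B)$. Replacing $\Gamma_2$ by the kernel of this finite quotient, and $M$ by the finite cover $M_0$ corresponding to the image of the modified $\varphi$, I may assume $\varphi$ is surjective onto $\pi_1(M_0)$ and $\varphi(\Gamma_2)\subset\Z$. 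Writing $\varphi(\Gamma_1)\cap\Z=n\Z$ and $\varphi(\Gamma_2)=m\Z$, centrality of $\varphi(\Gamma_2)$ makes $\varphi(\Gamma_1)$ normal in $\pi_1(M_0)$, and
\[
\pi_1(M_0)/\varphi(\Gamma_1) \;\cong\; \varphi(\Gamma_2)/\bigl(\varphi(\Gamma_1)\cap\varphi(\Gamma_2)\bigr) \;=\; m\Z/\mathrm{lcm}(m,n)\Z,
\]
which is finite whenever $n\neq 0$. Since IIPP forces $\varphi(\Gamma_1)$ to have infinite index, one must have $n=0$. Thus $\varphi(\Gamma_1)$ meets $\Z$ trivially and so maps isomorphically onto $\pi_1(B_0):=p(\pi_1(M_0))$, splitting the central extension $1\to\Z\to\pi_1(M_0)\to\pi_1(B_0)\to 1$, whence $\pi_1(M_0)\cong\pi_1(B_0)\times\Z$. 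Since oriented circle bundles over aspherical $B_0$ are classified up to isomorphism by the Euler class in $H^2(\pi_1(B_0);\Z)$, which vanishes precisely when the extension splits, we conclude $M_0\cong S^1\times B_0$, proving (2). The principal obstacle I foresee is the careful bookkeeping of the successive reductions -- the orientation cover, the finite-index subgroup of $\Gamma_2$, and the cover $M_0$ -- to ensure that both the IIPP data and the circle-bundle structure descend consistently, so that the eventual group-theoretic product decomposition of $\pi_1(M_0)$ translates into a genuine virtual product diffeomorphism of the original $M$.
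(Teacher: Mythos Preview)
Your proof is correct, and the logical flow is sound, though it differs in organization from the paper. The paper proves $(1)\Rightarrow(2)$ directly: it first reduces to $X_1=S^1$ by showing that one of the images $\Gamma_i$ lies in $C(\pi_1(M))=\Z$ (exactly as you do), and then applies the Factorization Lemma (Lemma~\ref{l:KotschickNeofytidisgeneralization}) to rule out $S^1\times X_2\geq M$ unless $M$ is virtually trivial. For the equivalence $(3)\Leftrightarrow(4)$ the paper invokes the general Theorem~\ref{thmD}, and $(3)\Rightarrow(2)$ is handled by a separate short argument.

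You instead route $(1)\Rightarrow(4)$ through Theorem~\ref{thmB} as a black box --- which is legitimate, since the Factorization Lemma is already packaged inside it and your verification that $C(\pi_1(M'))\cong\Z$ for all finite covers (equivalently, Lemma~\ref{l:propertiescirclebundles}(1)) supplies the missing hypothesis. Your direct argument for $(4)\Rightarrow(2)$ is the genuinely different ingredient: rather than appealing to Theorem~\ref{thmD}, you exploit the fact that the center is infinite cyclic to force $\varphi(\Gamma_1)\cap\Z=0$ from the infinite-index condition, yielding the splitting immediately. This is more elementary and more transparent than the general Theorem~\ref{thmD}, at the cost of being special to the rank-one center situation. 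The bookkeeping you worry about (passing to $\Gamma_2'$ of finite index, then to $M_0$) is fine: since $\varphi(\Gamma_1)$ and $\varphi(\Gamma_2)$ commute element-wise, $\varphi(\Gamma_1)\varphi(\Gamma_2')$ still has finite index, and $[\pi_1(M_0):\varphi(\Gamma_1)]=\infty$ follows from $[\pi_1(M):\varphi(\Gamma_1)]=\infty$ and $[\pi_1(M):\pi_1(M_0)]<\infty$.
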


\begin{ex}
A closed $4$-manifold $M$ carrying the geometry $Sol_1^4$ is virtually a circle bundle over a
closed oriented $Sol^3$-manifold, 
and $\pi_1(M)$ is not IIPP (see Propositions \ref{p:sol14-mfds} and \ref{p:sol14-mfdsIIPP} respectively). Since $Sol^3$-manifold groups are not presentable by products (cf. Proposition \ref{p:3-mfdsPPSeifert}), Theorem \ref{thmC} implies that $M$ is not dominated by products. Actually, a circle bundle over a closed oriented $Sol^3$-manifold is dominated by a product if and only if it possesses the geometry $Sol^3 \times \R$ (see
also Theorem \ref{t:hillmanproducts}).
\end{ex}

The idea of Theorem \ref{thmC} is that 
the center of $\pi_1(B)$ remains trivial in finite covers, being not presentable by products and torsion-free. In fact, the equivalence between $(3)$ and $(4)$ holds in its greatest generality with no assumptions on finite generation, torsion freeness or the virtual rank of the center:

\begin{thmx}\label{thmD}
Let $\Gamma$ be a group with center $C(\Gamma)$ such that the quotient $\Gamma/C(\Gamma)$ is not presentable by products. Then, $\Gamma$ is reducible if and only if it is IIPP.
\end{thmx}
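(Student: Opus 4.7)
The easy direction, reducible $\Rightarrow$ IIPP, is immediate: if $A\times B\leq\Gamma$ is a finite-index subgroup with $A$ and $B$ both infinite, then the natural inclusion $A\times B\hookrightarrow\Gamma$ is an IIPP presentation, since the image of either factor has infinite index in $\Gamma$ (the other factor contributes infinitely many cosets modulo the finite-index subgroup).

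For the converse, the plan is to start with an IIPP presentation $\varphi\colon \Gamma_1\times\Gamma_2\to\Gamma$ onto a finite-index subgroup $\Gamma'$, with $H_i:=\varphi(\Gamma_i)$ of infinite index in $\Gamma$ for $i=1,2$. Composing with the projection $\Gamma\to\Gamma/C(\Gamma)$ yields $\bar\varphi\colon \Gamma_1\times\Gamma_2\to\Gamma/C(\Gamma)$ whose image has finite index. Since $\Gamma/C(\Gamma)$ is not presentable by products, one of the images $\bar H_i=H_iC(\Gamma)/C(\Gamma)$ must be finite; say $\bar H_1$, whence $H_1\cap C(\Gamma)$ has finite index in $H_1$. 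Replacing $\Gamma_1$ by the finite-index subgroup $\varphi_1^{-1}(C(\Gamma))$, I may assume $H_1\subseteq C(\Gamma)$. Then $H_1$ is central in $\Gamma$ and commutes element-wise with $H_2$, so $\Gamma'=H_1H_2$ is a central product, and the multiplication $H_1\times H_2\to\Gamma'$ is a surjective homomorphism whose kernel is the antidiagonal of $E:=H_1\cap H_2\subseteq C(\Gamma)$. The IIPP hypothesis forces both $[H_1:E]$ and $[H_2:E]$ to be infinite, since otherwise one of the $H_i$ would have finite index in $\Gamma'$ and hence in $\Gamma$.

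The remaining task is to promote this central product to a genuine direct product after passing to a further finite-index subgroup. The clean case is $E=\{e\}$: then $\Gamma'=H_1\times H_2$ is already a finite-index direct product of two infinite groups, so $\Gamma$ is reducible. When $E$ is nontrivial, I would first enlarge $H_1$ to coincide with the maximal central part $C(\Gamma)\cap\Gamma'$ by modifying the presentation to $\tilde\varphi\colon (\Gamma_1\times(C(\Gamma)\cap\Gamma'))\times\Gamma_2\to\Gamma$, $((g_1,c),g_2)\mapsto \varphi(g_1,g_2)c$, which still surjects onto $\Gamma'$ and remains an IIPP presentation (unless $\Gamma$ is virtually abelian, a case that can be treated directly via the structure of abelian groups). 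After this enlargement $E=H_2\cap C(\Gamma)$, and the heart of the argument reduces to producing a subgroup $A\leq H_1$ with $A\cap E=\{e\}$ and $AE$ of finite index in $H_1$; for such an $A$, the product $A\cdot H_2=A\times H_2$ is a finite-index direct product of two infinite groups, witnessing that $\Gamma$ is reducible. The hardest part will be the construction of $A$, because in a general abelian group a subgroup need not admit even a virtual complement (for instance $2\Z\subset\Z$), and the argument will have to combine the finite-index rigidity imposed by IIPP with the hypothesis that $\Gamma/C(\Gamma)$ is not presentable by products to exclude the pathological abelian configurations.
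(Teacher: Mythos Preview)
Your setup parallels the paper's proof almost exactly (with the labels of the two factors swapped): both project to $\Gamma/C(\Gamma)$, use the hypothesis to force one factor to be virtually central, and arrive at a finite-index subgroup that is a central product of the large factor with an abelian piece. The paper then concludes by asserting directly that $\Gamma_1C(\Gamma_2)\cong\Gamma_1\times\bigl(C(\Gamma_2)/(\Gamma_1\cap C(\Gamma_2))\bigr)$, noting that the quotient is abelian of positive rank; in your notation this is precisely the claim that $E$ admits a complement $A$ inside the abelian group $H_1$, so the two arguments converge on the same crux. (Your enlargement of $H_1$ to $C(\Gamma)\cap\Gamma'$ and the virtually-abelian aside are harmless but unnecessary: under the hypothesis $\Gamma/C(\Gamma)$ is infinite and not presentable by products, hence not virtually abelian, and therefore neither is $\Gamma$.)

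The gap you flag is genuine: you have not constructed $A$, so the argument is incomplete as written. Your $2\Z\subset\Z$ example is not itself an obstruction, since you have already established $[H_1:E]=\infty$; the honest worry would be non-finitely-generated $H_1$ (e.g.\ $\Z\subset\Q$, where no virtual complement exists). For finitely generated abelian $H_1$ --- the situation underlying all of the paper's applications, and guaranteed by Lemma~\ref{l:finitelygeneratedcenter} once $\Gamma$ and $C(\Gamma)$ are finitely generated --- the virtual complement is immediate from the structure theorem: pass to the torsion-free part of $H_1$, choose a $\Z$-basis adapted to $E$ via Smith normal form, and let $A$ be the span of the complementary basis vectors. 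That is the missing ingredient; the paper's final ``Thus'' is most naturally read as invoking exactly this.
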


In the light of Theorem \ref{thmA} we obtain the following consequence of Theorem \ref{thmD}:

\begin{cor}\label{c:simplicial}
Let $M$ be a closed oriented aspherical manifold such that $\pi_1(M)$ is IIPP and $\pi_1(M)/C(\pi_1(M))$ is not presentable by products. Then there exists a closed oriented manifold $N$ such that $T^k\times N\geq M$, where $T^k$ is the $k$-dimensional torus with $k<\mathrm{rank} C(\pi_1(M))$. In particular, $M$ has zero simplicial volume.
\end{cor}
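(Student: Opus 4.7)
The plan is to derive the corollary by chaining Theorem \ref{thmD} with Theorem \ref{thmA}, and then invoking Gromov's product estimate for the simplicial volume.

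First, I would apply Theorem \ref{thmD} to $\Gamma := \pi_1(M)$. The corollary's two hypotheses --- that $\Gamma$ is IIPP and that $\Gamma/C(\Gamma)$ is not presentable by products --- are exactly those of Theorem \ref{thmD}, so we conclude that $\Gamma$ is reducible. In particular, some finite index subgroup $H \leq \Gamma$ splits as $H = \Gamma_1 \times \Gamma_2$ with both factors infinite, and passing to the corresponding finite covering $\bar M \to M$ produces a closed oriented aspherical manifold with $\pi_1(\bar M) = H$ reducible. Theorem \ref{thmA} applied to $\bar M$ then supplies a non-trivial direct product $X_1 \times X_2$ of closed oriented manifolds dominating $\bar M$, and composing with the (finite, hence non-zero degree) covering map gives $X_1 \times X_2 \geq M$.

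For the specific torus factor, I would refine the choice of the splitting of $H$, passing if necessary to a further finite-index subgroup, so that $\Gamma_1 \cong \Z^k$ absorbs a primitive $\Z^k$ sitting inside the abelian group $C(\Gamma) \cap H = C(\Gamma_1) \times C(\Gamma_2)$. The requirement $k < \mathrm{rank}\, C(\Gamma)$ is exactly what ensures that the complementary factor $\Gamma_2$ remains infinite. With such a splitting, $B\Gamma_1 = T^k$ is already a closed manifold, and the Thom-type realization that powers Theorem \ref{thmA} produces a closed oriented manifold $N$ standing in for the $\Gamma_2$-factor, so that $T^k \times N \geq \bar M \geq M$. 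The vanishing $\|M\| = 0$ is then immediate: the simplicial volume of a non-trivial direct product of closed oriented manifolds vanishes by Gromov's product estimate, and the simplicial volume is monotone under maps of non-zero degree.

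The main obstacle will be arranging the splitting of $H$ so that one factor is a free abelian group of the specified rank $k$. This amounts to splitting off a rank-$k$ direct summand of the center inside a finite-index subgroup of $H$, which is a routine maneuver when the center is finitely generated abelian but would need justification in the full generality admitted by Theorem \ref{thmD}. The first part of the corollary (the existence of a dominating product without the torus refinement) is already established simply by Theorem \ref{thmD} followed by Theorem \ref{thmA}.
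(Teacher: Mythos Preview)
Your overall strategy---Theorem~\ref{thmD} followed by Theorem~\ref{thmA}---is exactly what the paper does. However, two points need correction.

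First, the torus factor. You propose to obtain it by splitting off a primitive $\Z^k$ from the center of a finite-index subgroup, and you correctly flag this as the main obstacle. In fact this maneuver can fail: having $\Z^k$ central does not mean it splits off as a direct factor (the Heisenberg group is the standard obstruction). The paper avoids this difficulty because the \emph{proof} of Theorem~\ref{thmD}, not merely its statement, already produces a finite-index subgroup of the form $\Gamma_1\times A$ with $A$ abelian of positive rank (see the last paragraph of that proof and the remark following Corollary~\ref{c:simplicial}). In the aspherical setting $A$ is torsion-free, hence $A\cong\Z^k$, so $B\Gamma_1\times T^k$ is homotopy equivalent to a finite cover of $M$, and one runs the Thom-realization step of Theorem~\ref{thmA} only on the $\Gamma_1$-factor. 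You should invoke the proof of Theorem~\ref{thmD} rather than try to manufacture the torus factor by hand.

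Second, your simplicial volume step contains a genuine error. You write that ``the simplicial volume of a non-trivial direct product of closed oriented manifolds vanishes by Gromov's product estimate.'' This is false: a product of two closed hyperbolic surfaces has positive simplicial volume. Gromov's product inequality gives $\|X\times Y\|\leq c\,\|X\|\cdot\|Y\|$, which does not force vanishing. What makes $\|T^k\times N\|=0$ is that $\|T^k\|=0$ (or, equivalently, that $T^k\times N$ admits self-maps of arbitrary degree via multiplication on the circle factor); then monotonicity under non-zero degree maps gives $\|M\|=0$. Fix the justification accordingly.
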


\begin{rem}
It is a long-standing conjecture that the simplicial volume of a closed aspherical manifold whose fundamental group contains a non-trivial amenable normal subgroup vanishes~\cite{Lueck:book}. The vanishing result in Corollary \ref{c:simplicial} is a straightforward consequence of the domination $T^k\times N\geq M$ (applying Theorems \ref{thmD} and \ref{thmA}). Alternatively, after showing that $\pi_1(M)$ is a virtual product with an Abelian factor (by Theorem \ref{thmD}), the vanishing of the simplicial volume of $M$ follows as well by Gromov's isometry theorem with respect to the simplicial $\ell^1$-norm~\cite{Gromov:bounded}. 
\end{rem}

The property IIPP truly recognizes reducibility of a group $\Gamma$ whenever the quotient $\Gamma/C(\Gamma)$ is not presentable by products. Indeed, IIPP is not anymore a criterion for a group $\Gamma$ to be reducible when the quotient $\Gamma/C(\Gamma)$ is presentable by products:

\begin{ex}\label{ex:Heisenberg}
The $5$-dimensional Heisenberg group $H_5$ is {\em irreducible and IIPP}, with $H_5/C(H_5)$ isomorphic to $\Z^4$; cf. Lemma \ref{l:H5}. Also, $H_5$ is realizable by a non-trivial circle bundle over $T^4$. In Theorem \ref{t:H5}, we will prove that this manifold is not dominated by products. 
This seems to be the first example of an aspherical manifold whose fundamental group admits a presentation by a product of two non-trivial subgroups of infinite index, but this manifold is still not dominated by products. Previously, examples of rationally essential, but not aspherical, manifolds with reducible fundamental groups that are not dominated by products were given in~\cite{KotschickLoeh1}. 

Also, this example shows that the converse of Theorem \ref{thmB} does not hold in general. In particular, the assumptions of Theorem \ref{thmC} (which is a partial converse of Theorem \ref{thmB}) cannot be removed.
\end{ex}

The Heisenberg group $H_5$ has cohomological dimension five which allows enough space for the existence of two infinite commuting subgroups $\Gamma_i \subset H_5$ such that $H_5$ is presented by $\Gamma_1 \times \Gamma_2$ and $[H_5 : \Gamma_i]=\infty$. This is not true in lower dimensions as we shall see in Theorem \ref{thmE} below. 

\subsection{Examples and applications}

We now characterize the fundamental groups of certain aspherical manifolds, giving non-trivial examples of groups presentable by products, but not IIPP. Then, we apply our main results to show that an aspherical $4$-manifold possessing a Thurston geometry is dominated by a direct product if and only if it is virtually a direct product. 

\subsubsection{Non-trivial examples of groups not IIPP}

The condition ``not IIPP" is the crucial property for the non-existence results of this paper. The following result gathers together some non-elementary examples of groups not IIPP.

\begin{thmx}\label{thmE}
Irreducible fundamental groups of aspherical manifolds that possess a Thurston solvable geometry in dimensions $\leq 4$ are not IIPP.
\end{thmx}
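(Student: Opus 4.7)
My plan is to prove Theorem~\ref{thmE} by a case-by-case analysis of the irreducible lattices among the solvable Thurston geometries in dimensions at most four. In dimension 1 the only case is $\Z$, for which not IIPP is immediate. In dimension 3 the irreducible cases are lattices in $\Nil^3$ and $\Sol^3$, and in dimension 4 they are lattices in $\Nil^4$, $\Sol^4_{m,n}$ with $m\neq n$, $\Sol^4_0$, and $\Sol^4_1$. The remaining solvable geometries ($\mathbb{E}^k$, $\Nil^3\times\R$, $\Sol^3\times\R$) have only reducible lattices and can be discarded. I will separate the argument according to whether the center of $\Gamma$ is trivial or infinite cyclic.

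For the centerless geometries $\Sol^3$, $\Sol^4_{m,n}$ with $m\neq n$, $\Sol^4_0$, and $\Sol^4_1$, the ambient Lie groups have trivial center, and so every finite-index subgroup of a lattice therein is centerless. Supposing, for contradiction, that $\Gamma$ is IIPP via $\varphi\colon\Gamma_1\times\Gamma_2\twoheadrightarrow\Gamma'$ with $A_i:=\varphi(\Gamma_i)$ of infinite index in $\Gamma$, a short check using the index identity $[A_1A_2:A_i]=[A_{3-i}:A_1\cap A_2]$ shows both $A_i$ are infinite. Since $A_1, A_2$ commute and generate $\Gamma'$, the intersection $A_1\cap A_2$ lies in $Z(\Gamma')=1$, so $\Gamma'=A_1\times A_2$ is an internal direct product of two infinite groups, contradicting the irreducibility of $\Gamma$. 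Equivalently, this is the content of Theorem~\ref{thmD} once one notes that $\Gamma/C(\Gamma)=\Gamma$ is not presentable by products, which the argument above establishes.

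The nilpotent geometries $\Nil^3$ and $\Nil^4$ have lattices $\Gamma$ with $C(\Gamma)\cong\Z$ but with $\Gamma/C(\Gamma)$ itself presentable by products, so Theorem~\ref{thmD} does not apply. For $\Nil^3$, I project an alleged IIPP datum to $\bar\Gamma=\Gamma/C(\Gamma)$, which is virtually $\Z^2$. The basic fact here is that two elements of the Heisenberg group commute exactly when their projections to $\Z^2$ are collinear, so the commuting images $\bar A_i$ lie in a common cyclic subgroup of $\Z^2$. Their product is then cyclic and cannot have finite index in $\Z^2$, so one of them, say $\bar A_1$, is trivial. Then $A_1\subseteq C(\Gamma)$ is cyclic while $A_2$ surjects onto $\bar\Gamma'$. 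Taking a commutator of two elements of $A_2$ whose projections generate a finite-index subgroup of $\Z^2$ produces a non-trivial element of $A_2\cap C(\Gamma)$, so $A_2\cap C(\Gamma)$ has finite index in $C(\Gamma)$, forcing $A_2$ to have finite index in $\Gamma$ and contradicting the IIPP hypothesis. The $\Nil^4$ case is handled by iterating: project to $\bar\Gamma=\Gamma/C(\Gamma)$, which is a $\Nil^3$-lattice, invoke the $\Nil^3$ conclusion just established to reduce to the subcase where $\bar A_1$ has finite index in $\bar\Gamma$, and then use a second-order commutator $[a,[a,b]]$ of suitable elements of $A_1$, which lands in $A_1\cap C(\Gamma)$ and is non-trivial by the class-three nilpotency of $\Nil^4$ (the defining bracket $[e_1,[e_1,e_2]]=e_4$ is non-vanishing in the center).

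The main obstacle I anticipate is the $\Nil^4$ case: since both $C(\Gamma)$ and $\Gamma/C(\Gamma)$ are presentable by products, Theorem~\ref{thmD} cannot be invoked directly, and one must instead exploit the class-three nilpotency of $\Nil^4$ through the iterated commutator computation sketched above to exhibit a non-trivial central element inside one of the factor images and force the desired contradiction.
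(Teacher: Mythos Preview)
Your treatment of $\Sol^4_1$ contains a genuine error: the Lie group $\Sol^4_1 = \Nil^3 \rtimes \R$ does \emph{not} have trivial center. The $\R$-action on $\Nil^3$ fixes the center of $\Nil^3$ pointwise (in the matrix description the $z$-entry is unchanged), so the center of $\Sol^4_1$ is a copy of $\R$, and lattices $\Gamma$ in $\Sol^4_1$ have $C(\Gamma)\cong\Z$. This is exactly what the paper establishes in Proposition~\ref{p:sol14-mfds}: a closed $\Sol^4_1$-manifold is virtually a circle bundle over a $\Sol^3$-manifold, with the circle fiber generating the center. Consequently, your ``centerless'' argument does not apply, and the case is not covered by the remainder of your proof. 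The paper handles $\Sol^4_1$ via Theorem~\ref{thmD}: the quotient $\Gamma/C(\Gamma)$ is a $\Sol^3$-lattice, which is not presentable by products (this is shown in the paper using the acentral-subgroup criterion of Kotschick--L\"oh), so IIPP would force reducibility, contradicting Wall's uniqueness of geometries. You could repair your proof by moving $\Sol^4_1$ to the ``infinite cyclic center'' column and invoking Theorem~\ref{thmD} after checking that $\Sol^3$-lattices are not presentable by products; note, however, that this last fact is not a consequence of your short centerless argument, since ``not presentable by products'' is strictly stronger than ``not IIPP'' in general.

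Where your argument is correct, it takes a genuinely different route from the paper's. For $\Sol^3$, $\Sol^4_{m\neq n}$, and $\Sol^4_0$ the paper proves the stronger statement that these groups are not presentable by products (via acentral extensions), while you argue directly and more economically that centerless plus irreducible implies not IIPP. For $\Nil^3$ and $\Nil^4$ the paper works with cohomological dimension (Bieri) and Hirsch length together with a rank-of-center lemma (Lemma~\ref{l:extensionscenter}), whereas you argue via the commutator structure: the collinearity criterion for commuting Heisenberg elements in the $\Nil^3$ case, and an iterated commutator $[a,[a,b]]$ exhibiting a nontrivial central element in the $\Nil^4$ case. Your approach is more hands-on and avoids external structural results, at the cost of computations that you have only sketched; the paper's approach generalizes more readily but imports more machinery.
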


The above statement contains solvable groups that are not presentable by products as well, namely $Sol^3$-, $Sol_0^4$- and $Sol_{m\neq n}^4$-manifold groups.
Theorem \ref{thmE} says, roughly, that in low dimensions the properties ``reducible" and ``IIPP" are actually equivalent. The requirement on the cohomological dimension being at most four is crucial in the above theorem, because as we have seen in Example \ref{ex:Heisenberg} the $5$-dimensional Heisenberg group $H_5$ is irreducible and IIPP. %As mentioned above, this group is realizable by a non-trivial circle bundle over $T^4$.

\subsubsection{Domination by products for geometric $4$-manifolds}

After~\cite{KotschickLoeh1}, it is natural to ask to what extent the condition ``presentable by products" on the fundamental group of a rationally essential manifold $M$ would be sufficient for $X_1\times X_2\geq M$. Theorem \ref{thmA} says that reducibility suffices for aspherical manifolds. A complete answer is known in dimension three~\cite{KotschickNeofytidis}, where Kotschick and the author proved that a closed $3$-manifold is dominated by products if and only if it is either a virtual product or a virtual connected sum of copies of $S^2\times S^1$. Thus, in particular, non-trivial circle bundles over closed oriented aspherical surfaces are never dominated by products, although their fundamental groups are presentable by products, having infinite center. It is well known~\cite{Thurstonbook,Scott:3-mfds} that a closed $3$-manifold possesses one of the geometries $Nil^3$ or $\widetilde{SL_2}$ if and only if it is virtually a non-trivial circle bundle over a closed aspherical surface (torus or a hyperbolic surface respectively). A main application of Theorems \ref{thmB} and \ref{thmC}, together with the examples of groups not IIPP given in Theorem \ref{thmE}, is the following characterization in dimension four:

\begin{thmx}\label{thmF}
 A closed oriented aspherical geometric $4$-manifold $M$ is dominated by a non-trivial product if and only if it is finitely covered by a product.
Equivalently, $M$ carries one of the product geometries $\mathbb{X}^3 \times \R$ or the reducible $\mathbb{H}^2 \times \mathbb{H}^2$ geometry.
\end{thmx}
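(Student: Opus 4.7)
The plan is to combine the non-domination results of this paper (especially Theorems \ref{thmB}, \ref{thmC} and \ref{thmE}) with the original obstruction of Kotschick and L\"oh~\cite{KotschickLoeh1} and Hillman's classification of aspherical geometric $4$-manifolds that are virtually products~\cite{Hillman}.

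\textbf{Easy direction.} If $M$ carries a product geometry $\mathbb{X}^3\times\R$ or the reducible $\mathbb{H}^2\times\mathbb{H}^2$ geometry, then by Hillman's classification $M$ is finitely covered by a non-trivial direct product of closed manifolds, so $M$ is dominated by this product.

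\textbf{Hard direction.} Conversely, suppose $M$ is dominated by a non-trivial direct product of closed oriented manifolds. I would case-split on the aspherical Thurston $4$-geometries and rule out each non-product, non-reducible-$\mathbb{H}^2\times\mathbb{H}^2$ possibility. For the ``large'' geometries $\mathbb{H}^4$, $\mathbb{H}^2(\C)$, and the irreducible $\mathbb{H}^2\times\mathbb{H}^2$, the Kotschick--L\"oh obstruction~\cite{KotschickLoeh1} shows that $\pi_1(M)$ is not presentable by products (these are either negatively curved or irreducible locally symmetric of non-compact type); since $M$ is aspherical and hence rationally essential, it cannot be dominated by a product. For the solvable non-product geometries $Nil^4$, $Sol_0^4$, $Sol_{m\neq n}^4$, and $F^4$, Theorem \ref{thmE} yields that the (irreducible) $\pi_1(M)$ is not IIPP. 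I would then split on the centre: for $Sol_0^4$ and $Sol_{m\neq n}^4$, the comment following Theorem \ref{thmE} records that $\pi_1(M)$ is not presentable by products at all, so Kotschick--L\"oh finishes these cases. For $Nil^4$ and $F^4$, the $1$-dimensional centre of the model Lie group forces $\pi_1(M)$ to have an infinite cyclic centre that is characteristic and remains infinite cyclic in every finite cover, and the corresponding manifolds are virtually principal $S^1$-bundles over closed aspherical $3$-manifolds (an infranilmanifold in the $Nil^4$ case and a Seifert-type $\mathbb{H}^2\times\R$-manifold in the $F^4$ case). Hence Theorem \ref{thmB} applies and rules out domination by products. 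Combined with the easy direction, this gives the equivalence with the list of admissible geometries.

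\textbf{Main obstacle.} The technically substantive step is the verification, geometry by geometry for $Nil^4$ and $F^4$, that (i) $M$ is virtually a principal circle bundle over a closed aspherical $3$-manifold, and (ii) $C(\pi_1(M))$ is infinite cyclic and remains so in every finite index subgroup. This reduces to inspecting the centre of the underlying model Lie group and invoking Hillman's Seifert-type structure theorems. In the locally symmetric direction, one must also appeal to the appropriate Kotschick--L\"oh non-presentability statement (either negative curvature or irreducibility of the associated symmetric space). Once these inputs are in place, the case analysis closes and the remaining aspherical geometric $4$-manifolds dominated by products are exactly those covered by $\mathbb{X}^3\times\R$ or reducible $\mathbb{H}^2\times\mathbb{H}^2$, as claimed in Theorem \ref{thmF}.
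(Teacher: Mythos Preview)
Your overall strategy matches the paper's proof exactly: invoke Hillman for the easy direction, then do a geometry-by-geometry case split using Kotschick--L\"oh's obstruction for the geometries whose groups are not presentable by products, and Theorems~\ref{thmB}/\ref{thmC} together with Theorem~\ref{thmE} for the remaining solvable non-product cases.

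However, there is a concrete error in your enumeration of the solvable non-product geometries. The four aspherical solvable non-product $4$-geometries with compact representatives are $Nil^4$, $Sol^4_{m\neq n}$, $Sol^4_0$, and $Sol_1^4$ --- not $F^4$. The geometry $\mathbb{F}^4$ (the tangent bundle of the hyperbolic plane) is precisely the extra geometry in Filipkiewicz's list that admits \emph{no} compact quotient, so it plays no role here. Your list therefore omits $Sol_1^4$ entirely, and your description of the missing case is off: a closed $Sol_1^4$-manifold is (virtually) a circle bundle over a closed $Sol^3$-manifold, not over a Seifert-type $\mathbb{H}^2\times\R$-manifold (see Proposition~\ref{p:sol14-mfds}). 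Once you replace $F^4$ by $Sol_1^4$ and the base by a $Sol^3$-manifold, the argument closes: $\pi_1$ of the base is not presentable by products (Proposition~\ref{p:3-mfdsPPSeifert}), and $\pi_1(M)$ is not IIPP (Proposition~\ref{p:sol14-mfdsIIPP}), so Theorem~\ref{thmC} (or Theorem~\ref{thmB}, since the centre stays infinite cyclic in finite covers) rules out domination by products. With this correction, your proof coincides with the paper's.
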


The existence of finite coverings of (diffeomorphism) type $N\times S^1$ for manifolds modeled on product geometries $\mathbb{X}^3\times\R$ is due to Hillman~\cite{Hillman} (note that domination by products alone follows by Theorem \ref{thmA} as well). The above theorem says that only geometric aspherical $4$-manifolds that are virtual products admit maps of non-zero degree from direct products. 

\begin{rem}
We note that we could have included non-aspherical geometries as well in the above statement, however those geometries are not interesting for the domination-by-products question, either because they are products themselves or because their representatives are simply connected. The latter geometries were contained as trivial examples in~\cite{Neofytidis}, where we constructed maps from products to every simply connected $4$-manifold.
\end{rem}

\subsection*{Acknowledgements}
Most of the content of this paper stems by a large part of author's thesis, which was defended in Munich. The author is grateful to his advisor, Dieter Kotschick, for his wise guidance and support. Among several other people, the author is particularly grateful to Clara L\"oh and to Shicheng Wang for useful discussions and many suggestions; Shicheng Wang's previous numerous works on maps of non-zero degree have been a great source of inspiration and have strongly influenced this research project. The financial support of the {\em Deutscher Akademischer Austausch Dienst} (DAAD) is also gratefully acknowledged.

\section{Background and Preliminaries}

In this section, we give a short description of the background of the topic of this paper, explain the notation, 
and give some elementary properties of groups presentable by products.

\subsection{The domination relation}

In the early 1940s, Steenrod raised the question of whether every $n$-dimensional integral homology class can be realized as the image of the fundamental
class of a closed oriented $n$-dimensional manifold under a continuous map~\cite[Problem 25]{EilenbergSteenrod}. About a decade later, Thom answered
affirmatively
Steenrod's question in degrees up to six, 
and found a $7$-dimensional integral homology class which is not
realizable by a closed manifold. (Since then, other non-realizability results have been obtained.) Nevertheless, Thom proved that, in all degrees, some multiple of each integral
homology class can be realized by a closed smooth manifold  \cite{Thom}.

In this paper, we are interested in the realization of fundamental classes of closed oriented manifolds (especially by direct products of manifolds), and
therefore we deal with the notion of {\em the degree of a continuous map}. Namely, suppose that $f \colon M \longrightarrow N$ is a continuous map between
two closed oriented $n$-dimensional manifolds. The {\em degree} of $f$ is defined to be the integer $d$ so that $H_n(f;\Z)([M]) = d \cdot [N]$, where
$[M] \in H_n(M;\Z)$ and $[N] \in H_n(N;\Z)$ denote the fundamental classes of $M$ and $N$ respectively. 
Whenever $d$ is not zero, we say that $M$
{\em dominates} $N$ (or that $M$ {\em $d$-dominates} $N$), and write $M \geq N$ (or $M \geq_d N$). The degree of $f$ is denoted by $\deg(f)$. Unless
otherwise stated, in this paper we consider continuous maps between the homotopy types of closed oriented connected manifolds.

Gromov suggested to investigate the domination relation as defining an ordering of manifolds and as a tool to understand the values of
functorial semi-norms on homology, most notably the simplicial volume~\cite{Gromov:metric,Gromov:bounded,CarlsonToledo}. The simplicial volume of a closed manifold $M$ is completely determined by the classifying space of the fundamental group, $B\pi_1(M)$, because the classifying map of the universal covering, $c_M \colon M \longrightarrow B\pi_1(M)$, 
induces an isometry $H_*(c_M;\Q) \colon H_*(M;\Q) \longrightarrow H_*(B\pi_1(M);\Q)$ with respect to the simplicial $\ell^1$-norm~\cite{Gromov:bounded}. This gives rise to the following definition:

\begin{defn}[\cite{Gromov:bounded}]\label{d:ess}
 A closed oriented connected $n$-dimensional manifold $M$ is called {\em rationally essential} if $H_n(c_M;\Q)([M]) \neq 0 \in H_n(B\pi_1(M);\Q)$, where
$c_M \colon M \longrightarrow B\pi_1(M)$ classifies the universal covering of $M$. Otherwise, $M$ is called {\em rationally inessential}.
\end{defn}

Clearly, every closed aspherical manifold is rationally essential. In fact, the notion of essentialness expands widely the class of aspherical manifolds, because for example every connected sum containing a rationally essential summand is rationally essential itself, and every manifold with non-zero simplicial volume is rationally essential%by the aforementioned result of Gromov
~\cite{Gromov:bounded}.

\subsection{Kotschick-L\"oh's non-domination criterion}\label{s:notation}

Gromov conjectured that there might exist certain classes of (rationally essential) manifolds which are not dominated by products, pointing out irreducible locally symmetric spaces of non-compact type as potential candidates; cf.~\cite[Chapter 5G$_+$]{Gromov:metric}. Kotschick and L\"oh~\cite{KotschickLoeh1} verified Gromov's suggestion, by finding a condition on the fundamental groups of rationally essential manifolds that are dominated by products:

\begin{defn}[\cite{KotschickLoeh1}]\label{d:PP}
An infinite group $\Gamma$ is called {\em presentable by products} if there is a homomorphism $\varphi \colon \Gamma_1 \times \Gamma_2 \longrightarrow
\Gamma$ onto a finite index subgroup of $\Gamma$ so that the restriction of $\varphi$ to each factor $\Gamma_i$ has infinite image $\varphi(\Gamma_i)$. 
\end{defn}

The property of being (not) presentable by products is clearly preserved under passing to finite index subgroups.

\begin{ex}\label{ex:examplesPPtf} \
\begin{itemize}
 \item[\normalfont{(1)}] A {\em reducible group} is obviously presentable by products, being a virtual product of two infinite groups.
 \item[\normalfont{(2)}]  Let $\Gamma$ be a group which contains a finite index subgroup $\overline{\Gamma}$ with infinite center. Then $\Gamma$ is
presentable by products through the multiplication homomorphism $C(\overline{\Gamma}) \times \overline{\Gamma} \longrightarrow \overline{\Gamma}$.
\end{itemize}
These two examples include every torsion-free group presentable by products \cite[Prop. 3.2]{KotschickLoeh1}.
\end{ex}

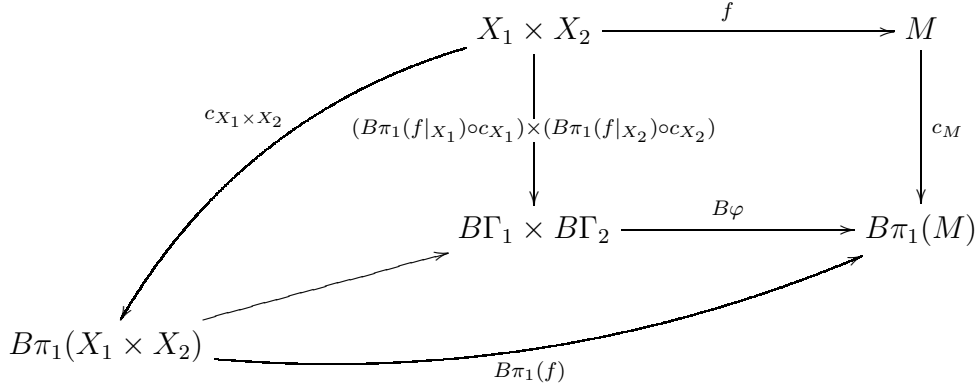
\begin{figure}
     \[
\xymatrix{
& & &X_1 \times X_2 \ar@/_2pc/[lllddd]_{c_{X_1 \times X_2}} \ar[dd]|-{(B\pi_1(f\vert_{X_1}) \circ c_{X_1}) \times (B\pi_1(f\vert_{X_2}) \circ
c_{X_2})} \ar[rrr]^{f} & & & M \ar[dd]^{c_M}\\
& & & & & & \\
& & &B\Gamma_1 \times B\Gamma_2 \ar[rrr]^{B\varphi} & & & B\pi_1(M)\\
B\pi_1(X_1 \times X_2) \ar[rrru]\ar@/_2pc/[rrrrrru]_{B\pi_1(f)}  & &}
     \]
\caption{\small{Domination by products on the level of classifying spaces.}}
\label{f:KotschickLoeh}
 \end{figure}

Suppose now that $M$ is a rationally essential $n$-dimensional manifold and let $f \colon X_1 \times X_2 \longrightarrow M$ be a map of non-zero degree,
where the $X_i$ are closed oriented manifolds of positive dimensions. Consider the induced map $\pi_1(f) \colon \pi_1(X_1) \times \pi_1(X_2)
\longrightarrow \pi_1(M)$ and set
\[
 \Gamma := \mathrm{im} (\pi_1(f)) \subset \pi_1(M) \ \textrm{ and } \ \Gamma_i := \mathrm{im}(\pi_1(f\vert_{X_i})) \subset \Gamma
\]
for the image of $\pi_1(f)$ and the images under $\pi_1(f)$ of the restrictions of $f$ to the two factors $X_i$ respectively. The multiplication map
$\varphi \colon
\Gamma_1 \times \Gamma_2 \longrightarrow \Gamma$ is then a well-defined surjective homomorphism, because the $\Gamma_i$ commute element-wise and $
\Gamma_1
\cup \Gamma_2$ generates $\Gamma$. Moreover, the outer commutative diagram in Figure \ref{f:KotschickLoeh} implies that $X_1 \times X_2$ is rationally
essential~\cite{KotschickLoeh1}.
%In particular, each $X_i$ is rationally essential.

Let $c_{X_i} \colon X_i \longrightarrow B\pi_1(X_i)$ be the classifying maps of the universal coverings of the $X_i$ and $B\pi_1(f\vert_{X_i}) \colon
B\pi_1(X_i) \longrightarrow B\Gamma_i$ be the maps induced by $\pi_1(f\vert_{X_i})$ on the level of classifying spaces. Also, let $B\varphi \colon
B\Gamma_1 \times B\Gamma_2 \longrightarrow B\Gamma$ be the map induced by $\varphi$; 
here we apply the homotopy
equivalence $B\Gamma_1 \times B\Gamma_2 \simeq B(\Gamma_1 \times \Gamma_2)$. We then have for $i = 1,2$ the
maps $ B\pi_1(f\vert_{X_i}) \circ c_{X_i} \colon X_i \longrightarrow B\Gamma_i$, and the corresponding rational homology classes
\begin{equation}
 \alpha_i := H_{\dim X_i}(B\pi_1(f\vert_{X_i})\circ c_{X_i})([X_i]) \in H_{\dim X_i}(B\Gamma_i;\Q),
\end{equation}
where $[X_i]$ denote the fundamental classes of the factors $X_i$.
According to this notation, the key observation of Kotschick and L\"oh~\cite{KotschickLoeh1}, shown in the commutative rectangle of Figure \ref{f:KotschickLoeh}, is 
\begin{eqnarray*}
 0 \neq \deg(f) \cdot H_n(c_M)([M]) = H_n(B\varphi)(\alpha_1 \times \alpha_2).
\end{eqnarray*}
This means that the $\alpha_i$ are not trivial and therefore
the $\Gamma_i$ are both infinite. In particular, $\Gamma$ is presented by the product $\varphi \colon \Gamma_1 \times \Gamma_2 \longrightarrow \Gamma$.
This proves the following:

\begin{thm}[Kotschick-L\"oh \cite{KotschickLoeh1}]\label{t:KotschickLoehmain}
 If $M$ is a rationally essential manifold and $\pi_1(M)$ is not presentable by products, then $M$ is not dominated by products.
 \end{thm}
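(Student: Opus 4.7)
The plan is to argue by contradiction: assume $f \colon X_1 \times X_2 \longrightarrow M$ is a map of non-zero degree with both $X_i$ closed oriented of positive dimension, and produce from it a presentation by products of $\pi_1(M)$. The first step is to assemble the relevant groups. Let $\Gamma := \mathrm{im}(\pi_1(f))$ and $\Gamma_i := \mathrm{im}(\pi_1(f\vert_{X_i}))$. Because the $X_i$ sit in $X_1 \times X_2$ as commuting factors, $\Gamma_1$ and $\Gamma_2$ centralize each other in $\Gamma$, and since their union generates $\Gamma$, the multiplication map $\varphi \colon \Gamma_1 \times \Gamma_2 \longrightarrow \Gamma$ is a well-defined surjective group homomorphism. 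I would then note the standard fact that a non-zero-degree map forces $\Gamma$ to have finite index in $\pi_1(M)$: otherwise $f$ would lift through the non-compact covering of $M$ corresponding to $\Gamma$, and the fundamental class of $X_1 \times X_2$ would map to zero there, contradicting $\deg(f)\neq 0$.

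The second step is the homological computation encoded in the diagram of Figure~\ref{f:KotschickLoeh}. Using the homotopy equivalence $B\Gamma_1 \times B\Gamma_2 \simeq B(\Gamma_1 \times \Gamma_2)$, the map $B\pi_1(f) \circ c_{X_1 \times X_2}$ factors (up to homotopy) as $B\varphi \circ \bigl((B\pi_1(f\vert_{X_1})\circ c_{X_1}) \times (B\pi_1(f\vert_{X_2})\circ c_{X_2})\bigr)$. Applying $H_n(-;\Q)$ to the fundamental class $[X_1 \times X_2] = [X_1] \times [X_2]$ (Künneth), the outer square of the diagram gives
\[
\deg(f)\cdot H_n(c_M)([M]) \;=\; H_n(B\varphi)(\alpha_1 \times \alpha_2),
\]
where $\alpha_i := H_{\dim X_i}(B\pi_1(f\vert_{X_i})\circ c_{X_i})([X_i]) \in H_{\dim X_i}(B\Gamma_i;\Q)$.

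For the third step I would use rational essentialness of $M$: the left-hand side is non-zero, hence $\alpha_1 \times \alpha_2 \neq 0$ in $H_*(B\Gamma_1 \times B\Gamma_2;\Q)$, and consequently each $\alpha_i \neq 0$. Since a finite group has trivial rational homology in positive degrees, this forces both $\Gamma_i$ to be infinite. Thus $\varphi \colon \Gamma_1 \times \Gamma_2 \longrightarrow \Gamma$ exhibits $\Gamma$ as presentable by products, and composing with the inclusion $\Gamma \hookrightarrow \pi_1(M)$ (which is finite-index by the first step) shows that $\pi_1(M)$ itself is presentable by products, contradicting the hypothesis.

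The main obstacle is really conceptual rather than technical: it is the interplay between the Künneth decomposition of $[X_1\times X_2]$ and the commutativity of the two images $\Gamma_i$ in $\pi_1(M)$, which together allow the map $\pi_1(f)$ on the product to be repackaged as a single homomorphism $\varphi$ from $\Gamma_1 \times \Gamma_2$. Once this is in place, rational essentialness is used as a single detector: it both guarantees that the fundamental class survives in group homology and forces the two homological contributions $\alpha_i$ to be separately non-trivial. Care should be taken that $B\varphi$ is well-defined only after identifying $B\Gamma_1 \times B\Gamma_2$ with $B(\Gamma_1 \times \Gamma_2)$, and that presentability by products really does pass from the finite-index subgroup $\Gamma$ up to $\pi_1(M)$; both are routine but essential to close the argument.
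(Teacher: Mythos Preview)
Your proposal is correct and follows essentially the same approach as the paper's own argument in Section~\ref{s:notation}: set up $\Gamma_i = \mathrm{im}(\pi_1(f\vert_{X_i}))$ and the multiplication map $\varphi$, use the commutative diagram of Figure~\ref{f:KotschickLoeh} to obtain $\deg(f)\cdot H_n(c_M)([M]) = H_n(B\varphi)(\alpha_1\times\alpha_2)$, and deduce from rational essentialness that each $\alpha_i\neq 0$, hence each $\Gamma_i$ is infinite. You simply spell out a few points (the finite-index covering argument for $\Gamma\subset\pi_1(M)$, the K\"unneth step, and why $\alpha_i\neq 0$ forces $\Gamma_i$ infinite) that the paper leaves implicit or absorbs into the diagram.
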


\begin{rem}\label{r:KLGromov}
A consequence of Theorem \ref{t:KotschickLoehmain} is that Gromov's prediction was indeed correct. Namely, a locally symmetric space of non-compact type
is dominated by a product if and only
if it is virtually (isometric to) a product; cf.~\cite[Cor. 4.2]{KotschickLoeh1}. 
\end{rem}

\subsection{Groups presentable by products}\label{s:preliminariesPP}

We end this preliminary section with some elementary properties of groups presentable by products, mainly as introduced in~\cite[Section 3]{KotschickLoeh1}.

If a group $\Gamma$ is presentable by a product through a homomorphism $\varphi \colon \Gamma_1 \times \Gamma_2 \longrightarrow \Gamma$, then the images
$\varphi(\Gamma_i)$ commute with each other and $\varphi(\Gamma_1) \cup \varphi(\Gamma_2)$ generates $\mathrm{im}(\varphi)$. This means that, whenever a
group $\Gamma$ is presented by a product $\varphi \colon \Gamma_1 \times \Gamma_2 \longrightarrow \Gamma$, 
we can replace each $\Gamma_i$ by its image
$\varphi(\Gamma_i)$, $\Gamma$ by its finite index subgroup $\mathrm{im}(\varphi)$ and $\varphi$ by the multiplication map. Therefore we may always assume that $\Gamma$ can be presented by two element-wise commuting subgroups
$\Gamma_i$ through the multiplication map. The following properties can be easily verified:

\begin{lem}[\normalfont{\cite[Lemma 3.3]{KotschickLoeh1}}]\label{l:KotschickLoehproperiesPP}
Suppose $\Gamma_1, \Gamma_2$ are element-wise commuting subgroups of $\Gamma$ so that $\Gamma_1 \cup \Gamma_2$ generates $\Gamma$. Then the
multiplication map $\varphi \colon \Gamma_1 \times \Gamma_2 \longrightarrow \Gamma$ is a well-defined surjective homomorphism and the following statements
hold:
\begin{itemize}
\item[\normalfont{(1)}] the intersection $\Gamma_1 \cap \Gamma_2$ is a subgroup of the center $C(\Gamma)$;
\item[\normalfont{(2)}] the kernel of $\varphi$ is isomorphic to the Abelian group $\Gamma_1 \cap \Gamma_2$.
\end{itemize}
\end{lem}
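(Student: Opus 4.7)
The plan is to verify everything directly from the commuting assumption. First I would check that $\varphi$ is a well-defined surjective homomorphism. For two elements $(g_1,g_2), (h_1,h_2) \in \Gamma_1\times\Gamma_2$, one has
\[
\varphi\bigl((g_1,g_2)(h_1,h_2)\bigr) = g_1 h_1 g_2 h_2, \qquad \varphi(g_1,g_2)\varphi(h_1,h_2) = g_1 g_2 h_1 h_2,
\]
and these coincide because $h_1 \in \Gamma_1$ commutes element-wise with $g_2 \in \Gamma_2$. Surjectivity then follows immediately: any element of $\Gamma$ is a word in $\Gamma_1 \cup \Gamma_2$, and the element-wise commutativity lets us reorder such a word so that every letter from $\Gamma_1$ precedes every letter from $\Gamma_2$, producing an expression of the form $g_1 g_2$.

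For (1) I would take $g \in \Gamma_1 \cap \Gamma_2$ and an arbitrary $h \in \Gamma$ and use the surjectivity just established to write $h = h_1 h_2$ with $h_i \in \Gamma_i$. Then viewing $g$ as an element of $\Gamma_2$ lets it commute past $h_1$, and viewing it as an element of $\Gamma_1$ lets it commute past $h_2$, giving
\[
gh = g h_1 h_2 = h_1 g h_2 = h_1 h_2 g = hg,
\]
so $g \in C(\Gamma)$.

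For (2) I would define $\psi \colon \Gamma_1 \cap \Gamma_2 \longrightarrow \ker\varphi$ by $\psi(g) = (g, g^{-1})$. This lands in $\ker\varphi$ by construction, and it is a homomorphism because, by (1), $\Gamma_1 \cap \Gamma_2$ is abelian: for $g,h \in \Gamma_1 \cap \Gamma_2$ one has $\psi(gh) = (gh, h^{-1}g^{-1}) = (gh, g^{-1}h^{-1}) = \psi(g)\psi(h)$. Conversely, if $(g_1,g_2) \in \ker\varphi$ then $g_2 = g_1^{-1}$, and since $g_2 \in \Gamma_2$ while $g_1^{-1} \in \Gamma_1$, we conclude $g_1 \in \Gamma_1 \cap \Gamma_2$. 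Thus $(g_1,g_2) \mapsto g_1$ provides a two-sided inverse to $\psi$, and $\psi$ is an isomorphism.

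None of the steps should present any real obstacle; the only point that requires a moment's care is the homomorphism property of $\psi$, which is where (1) is used to guarantee that $\Gamma_1 \cap \Gamma_2$ is abelian so that the inversion $g \mapsto g^{-1}$ is compatible with multiplication in the second coordinate.
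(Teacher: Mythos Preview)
Your proof is correct. The paper does not supply its own proof of this lemma (it is quoted from \cite{KotschickLoeh1} and introduced with ``can be easily verified''), but immediately afterwards it notes that the isomorphism between $\Gamma_1\cap\Gamma_2$ and $\ker\varphi$ is given by the antidiagonal, which is exactly your map $\psi(g)=(g,g^{-1})$; so your argument is precisely the intended one.
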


In particular, there exists a short exact sequence
\begin{equation}\label{eq:PPses}
 1 \longrightarrow \Gamma_1\cap\Gamma_2 \longrightarrow \Gamma_1 \times \Gamma_2 \stackrel{\varphi}\longrightarrow \Gamma \longrightarrow 1,
\end{equation}
where the isomorphism between $\Gamma_1\cap\Gamma_2$ and the kernel of $\varphi$ is given by the antidiagonal.
For groups with finitely generated center we moreover observe the following:

\begin{lem}\label{l:finitelygeneratedcenter}
Let $\Gamma$ be a finitely generated group with finitely generated center. Assume that $\Gamma$ is 
presented by a product $\Gamma_1\times \Gamma_2$ as in Lemma \ref{l:KotschickLoehproperiesPP}. Then each of the factors $\Gamma_i$ is finitely generated.
\end{lem}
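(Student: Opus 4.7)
The plan is to extract finitely generated ``pieces'' of $\Gamma_1$ and $\Gamma_2$ from a finite generating set of $\Gamma$, and then use Lemma \ref{l:KotschickLoehproperiesPP}(1) together with finite generation of $C(\Gamma)$ to fill in the gap. First I would fix a finite generating set $g_1,\dots,g_k$ of $\Gamma$, and use the surjectivity of the multiplication map $\varphi\colon\Gamma_1\times\Gamma_2\longrightarrow\Gamma$ to write each $g_j = a_j b_j$ with $a_j\in\Gamma_1$ and $b_j\in\Gamma_2$. Setting $\Gamma_1' := \langle a_1,\dots,a_k\rangle\leq\Gamma_1$ and $\Gamma_2' := \langle b_1,\dots,b_k\rangle\leq\Gamma_2$, the two subgroups $\Gamma_1'$ and $\Gamma_2'$ commute element-wise (inherited from $\Gamma_1,\Gamma_2$) and $\Gamma_1'\cup\Gamma_2'$ contains all $g_j$, hence generates $\Gamma$. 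Since element-wise commuting subgroups whose union generates a group satisfy $\Gamma = \Gamma_1'\cdot\Gamma_2'$, every element of $\Gamma$ is already a product of one element of $\Gamma_1'$ and one of $\Gamma_2'$.

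Next, I would use this to pin down $\Gamma_1$. Given any $x\in\Gamma_1$, write $x = a b$ with $a\in\Gamma_1'\subseteq\Gamma_1$ and $b\in\Gamma_2'\subseteq\Gamma_2$. Then $b = a^{-1}x\in\Gamma_1$, so $b\in\Gamma_1\cap\Gamma_2$. Therefore
\[
\Gamma_1 \;\subseteq\; \Gamma_1'\cdot(\Gamma_1\cap\Gamma_2),
\]
and by symmetry $\Gamma_2\subseteq\Gamma_2'\cdot(\Gamma_1\cap\Gamma_2)$. By Lemma \ref{l:KotschickLoehproperiesPP}(1), the intersection $\Gamma_1\cap\Gamma_2$ lies in the abelian group $C(\Gamma)$, which is finitely generated by hypothesis; hence $\Gamma_1\cap\Gamma_2$ is itself finitely generated as a subgroup of a finitely generated abelian group. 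Combined with the finite generation of $\Gamma_1'$ and $\Gamma_2'$, this shows that $\Gamma_1$ is generated by the finite set consisting of $\{a_1,\dots,a_k\}$ together with a finite generating set of $\Gamma_1\cap\Gamma_2$, and similarly for $\Gamma_2$.

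There is no serious obstacle in this argument; the only point that requires care is the use of Lemma \ref{l:KotschickLoehproperiesPP}(1) to force the ``leftover'' factor $b$ into the abelian, finitely generated group $C(\Gamma)$, which is exactly the hypothesis that makes the proof go through. Without finite generation of $C(\Gamma)$, one could only conclude that $\Gamma_i$ is generated modulo $\Gamma_1\cap\Gamma_2$ by finitely many elements, which would be insufficient.
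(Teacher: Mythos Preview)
Your proof is correct, but it proceeds differently from the paper's. The paper argues via the short exact sequences
\[
1 \longrightarrow \Gamma_i \longrightarrow \Gamma \longrightarrow \Gamma_j/(\Gamma_1\cap\Gamma_2) \longrightarrow 1,
\]
obtained by composing the isomorphism $\Gamma\cong(\Gamma_1\times\Gamma_2)/(\Gamma_1\cap\Gamma_2)$ with the projection onto $\Gamma_j$. The quotient $\Gamma_j/(\Gamma_1\cap\Gamma_2)$ is finitely generated as an image of $\Gamma$, and $\Gamma_1\cap\Gamma_2$ is finitely generated as a subgroup of the finitely generated abelian group $C(\Gamma)$; hence $\Gamma_j$, being an extension of the former by the latter, is finitely generated. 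Your argument bypasses these exact sequences entirely: you split a finite generating set of $\Gamma$ into its $\Gamma_1$- and $\Gamma_2$-components and show directly that $\Gamma_1=\Gamma_1'\cdot(\Gamma_1\cap\Gamma_2)$. Both approaches ultimately hinge on the same fact---that $\Gamma_1\cap\Gamma_2\subset C(\Gamma)$ is finitely generated abelian---but yours is more elementary and explicitly constructive, producing an actual finite generating set, while the paper's extension-theoretic approach yields the exact sequences \eqref{eq:PPprojections} as a byproduct, which are reused later (e.g.\ in Proposition~5.1).
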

\begin{proof}
For $i,j \in \{1,2\}$, $i \neq j$, there exist (two) short exact sequences
\begin{equation}\label{eq:PPprojections}
 1 \longrightarrow \Gamma_i \longrightarrow \Gamma \longrightarrow \Gamma_j/(\Gamma_1 \cap \Gamma_2) \longrightarrow 1,
\end{equation}
where $\Gamma \longrightarrow \Gamma_j/(\Gamma_1 \cap \Gamma_2)$ is obtained by
composing the isomorphism $\Gamma \cong (\Gamma_1 \times \Gamma_2)/(\Gamma_1 \cap \Gamma_2)$ (cf. sequence (\ref{eq:PPses})) with the homomorphism induced
by the
projection from $\Gamma_1 \times \Gamma_2$ to $\Gamma_j$ (see also~\cite{KotschickLoeh2}). Since $\Gamma$ is finitely generated, the short exact sequence (\ref{eq:PPprojections}) implies that the group $\Gamma_j/(\Gamma_1 \cap \Gamma_2)$
is also finitely generated. Moreover, the center $C(\Gamma)$ is finitely generated Abelian and thus the intersection $\Gamma_1 \cap \Gamma_2$ is also
finitely generated Abelian by Lemma \ref{l:KotschickLoehproperiesPP}. This shows that $\Gamma_j$ is also finitely generated.
\end{proof}

\section{The case of reducible groups (proof of Theorem \ref{thmA})}

If $M$ is a closed aspherical $n$-dimensional manifold whose fundamental group $\pi_1(M)$ is reducible, then there exists a finite cover of $M$ with fundamental group isomorphic to a direct product $\Gamma_1\times\Gamma_2$. 
Thus, up to finite covers, we may identify $\pi_1(M)$ with $\Gamma_1\times \Gamma_2$. 
Then $B\Gamma_1\times B\Gamma_2$ is homotopy equivalent to $M$.  
In particular, 
there exists a non-trivial class $\alpha\in H_n(B\Gamma_1\times\ B\Gamma_2)$ mapping to the fundamental class $[M]\in H_n(M)$. 

Since each of the $\Gamma_i$ has infinite index in $\pi_1(M)$ and $M$ is aspherical manifold of dimension $n$, a theorem of Strebel~\cite{Sterbel} implies that the cohomological dimensions of each of the $\Gamma_i$ is less than $n$. Thus, the K\"unneth formula (with rational coefficients) in degree $n$ for the product $B\Gamma_1\times B\Gamma_2$ implies that there exist non-trivial homology classes $a_1\in H_k(B\Gamma_1)$ and $a_2\in H_{n-k}(B\Gamma_2)$, where $0<k<n$, such that $\alpha=a_1\otimes a_2$. 

By Thom's theorem \cite{Thom}, there exist two closed smooth manifolds $X_1$ and $X_2$ of dimensions $k$ and $n-k$ respectively, together with continuous maps $g_i\colon X_i\longrightarrow B\Gamma_i$, $i=1,2$, such that $H_*(g_i)([X_i])=d_i\cdot a_i$, for some non-zero integers $d_i$. Finally, the product map
\[
 X_1\times X_2\stackrel{g_1\times g_2}\longrightarrow B\Gamma_1\times B\Gamma_2\simeq M
\]
is continuous and in homology of degree $n$ maps the fundamental class $[X_1\times X_2]$ to a non-zero multiple of $[M]$. This finishes the proof of Theorem \ref{thmA}.

\section{Circle bundles with fundamental groups not IIPP \\(proofs of Theorems \ref{thmB} and \ref{thmC})}\label{s:IIPPdefinition} 

The purpose of this section is to introduce (in more detail) the property IIPP and prove Theorems \ref{thmB} and \ref{thmC}.

\subsection{Motivation and definition of the property IIPP}

Two basic examples of groups presentable by products are given by the reducible ones and by groups
containing a finite index subgroup with infinite center; cf. Example \ref{ex:examplesPPtf}. If follows by Lemma \ref{l:KotschickLoehproperiesPP} that
these two - not generally distinct - classes contain all torsion-free groups presentable by products \cite[Prop. 3.2]{KotschickLoeh1}.

A reducible group $\Gamma$ can always be presented (being a virtual product) by a product $\Gamma_1 \times \Gamma_2$ so that both subgroups $\Gamma_i$  have infinite index in $\Gamma$, whereas a group with infinite center does not generally have this property; a trivial example is given by the infinite cyclic group. On the topological level, Theorem \ref{t:KotschickLoehmain} states that whenever a rationally essential manifold $M$ is dominated by a product, its fundamental group is presentable by products. However, (the proof of) that result does not give any additional information on the index of the factors of a product presenting $\pi_1(M)$. The following example shows that all the possibilities can actually occur:

%\newpage

\begin{ex}\label{ex:allcasesindex} \
\begin{itemize}
 \item[\normalfont{(1)}] Let $M$ be a closed oriented manifold of positive dimension and infinite fundamental group. The identity map $\mathrm{id}_{M \times M}$ of
the product $M \times M$ is obviously $\pi_1$-surjective of degree one and both subgroups $\mathrm{im}(\pi_1(\mathrm{id}_{M})) = \pi_1(M)$ have infinite index in $\pi_1(M
\times M)$.

\item[\normalfont{(2)}] For $g \geq 1$, let $\Sigma_{g+1} = \Sigma_g \# (S_a^1 \times S_b^1)$ be a closed oriented surface of genus $g + 1$. Let
the composition
\begin{equation}\label{eq.counterex3}
 \Sigma_g \# (S_a^1 \times S_b^1) \stackrel{q}\longrightarrow  \Sigma_g \vee (S_a^1 \times S_b^1) \stackrel{\mathrm{id} \vee p}\longrightarrow \Sigma_g \vee S_b^1,
\end{equation}
where $q$ is the quotient map pinching to a point the essential circle defining the connected sum $\Sigma_g \# (S_a^1 \times S_b^1)$, $\mathrm{id}$ is
the identity map of $\Sigma_g$ and the map $p$ pinches to a point the meridian of the torus $S_a^1 \times S_b^1$; cf. Figure~\ref{f:counterex3}.
Denote by $h$ the composition $(\mathrm{id} \vee p) \circ q$. Now, let the composite map
\[
 \Sigma_{g+1} \times S_c^1 \stackrel{h \times \mathrm{id}_c}\longrightarrow (\Sigma_g \vee S_b^1) \times S_c^1 \stackrel{g}\longrightarrow \Sigma_g \times S_c^1,
\]
where $\mathrm{id}_c$ is the identity map of $S_c^1$ and $g$ is the identity on $\Sigma_g$ and $S_c^1$, and sends the generator $b$ of $S_b^1$ to the generator $c$ of $S_c^1$. 
\begin{figure}
\labellist
\pinlabel {$\stackrel{\mathrm{id} \vee p}\longrightarrow$} at 592 37
\pinlabel {$\stackrel{q}\longrightarrow$} at 287 37
\endlabellist
\centering
\includegraphics[width=15.9cm]{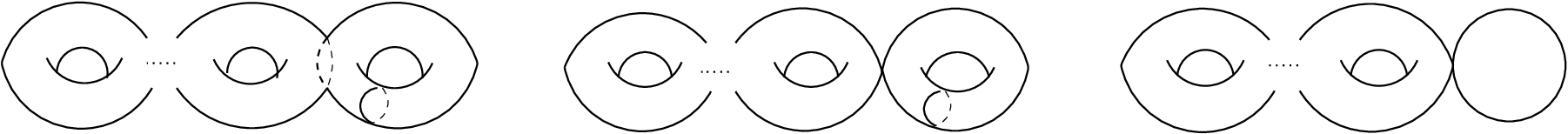}
\caption{\small The map $(\mathrm{id} \vee p) \circ q \colon \Sigma_{g+1} \longrightarrow \Sigma_g \vee S_b^1$ }
\label{f:counterex3}
\end{figure}
Let $f:= g \circ (h \times \mathrm{id}_c) \colon \Sigma_{g+1} \times S_c^1 \longrightarrow \Sigma_g \times S_c^1$. Then 
$
 H_3(f)([\Sigma_{g+1} \times S^1]) = [\Sigma_{g} \times S^1],
$
i.e. $\deg(f) = 1$. By the definition of $f$, we obtain an index-one subgroup of $\pi_1(\Sigma_g \times S^1)$, namely $\mathrm{im}(\pi_1(f\vert_{\Sigma_{g+1}})) =
\pi_1(\Sigma_g \times S^1)$, and the infinite-index subgroup $\mathrm{im}(\pi_1(f\vert_{S^1})) = \pi_1(S^1) \subset \pi_1(\Sigma_g \times S^1)$.

\item[\normalfont{(3)}] Let two copies of a closed oriented surface of genus three,
\begin{eqnarray*}
\Sigma_3 & = & (S_{g_1}^1 \times S_{g_2}^1) \# (S_{a_1}^1 \times S_{a_2}^1) \# (S_{b_1}^1 \times S_{b_2}^1) \\
\Sigma'_3 & = & (S_{g'_1}^1 \times S_{g'_2}^1)\# (S_{a'_1}^1 \times S_{a'_2}^1) \# (S_{b'_1}^1 \times S_{b'_2}^1).
\end{eqnarray*}
As in the previous example (cf. Figure \ref{f:counterex3}), define $h \colon \Sigma_3 \longrightarrow (S_{g_1}^1 \times S_{g_2}^1) \vee S_{a_2}^1 \vee
S_{b_2}^1$ as the composition
\[
 \Sigma_3 \stackrel{q}\longrightarrow (S_{g_1}^1 \times S_{g_2}^1) \vee (S_{a_1}^1 \times S_{a_2}^1) \vee (S_{b_1}^1 \times S_{b_2}^1) 
\stackrel{\mathrm{id} \vee p \vee p }\longrightarrow (S_{g_1}^1 \times S_{g_2}^1) \vee S_{a_2}^1 \vee S_{b_2}^1
\]
(see above for the notation). Now, let the composition
\[
 \Sigma_3 \times \Sigma'_3 \stackrel{h \times h'}\longrightarrow ((S_{g_1}^1 \times S_{g_2}^1) \vee S_{a_2}^1 \vee S_{b_2}^1) \times 
((S_{g'_1}^1 \times S_{g'_2}^1) \vee S_{a'_2}^1 \vee S_{b'_2}^1) 
\stackrel{g}\longrightarrow S_{g_1}^1 \times S_{g_2}^1 \times S_{g'_1}^1 \times S_{g'_2}^1,
\]
where $h,h'$ are defined above, and $g$ restricts to the identity map on each $S_{g_j}^1$ and $S_{g'_j}^1$, and is given as follows on the rest of
the circles:
\begin{eqnarray*}
 a_2 \mapsto g'_1, \ \ b_2 \mapsto g'_2, \ \ a'_2 \mapsto g_1, \ \ b'_2 \mapsto g_2.
\end{eqnarray*}

We define $f \colon \Sigma_3 \times \Sigma'_3 \longrightarrow T^4$ to be the composition $g \circ (h \times h')$. Again, $f$ is a degree one map. However,
both $\mathrm{im}(\pi_1(f\vert_{\Sigma_3}))$ and $\mathrm{im}(\pi_1(f\vert_{\Sigma'_3}))$ are now of index one in $\pi_1(T^4)$. 

We note that this construction cannot be generalized when the target is not a product of two tori, $T^2 \times T^2$, because the generators of higher genus
surfaces do not commute with each other. Actually, it will be transparent by the discussion in the upcoming subsection (cf. Lemma \ref{l:notIIPPalgebraic}),
that, if an $n$-dimensional aspherical
manifold $M$ admits a map $f \colon X_1 \times X_2 \longrightarrow M$ so that both subgroups $\mathrm{im}(\pi_1(f\vert_{X_i})) \subset \pi_1(M)$ are of finite
index, then $M$ is a virtual $n$-dimensional torus $T^n$.
\end{itemize}
\end{ex}

In this paper, we analyze groups presentable by products by adding a constraint on the index of the presenting factors. More precisely, we introduce the following class of groups: 

\begin{defn}\label{d:IIPP}
 An infinite group $\Gamma$ is called {\em infinite-index presentable by products} (IIPP) if there is a homomorphism $\varphi \colon \Gamma_1 \times
\Gamma_2 \longrightarrow \Gamma$ onto a finite index subgroup of $\Gamma$ so that for both factors $\Gamma_i$ the images $\varphi(\Gamma_i) \subset \Gamma$
are of infinite index in $\Gamma$. 
Otherwise, $\Gamma$ is called {\em not infinite-index presentable by products} (not IIPP).
\end{defn}

In the upcoming subsection, we will show that an aspherical manifold with fundamental group not IIPP can be dominated by a product only if it is dominated by a product containing a torus factor. This will imply that large classes of aspherical manifolds with fundamental groups not IIPP cannot be dominated by products. In the case of circle bundles, we will prove that, under a certain assumption on the fundamental group of the base, the condition ``IIPP'' characterizes aspherical circle bundles that are dominated by products. Without that additional assumption on the base, this characterization does not generally hold (as we have already mentioned in Example \ref{ex:Heisenberg}).

\subsection{Not IIPP as a non-domination criterion (proof of Theorem \ref{thmB})}

We first extend the non-existence results of~\cite{KotschickLoeh1} to certain rationally essential manifolds with fundamental groups presentable by products,
but not IIPP.  The strong feature of such torsion-free groups is that one of the presenting subgroups must be Abelian: 

\begin{lem}\label{l:notIIPPalgebraic}
Let $\Gamma$ be a finitely generated torsion-free group with finitely generated center. Suppose that there exist element-wise commuting subgroups $\Gamma_1, \Gamma_2
\subset \Gamma$ so that $\Gamma_1 \cup \Gamma_2$ generates $\Gamma$. If $\Gamma$ is not IIPP, then one of the $\Gamma_i$ is isomorphic to
$\Z^k$ for some $k \leq \mathrm{rank} C(\Gamma)$.
\end{lem}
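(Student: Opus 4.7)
The plan is to use the non-IIPP hypothesis to produce a finite-index central subgroup inside one of the $\Gamma_i$, and then invoke Schur's classical theorem together with torsion-freeness to conclude abelianness.

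First, since $\Gamma_1\cup\Gamma_2$ generates $\Gamma$ and the two subgroups commute element-wise, the multiplication map $\varphi\colon \Gamma_1\times\Gamma_2\longrightarrow\Gamma$ is a well-defined surjective homomorphism, hence a presentation of $\Gamma$ by a product onto a finite-index subgroup (namely all of $\Gamma$). Because $\Gamma$ is not IIPP, one of the factors must have finite index in $\Gamma$; relabeling if necessary, I assume $[\Gamma:\Gamma_1]<\infty$. I would then feed this into the short exact sequence (\ref{eq:PPprojections}),
\[
1 \longrightarrow \Gamma_1 \longrightarrow \Gamma \longrightarrow \Gamma_2/(\Gamma_1\cap\Gamma_2) \longrightarrow 1.
\]
Finiteness of $\Gamma/\Gamma_1$ gives finiteness of $\Gamma_2/(\Gamma_1\cap\Gamma_2)$, so $\Gamma_1\cap\Gamma_2$ has finite index in $\Gamma_2$. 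By Lemma \ref{l:KotschickLoehproperiesPP} one has $\Gamma_1\cap\Gamma_2\subseteq C(\Gamma)\subseteq C(\Gamma_2)$, so $C(\Gamma_2)$ itself has finite index in $\Gamma_2$; equivalently, $\Gamma_2/C(\Gamma_2)$ is finite.

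The key step is then Schur's theorem: if $G/Z(G)$ is finite, the commutator subgroup $[G,G]$ is finite. Applied to $\Gamma_2$, it yields that $[\Gamma_2,\Gamma_2]$ is finite. Since $\Gamma_2$ is torsion-free (as a subgroup of the torsion-free group $\Gamma$), this forces $[\Gamma_2,\Gamma_2]=1$, so $\Gamma_2$ is abelian. By Lemma \ref{l:finitelygeneratedcenter}, $\Gamma_2$ is moreover finitely generated, hence $\Gamma_2\cong\Z^k$. For the rank bound: now that $\Gamma_2$ is abelian and still commutes element-wise with $\Gamma_1$, it commutes with every element of $\langle\Gamma_1,\Gamma_2\rangle=\Gamma$, so $\Gamma_2\subseteq C(\Gamma)$, which gives $k=\mathrm{rank}\,\Gamma_2\leq\mathrm{rank}\,C(\Gamma)$.

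The main obstacle is the passage from ``contains a finite-index central subgroup'' to ``is abelian.'' Without the torsion-free hypothesis this implication fails (a Bieberbach group such as the Klein bottle group is finitely generated, torsion-free and virtually abelian, but not abelian, and correspondingly has no finite-index central subgroup), so the whole argument hinges on combining Schur's theorem with torsion-freeness. Identifying Schur's theorem as the right tool — rather than trying to analyze centralizers in $\Gamma_2$ directly — is the conceptual leap; once it is in place, the remaining bookkeeping via the sequence (\ref{eq:PPprojections}) and Lemmas \ref{l:KotschickLoehproperiesPP}--\ref{l:finitelygeneratedcenter} is routine.
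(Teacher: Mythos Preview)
Your proof is correct and follows essentially the same route as the paper's: both hinge on Schur's theorem together with torsion-freeness to pass from ``$\Gamma_2$ has a finite-index central subgroup'' to ``$\Gamma_2$ is abelian.'' The only minor difference is in the rank bound: the paper tracks $k$ as the rank of $\Gamma_1\cap\Gamma_2\subseteq C(\Gamma)$ and uses that $\Gamma_2$ is virtually equal to this intersection, whereas you observe more directly that once $\Gamma_2$ is abelian it commutes with both $\Gamma_1$ and itself, hence with all of $\Gamma$, so $\Gamma_2\subseteq C(\Gamma)$ outright --- a slightly cleaner endgame.
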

\begin{proof}
By Lemma \ref{l:KotschickLoehproperiesPP}, there is a short exact sequence
\[
 1 \longrightarrow \Gamma_1\cap\Gamma_2 \longrightarrow \Gamma_1 \times \Gamma_2 \stackrel{\varphi}\longrightarrow \Gamma \longrightarrow 1,
\]
where $\varphi$ is the multiplication map and the intersection $\Gamma_1 \cap \Gamma_2$ is contained in the finitely generated center $C(\Gamma)$. Since
$\Gamma$ is torsion-free, we have that $\Gamma_1 \cap \Gamma_2$ is isomorphic to $\Z^k$ for some $k \leq \mathrm{rank} C(\Gamma)$. Moreover, each $\Gamma_i$ is
finitely generated by Lemma \ref{l:finitelygeneratedcenter}.

Because $\Gamma$ is not IIPP, one of the $\Gamma_i$, say $\Gamma_1$, must have finite index in $\Gamma$. This means that $\Gamma_2$ is virtually
$\Gamma_1 \cap \Gamma_2$, and so it is virtually Abelian. Moreover, the intersection $\Gamma_1 \cap \Gamma_2$ is central in $\Gamma_2$, which implies that
$\Gamma_2/C(\Gamma_2)$ is finite (because $\Gamma_2$ is virtually $\Gamma_1 \cap \Gamma_2$). By Schur's theorem~\cite{Schur}, we conclude that the commutator
$[\Gamma_2,\Gamma_2]$ is also finite and so trivial, because $\Gamma_2$ is torsion-free. This shows that $\Gamma_2$ is Abelian itself and thus
isomorphic to $\Z^k$.
\end{proof}

As a warm-up, we observe that, in the torsion-free case, Lemma \ref{l:notIIPPalgebraic} yields the following dimension restrictions on the factors of a product that dominates a rationally essential manifold:

\begin{prop}\label{p:notIIPPdimensions}
 Let $M$ be a rationally essential manifold so that $\pi_1(M)$ is torsion-free and $\mathrm{rank} C(\pi_1(M)) = r$. If $\pi_1(M)$ is not IIPP,
then there is no $\pi_1$-surjective non-zero degree map $X_1 \times X_2 \longrightarrow M$, whenever $\min \{\dim X_1, \dim X_2\} > r$.
\end{prop}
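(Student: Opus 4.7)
The strategy is to combine the Kotschick--L\"oh dimensional identity recalled in Section~\ref{s:notation} with the algebraic constraint provided by Lemma~\ref{l:notIIPPalgebraic} in order to force a dimension obstruction.

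Suppose, for contradiction, that $f \colon X_1 \times X_2 \longrightarrow M$ is a $\pi_1$-surjective map of non-zero degree with $\min\{\dim X_1,\dim X_2\} > r$. Set $\Gamma_i := \mathrm{im}(\pi_1(f\vert_{X_i})) \subset \pi_1(M)$. The $\pi_1$-surjectivity of $f$ ensures that $\Gamma_1 \cup \Gamma_2$ generates $\pi_1(M)$, while the product structure of the domain forces $\Gamma_1$ and $\Gamma_2$ to commute element-wise; thus the multiplication map $\varphi \colon \Gamma_1 \times \Gamma_2 \longrightarrow \pi_1(M)$ is a well-defined surjection. The Kotschick--L\"oh observation recalled in Section~\ref{s:notation} then shows that the classes
\[
\alpha_i := H_{\dim X_i}(B\pi_1(f\vert_{X_i}) \circ c_{X_i})([X_i]) \in H_{\dim X_i}(B\Gamma_i; \Q)
\]
are both non-zero, using that $M$ is rationally essential and $\deg(f) \neq 0$. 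In particular, each $\Gamma_i$ is infinite and $\pi_1(M)$ is genuinely presented by the product $\varphi$.

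I would next invoke Lemma~\ref{l:notIIPPalgebraic}. Since $\pi_1(M)$ is finitely generated (it is the fundamental group of a closed manifold), torsion-free, and has finitely generated center of rank $r$, and since by hypothesis $\pi_1(M)$ is not IIPP, the lemma applies to the presentation $\varphi$ and yields that one of the factors -- say $\Gamma_2$ after relabelling -- is isomorphic to $\Z^k$ for some $k \leq r$. Consequently $B\Gamma_2$ has the homotopy type of the torus $T^k$, and therefore $H_j(B\Gamma_2;\Q) = 0$ for every $j > k$.

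The dimensional hypothesis $\dim X_2 > r \geq k$ then forces $\alpha_2 \in H_{\dim X_2}(T^k;\Q) = 0$, contradicting the non-vanishing of $\alpha_2$ obtained above. Equivalently, substituting $\alpha_2 = 0$ into the Kotschick--L\"oh identity
\[
0 \neq \deg(f) \cdot H_n(c_M)([M]) = H_n(B\varphi)(\alpha_1 \times \alpha_2)
\]
gives $0 = \deg(f) \cdot H_n(c_M)([M])$, a contradiction. No step demands serious new work beyond the invocation of Lemma~\ref{l:notIIPPalgebraic}; the only point worth highlighting is that the Kotschick--L\"oh identity is sharp enough to be killed as soon as one of the presenting subgroups is compressed into a free Abelian group of rank at most $r$, which is precisely what the torsion-free ``not IIPP'' hypothesis provides.
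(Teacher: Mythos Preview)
Your proof is correct and follows essentially the same approach as the paper's: set up the presentation $\varphi \colon \Gamma_1 \times \Gamma_2 \to \pi_1(M)$, invoke Lemma~\ref{l:notIIPPalgebraic} to force one factor to be $\Z^k$ with $k \leq r$, and then use the non-vanishing of the Kotschick--L\"oh class $\alpha_i \in H_{\dim X_i}(T^k;\Q)$ to conclude $\dim X_i \leq k \leq r$. The paper additionally remarks that $k \geq 1$ (otherwise $\pi_1(M)$ would split as a product and hence be IIPP), but this observation is not needed for the contradiction and your omission of it is harmless.
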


\begin{proof}
 Suppose that there exist $X_1$, $X_2$ closed oriented manifolds of positive dimensions and a $\pi_1$-surjective non-zero degree map $f \colon
X_1 \times X_2 \longrightarrow M$. Then there is a short exact sequence
\begin{equation}\label{eq:sesnonzerodegree}
 1 \longrightarrow \Gamma_1\cap\Gamma_2 \longrightarrow \Gamma_1 \times \Gamma_2 \stackrel{\varphi}\longrightarrow \pi_1(M) \longrightarrow 1,
\end{equation}
where $\varphi$ is the multiplication map, $\Gamma_i := \pi_1(f\vert_{X_i})(\pi_1(X_i))$ and $\Gamma_1 \cap \Gamma_2 \subset C(\pi_1(M))$; see
Section \ref{s:notation}.
In particular, $\Gamma_1 \cap \Gamma_2$ is isomorphic to $\Z^k$, for some $k \leq r = \mathrm{rank} C(\pi_1(M))$, because it is torsion-free. We moreover observe that
$k \geq 1$,
otherwise $\pi_1(M)$ would be isomorphic to the product $\Gamma_1 \times \Gamma_2$ by (\ref{eq:sesnonzerodegree}) and so IIPP.

We now apply Lemma \ref{l:notIIPPalgebraic} to $\pi_1(M)$ to conclude that one of the $\Gamma_i$, say $\Gamma_2$, is isomorphic to $\Z^k$. This means
that $B\Gamma_2 \simeq T^k$ and by the non-vanishing (cf. Section \ref{s:notation}) of %the homology class
\[
 \alpha_2 := H_{\dim X_2}(B\pi_1(f\vert_{X_2})\circ c_{X_2})([X_2]) \in H_{\dim X_2}(T^k;\Q),
\]
we deduce that $\dim X_2 \leq k$. This is possible only if $\min \{\dim X_1, \dim X_2\} \leq r$.
\end{proof}

In the case where $M$ is aspherical, then the above proposition says that, if $f\colon X_1\times X_2\longrightarrow M$ is a $\pi_1$-surjective map of non-zero degree, then there is a finite cover $\overline{M}$ of $M$ such that $f$ factors through the map $B\varphi \colon \overline{M}\times T^k\longrightarrow M$ which is induced by the multiplication homomorphism $\varphi\colon\Gamma_1\times\Gamma_2\longrightarrow\pi_1(M)$, for some $k\leq\mathrm{rank} C(\pi_1(M))$; cf.  Figure \ref{f:KotschickLoeh}. In particular, there exist two non-trivial homology classes $\alpha_1\in H_{\dim X_1}(\overline{M};\Q)$ and $\alpha_2\in H_{\dim X_2}(T^k;\Q)$ such that $H_n(B\varphi)(\alpha_1\times\alpha_2)=\deg(f)\cdot[M]$. Thus $X_1\times T^m\geq M$, where $m=\dim X_2\leq k$.

\begin{ex}[$1$-domination]
 Let $M$ be a closed aspherical manifold. If $\pi_1(M)$ has infinite cyclic center and it is not IIPP, then $M$ can admit a
degree one map by a product $X_1 \times X_2$ only if one of the $X_i$ is a circle. (Recall that a map of degree one is $\pi_1$-surjective.)
\end{ex}

Aspherical manifolds whose fundamental groups have infinite cyclic center are of special interest, 
in particular with respect to the study of circle bundles and circle actions; see~\cite{CWY} and the references there. We begin with two general facts about finite coverings of circle bundles:

\begin{lem}\label{l:propertiescirclebundles}
Let $M \stackrel{\pi}\longrightarrow B$ be a circle bundle over a closed oriented manifold $B$.
\begin{itemize}
 \item[\normalfont{(1)}] Every finite cover $\overline{M} \stackrel{p}\longrightarrow M$ is a circle bundle over a finite
cover $B' \stackrel{p'}\longrightarrow B$. If moreover $B$ is aspherical and $\pi_1(B)$ is not presentable by products, then $\pi_1(\overline{M})$ and
$\pi_1(M)$ have infinite cyclic center.
 \item[\normalfont{(2)}] {\normalfont{(\cite{Bowden})}} If the Euler class of $M$ is torsion, then $M$ is a virtually trivial circle bundle over a
finite cover of $B$.
\end{itemize}
\end{lem}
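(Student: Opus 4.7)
\medskip

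\textbf{Plan for Lemma \ref{l:propertiescirclebundles}.} The two parts are essentially independent and part (2) is cited from Bowden, so the bulk of the work is part (1). The plan is to unravel the short exact sequence
\[
 1 \longrightarrow \Z \longrightarrow \pi_1(M) \stackrel{\pi_*}\longrightarrow \pi_1(B) \longrightarrow 1
\]
of the fibration $M\to B$, where the kernel is the infinite cyclic fiber subgroup $\pi_1(S^1)$, and to track what happens to it under passage to a finite-index subgroup $H:=\pi_1(\overline{M})\subset \pi_1(M)$.

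\medskip

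First I would address the fiber bundle claim in (1). The group $H\cap \Z$ has finite index in $\Z$ and is therefore infinite cyclic; the image $\pi_*(H)\subset \pi_1(B)$ has finite index and determines a finite cover $B'\to B$ with $\pi_1(B')=\pi_*(H)$. Restricting the above short exact sequence to $H$ gives
\[
 1 \longrightarrow H\cap \Z \longrightarrow H \longrightarrow \pi_1(B') \longrightarrow 1,
\]
which, together with the fact that a finite cover of a fiber bundle is again a fiber bundle with finitely-covered fiber and base, exhibits $\overline{M}$ as a circle bundle over $B'$. This step is routine covering space theory; the only care needed is to note that $H\cap\Z$ is infinite (so the fiber of $\overline{M}\to B'$ is again $S^1$ rather than collapsing).

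\medskip

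Next I would prove the center statement. The first observation is that if $B$ is aspherical then $\pi_1(B)$ is torsion-free; combined with the assumption that $\pi_1(B)$ is not presentable by products, this forces $C(\pi_1(B))=1$. Indeed, any non-trivial central element has infinite order (by torsion-freeness), so $C(\pi_1(B))$ would be infinite, and then the multiplication homomorphism $C(\pi_1(B))\times \pi_1(B)\longrightarrow \pi_1(B)$ would present $\pi_1(B)$ by products, contradicting the hypothesis (compare Example \ref{ex:examplesPPtf}(2)). Now the fiber subgroup $\Z\subset \pi_1(M)$ is central because $M\to B$ is an oriented circle bundle, so the monodromy action of $\pi_1(B)$ on $\pi_1(S^1)=\Z$ is trivial. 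Conversely, any element of $C(\pi_1(M))$ projects to an element of $C(\pi_1(B))=1$, hence lies in the fiber. Thus $C(\pi_1(M))=\Z$. The same reasoning goes through verbatim for $\overline{M}\to B'$: the property ``not presentable by products'' passes to the finite-index subgroup $\pi_1(B')\subset\pi_1(B)$ (as noted after Definition \ref{d:PP}), and $B'$ remains aspherical, so $C(\pi_1(B'))=1$ and hence $C(\pi_1(\overline{M}))=H\cap\Z\cong\Z$.

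\medskip

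Finally, part (2) is quoted directly from Bowden's work: a torsion Euler class becomes trivial after pulling back along a suitable finite cover $B'\to B$, and the pulled-back bundle is then a product $B'\times S^1$, which is a finite cover of $M$. The only forward-looking remark is that we will only use this result as a black box in the subsequent arguments. I do not expect any real obstacle here: the whole lemma is essentially formal once the fiber subgroup is identified, and the only subtlety is the combination of torsion-freeness with the ``not presentable by products'' hypothesis to eliminate the center of $\pi_1(B)$.
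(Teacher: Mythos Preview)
Your proposal is correct and follows essentially the same approach as the paper: for part (1) you both take the image of $\pi_1(\overline{M})$ in $\pi_1(B)$ to define $B'$, lift to get the bundle $\overline{M}\to B'$, and then argue that $C(\pi_1(B))=1$ from torsion-freeness plus ``not presentable by products'' to pin down the center. The only difference is cosmetic: the paper actually sketches the proof of part (2) (pass to the finite cover of $B$ corresponding to the kernel of $\pi_1(B)\to H_1(B)\to \mathrm{Tor}\,H_1(B)$ and observe the pulled-back Euler class vanishes), whereas you treat it as a black box---but your one-line description of the mechanism is accurate.
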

\begin{proof}
 (1) Since $\pi_1(p)(\pi_1(\overline{M}))$ has finite index in $\pi_1(M)$ and $\pi_1(\pi)(\pi_1(M))=\pi_1(B)$, the image
$
 H:=\pi_1(\pi \circ p)(\pi_1(\overline{M}))
$
has finite index in $\pi_1(B)$. Let $B' \stackrel{p'}\longrightarrow B$ be the finite covering corresponding to $H$. Then $\pi \circ p$ lifts to
$\overline{M} \stackrel{\pi'}\longrightarrow B'$, which is the desired circle bundle.

If $B$ is aspherical, then the $S^1$ fiber is central in the fundamental group of $M$. If, in addition, $\pi_1(B)$ is not presentable by products, then it
has
trivial center (because it is torsion-free), and so the center of $\pi_1(M)$ is infinite cyclic. Now $\pi_1(B')$ has finite index
in $\pi_1(B)$, and so it is not presentable by products as well and therefore the center of
$\pi_1(\overline{M})$ is also infinite cyclic.  

(2) Consider the Abelianization $H_1(B) = \pi_1(B)/[\pi_1(B),\pi_1(B)]$. Since the Euler class of $M$ is torsion, the Universal Coefficient Theorem
implies that the torsion part of $H_1(B)$ is not trivial. Let now the composition
\[
 \pi_1(B) \longrightarrow H_1(B) \longrightarrow \mathrm{Tor}H_1(B),
\]
where the first map is the quotient map and the second is the projection to the torsion of $H_1(B)$. If $B' \stackrel{p'}\longrightarrow B$ is the finite
covering corresponding to the kernel of the above composition, then the pullback bundle $(p') ^*(M)$ is the product $S^1 \times B'$;
see~\cite[Prop. 3]{Bowden} for more details.
\end{proof}
 
\begin{rem}\label{r:Torsion-freeEuler}
 Conversely to part (2) of the above lemma, let $\overline{M} = S^1 \times B' \stackrel{p}\longrightarrow M$ be a finite cover, where
$B' \stackrel{p'}\longrightarrow B$ is a finite covering between the bases (the map $p'$ is covered by $p$). The Euler class of $\overline{M}$ is
trivial, that is
$e_{\overline{M}}=H^2(p';\Z)(e_M)=0 \in H^2(B',\Z)$, where $e_M \in H^2(B;\Z)$ is the Euler class of $M$. By the fact that $H^2(p';\Q)$ is injective (since $\deg(p')\neq0$), we conclude that $e_M$ is torsion.
\end{rem}

A basic ingredient of our proof is the following lemma which generalizes ~\cite[Lemma 1]{KotschickNeofytidis}:

\begin{lem}[Factorization Lemma]\label{l:KotschickNeofytidisgeneralization}
 Let $M \stackrel{\pi}\longrightarrow B$ be a non-trivial $n$-dimensional circle bundle over a closed oriented aspherical manifold $B$. Suppose
that the Euler class of $M$ is not torsion and that the center of $\pi_1(M)$ remains infinite cyclic in finite covers. Then $X \times S^1 \ngeq M$ for any closed oriented
manifold $X$.
\end{lem}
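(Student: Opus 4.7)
The plan is to assume, for contradiction, the existence of a nonzero-degree map $f\colon X\times S^1\longrightarrow M$ and ultimately derive that the Euler class $e_M$ must be torsion. I would first replace $M$ by the finite cover corresponding to $\mathrm{im}(\pi_1(f))$, so that the lifted map is $\pi_1$-surjective and still of nonzero degree. By Lemma \ref{l:propertiescirclebundles}(1) this cover is a non-trivial circle bundle over a finite cover of $B$; by Remark \ref{r:Torsion-freeEuler} its Euler class remains non-torsion; and the infinite-cyclicity of the center is preserved by hypothesis.

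Set $\Gamma_1:=\mathrm{im}(\pi_1(f\vert_X))$ and $\Gamma_2:=\mathrm{im}(\pi_1(f\vert_{S^1}))$. The subgroup $\Gamma_2$ is cyclic, commutes with $\Gamma_1$, and together they generate $\pi_1(M)$, so $\Gamma_2\subset C(\pi_1(M))\cong\Z$. The classifying space diagram of Figure \ref{f:KotschickLoeh} rules out $\Gamma_2=1$: otherwise the class $\alpha_2\in H_1(B\Gamma_2;\Q)=0$ would be zero, forcing $\deg(f)=0$. Hence $\Gamma_2\cong\Z$. The exact sequence $1\to\pi_1(S^1)\to\pi_1(M)\stackrel{\pi_*}\longrightarrow\pi_1(B)\to 1$ induces a map $C(\pi_1(M))\longrightarrow C(\pi_1(B))$ with kernel $\pi_1(S^1)$; since $C(\pi_1(M))/\pi_1(S^1)$ is finite and $\pi_1(B)$ is torsion-free, this image is trivial, so $C(\pi_1(M))=\pi_1(S^1)$. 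In particular, $\Gamma_2\subset\pi_1(S^1)$ with some finite index $k\geq 1$.

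Because $\Gamma_2\subset\ker\pi_*$ and $B$ is aspherical, the composition $\pi\circ f\colon X\times S^1\longrightarrow B$ is homotopic to $g\circ p$ for some $g\colon X\longrightarrow B$, where $p$ is the projection onto $X$. Using the homotopy lifting property of the fibration $\pi$, I would homotope $f$ to a strictly fiber-preserving map covering $g$, that is, a bundle map between the oriented $S^1$-bundles $X\times S^1\longrightarrow X$ and $M\longrightarrow B$. The standard degree formula for such a map yields $\deg(f)=\deg(g)\cdot k$, so $\deg(g)\neq 0$.

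Finally, the Gysin sequence of $M\longrightarrow B$ gives $\pi^*(e_M)=0$ in $H^2(M;\Z)$, whence
\[
 p^*g^*(e_M)=(g\circ p)^*(e_M)=(\pi\circ f)^*(e_M)=0.
\]
Injectivity of $p^*$ forces $g^*(e_M)=0$ in $H^2(X;\Z)$, and since $\deg(g)\neq 0$, the map $g^*$ is injective on rational cohomology, so $e_M$ must be torsion, contradicting the hypothesis. The crux of the argument is the identification $\Gamma_2\subset\pi_1(S^1)$, which depends essentially on the assumption that the center of $\pi_1(M)$ stays infinite cyclic in every finite cover; without this, $\Gamma_2$ could project nontrivially to $\pi_1(B)$ and the geometric reduction to a bundle map would fail.
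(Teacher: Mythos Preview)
Your proof is correct and reaches the contradiction by a route genuinely different from the paper's. Both arguments begin identically: pass to a finite cover so that $f$ is $\pi_1$-surjective, use the infinite-cyclic-center hypothesis to force $\Gamma_2\subset C(\pi_1(M))=\pi_1(S^1_{\text{fiber}})$, and then factor $\pi\circ f\simeq g\circ p$ through some $g\colon X\to B$ because $B$ is aspherical. From here the two proofs diverge. The paper works in top cohomological degree: Poincar\'e duality together with the non-torsion Euler class makes $\smile e_M\colon H^{n-3}(B;\Q)\to H^{n-1}(B;\Q)$ surjective, so the Gysin sequence yields $H^{n-1}(\pi;\Q)=0$; comparing $(\pi\circ f)^*(\omega_B)=0$ with $(g\circ p)^*(\omega_B)=\deg(g)\,p^*(\omega_X)$ forces $\deg(g)=0$, and the pullback factorization $X\times S^1\to g^*M\to M$ then gives $\deg(f)=0$. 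You instead work in degree~$2$: after homotoping $f$ to a fibre-preserving map covering $g$, multiplicativity of degree gives $\deg(f)=\pm k\cdot\deg(g)$, hence $\deg(g)\neq 0$; then the Gysin relation $\pi^*(e_M)=0$ pulls back to $g^*(e_M)=0$, and rational injectivity of $g^*$ forces $e_M$ to be torsion. The paper's version avoids the homotopy-lifting and fibre-degree bookkeeping by using the pullback bundle only at the very end; your version has the pleasant feature that the non-torsion hypothesis on $e_M$ is invoked only in the final line, making its role completely transparent. A minor remark: your separate appeal to Figure~\ref{f:KotschickLoeh} to exclude $\Gamma_2=1$ is unnecessary, since in that case the fibre degree would already be zero and the product formula would give $\deg(f)=0$ directly.
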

\begin{proof}
Since $M$ is a non-trivial circle bundle whose integer Euler class $e_M \in H^2(B;\Z)$ is not torsion, the rational Euler class of $M$ is not trivial as
well. The same property holds for every (fiber preserving) finite cover of $M$, by Lemma~\ref{l:propertiescirclebundles} and Remark
\ref{r:Torsion-freeEuler}. By Poincar\'e duality, there exists a
non-trivial class $\alpha \in H^{n-3}(B;\Q)$ so that $e_M \smile \alpha$ is a non-zero multiple of the cohomology fundamental class $\omega_B$ of $B$. Since $H^{n-1}(B;\Q) = \Q$, the Gysin sequence
\[
 \cdots \longrightarrow H^{n-3}(B;\Q) \stackrel{\smile e_M}\longrightarrow H^{n-1}(B;\Q) \stackrel{H^{n-1}(\pi)} \longrightarrow H^{n-1}(M;\Q)
\longrightarrow \cdots
\]
implies that $\ker (H^{n-1}(\pi)) = \mathrm{im} (\smile e_M) = H^{n-1}(B;\Q)$. Therefore $H^{n-1}(\pi) = 0$.

Suppose now that there exists a non-zero degree map $f \colon X \times S^1 \longrightarrow M$. After passing to a finite cover, if necessary, we may assume
that $f$ is $\pi_1$-surjective and that the center of $\pi_1(M)$ is infinite cyclic. 
The latter means that the circle fiber of $M$ represents (up to multiples) the
only central factor in $\pi_1(M)$. By the surjectivity of $\pi_1(f)$, we deduce that the composite map $\pi \circ f$ kills the homotopy class of the
$S^1$ factor
of the product $X \times S^1$, because this factor is central in $\pi_1(X \times S^1)$. Since $B$ is aspherical, we conclude that $\pi \circ f$
factors up to homotopy through the projection $p_1 \colon X \times S^1 \longrightarrow X$. In particular, there is a continuous map $g \colon X
\longrightarrow B$, so that $\pi \circ f = g \circ p_1$ up to homotopy. (We note that $X$ is not necessarily aspherical. It is, however, rationally
essential, because $f$ has non-zero degree and $M$ is aspherical.)

Let $\omega_X$ be the cohomology fundamental class of $X$. Since 
$H^{n-1}(p_1;\Q)(\omega_X) = \omega_X \in H^{n-1}(X \times S^1;\Q)$ and
$H^{n-1}(\pi;\Q)(\omega_B) = 0 \in H^{n-1}(M;\Q)$,
the homotopy equation $\pi \circ f = g \circ p_1$ implies that $g$ must be of zero degree. Let now
the pullback of $M$ under $g$:
\[
g^*M = \{ (x,y)\in X \times M \ \vert \ g(x) = \pi (y) \} \ .
\]
The map $f \colon X \times S^1 \longrightarrow M$ factors through $g^*M$ as follows:
\begin{eqnarray*}
 X \times S^1 \longrightarrow & g^*M       & \stackrel{\pi_2}{\longrightarrow} M\\
(x,t)                 \mapsto     & (x,f(x,t)) & \mapsto f(x,t) \ .
\end{eqnarray*}   
However, the degree of the pullback map $\pi_2 \colon  g^*M \longrightarrow M$ is zero, being equal to the degree of $g$, which contradicts our assumption on $\deg(f)$. This completes the proof.
\end{proof}

We now finish the proof of Theorem \ref{thmB}:

\begin{proof}[Proof of Theorem \ref{thmB}]
Since $\pi_1(M)$ is not IIPP, $M$ is a non-trivial circle bundle and, moreover, its Euler class is not torsion by Lemma \ref{l:propertiescirclebundles}
(2).
After passing to a finite cover, if necessary, suppose that there is a $\pi_1$-surjective non-zero degree map $f \colon X_1 \times X_2 \longrightarrow
M$, where $\dim X_i > 0$ and $C(\pi_1(M)) =\Z$; cf. Lemma \ref{l:propertiescirclebundles} (1). As before, there
is a short exact sequence
\[
 1 \longrightarrow \Gamma_1\cap\Gamma_2 \longrightarrow \Gamma_1 \times \Gamma_2 \stackrel{\varphi}\longrightarrow \pi_1(M) \longrightarrow 1,
\]
where $\Gamma_i := \mathrm{im}(\pi_1(f\vert_{X_i})) \subset \pi_1(M)$, $\varphi$ is the multiplication map and $\Gamma_1\cap\Gamma_2 \subset
C(\pi_1(M)) = \Z$, see Section \ref{s:notation}. 

Lemma \ref{l:notIIPPalgebraic} implies that one of the $\Gamma_i$, say $\Gamma_2$, must be infinite cyclic,
because $\pi_1(M)$ is not IIPP and torsion-free. Therefore, $B\Gamma_2 \simeq S^1$ and because the rational class
\[
 \alpha_2 := H_{\dim X_2}(B\pi_1(f\vert_{X_2})\circ c_{X_2})([X_2]) \in H_{\dim X_2}(S^1;\Q)
\]
is not trivial we conclude that $\dim X_2 = 1$, i.e. $X_2 = S^1$. Now, we have a $\pi_1$-surjective dominant map $X_1 \times S^1
\longrightarrow M$, where $C(\pi_1(M)) = \Z$. The proof follows by Lemma \ref{l:KotschickNeofytidisgeneralization}.
\end{proof}

\subsection{A characterization for circle bundles (proof of Theorem \ref{thmC})}\label{s:characterizationcb}

A main motivation for Theorem \ref{t:KotschickLoehmain} was to show that non-positively curved manifolds
which are not virtual products are not dominated by products. Actually, the property ``fundamental group presentable by products'' suffices for
domination by products for non-positively curved manifolds (of dimension higher than one) and it is equivalent to ``reducible"; cf. \cite[Theorem 4.1]{KotschickLoeh1}. Another consequence of the results of~\cite{KotschickLoeh1}, which moreover deals with manifolds that do not admit any metric of non-positive
sectional curvature (cf.~\cite{KapLeeb}), concerns fibrations whose fiber and base have fundamental groups not presentable by products:

\begin{thm}[\normalfont{\cite[Theorem 5.1]{KotschickLoeh1}}]\label{t:fiberbundles}
 Let $F \longrightarrow M \stackrel{\pi}\longrightarrow B$ be a fiber bundle whose fiber $F$ and base $B$ are closed oriented aspherical manifolds with
fundamental groups not presentable by products. Then $M$ is dominated by products if and only if it is a virtual product $F' \times B'$, where $F'$ and
$B'$ are finite covers of $F$ and $B$ respectively.
\end{thm}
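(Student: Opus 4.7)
The reverse direction is immediate, so assume $M$ is dominated by a product $X_1\times X_2$ via a map $f$ of non-zero degree; the goal is to show $M$ is virtually a product. After replacing $M$ by the finite cover corresponding to $\mathrm{im}(\pi_1(f))$ (a finite index subgroup since $\deg f\neq 0$) and lifting $f$, we may assume $f$ is $\pi_1$-surjective. This finite cover is again a fiber bundle with closed oriented aspherical fiber and base, whose fundamental groups remain not presentable by products (this property passes to finite index subgroups). Following the notation of Section \ref{s:notation}, set $\Gamma_i=\pi_1(f|_{X_i})(\pi_1(X_i))$; these are element-wise commuting subgroups whose union generates $\pi_1(M)$, and by Theorem \ref{t:KotschickLoehmain} both are infinite, with $\varphi\colon\Gamma_1\times\Gamma_2\twoheadrightarrow\pi_1(M)$ the multiplication map.

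The plan is to apply the ``not presentable by products'' hypothesis twice, first against the base and then against the fiber. Use the homotopy long exact sequence of the fibration to get $1\to\pi_1(F)\to\pi_1(M)\stackrel{p}\to\pi_1(B)\to 1$. The composition $p\circ\varphi\colon\Gamma_1\times\Gamma_2\to\pi_1(B)$ is surjective; since $\pi_1(B)$ is torsion-free (being the fundamental group of a closed aspherical manifold) and not presentable by products, one of the images $p(\Gamma_i)$ is finite, hence trivial. Say $p(\Gamma_2)=1$, so $\Gamma_2\subset\pi_1(F)$ and $p(\Gamma_1)=\pi_1(B)$. Set $K_1=\Gamma_1\cap\pi_1(F)$. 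For any $x\in\pi_1(F)\subset\pi_1(M)$, factor $x=g_1g_2$ with $g_i\in\Gamma_i$; applying $p$ gives $p(g_1)=1$, so $g_1\in K_1$, whence $\pi_1(F)=K_1\cdot\Gamma_2$ with $K_1$ and $\Gamma_2$ commuting element-wise. Again by torsion-freeness and ``not presentable by products'' applied to $\pi_1(F)$, one of $K_1,\Gamma_2$ is trivial. The case $\Gamma_2=1$ contradicts the infinitude of the presenting images, so $K_1=1$. Then $\Gamma_1\cap\Gamma_2\subset K_1=1$, making $\varphi$ injective, and we obtain
\[
\pi_1(M)\cong\Gamma_1\times\Gamma_2\cong\pi_1(B)\times\pi_1(F).
\]

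To upgrade this algebraic splitting to the topological conclusion, note that $M$ itself is aspherical: the homotopy long exact sequence combined with asphericality of $F$ and $B$ forces all higher homotopy groups of $M$ to vanish. Consequently $M$ and $B\times F$ are both $K(\pi_1(B)\times\pi_1(F),1)$-spaces and are therefore homotopy equivalent. Unwinding the finite coverings taken at the start, the original $M$ admits a finite cover homotopy equivalent to a product $F'\times B'$ of finite covers of $F$ and $B$. The principal technical burden of the proof is the careful bookkeeping of the several passages to finite index subgroups and verifying the fiber bundle hypotheses survive them intact; the conceptual heart is the two-step algebraic argument above, where the ``not presentable by products'' condition is levered first against $\pi_1(B)$ to confine $\Gamma_2$ inside $\pi_1(F)$, and then against $\pi_1(F)$ to collapse the intersection $K_1$ and force the direct product splitting.
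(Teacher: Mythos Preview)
The paper does not supply its own proof of this statement; it is quoted verbatim from \cite[Theorem 5.1]{KotschickLoeh1} and used as background. So there is no in-paper proof to compare against directly.

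That said, your argument is correct and is essentially the proof given in \cite{KotschickLoeh1}: set up the commuting subgroups $\Gamma_1,\Gamma_2$ via the Kotschick--L\"oh machinery of Section~\ref{s:notation}, project to $\pi_1(B)$ to trap one factor inside $\pi_1(F)$, then repeat the argument inside $\pi_1(F)$ to kill the intersection and obtain the product splitting. Two minor points of bookkeeping: the infinitude of both $\Gamma_i$ is not a consequence of Theorem~\ref{t:KotschickLoehmain} as stated (that theorem is a non-domination criterion) but of the argument preceding it in Section~\ref{s:notation}, namely the non-vanishing of the classes $\alpha_i$, which requires $M$ to be rationally essential---and it is, being aspherical. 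Also, your conclusion is that the finite cover of $M$ is \emph{homotopy equivalent} to $F'\times B'$; since $M$ is aspherical this is the correct level of generality, and it matches how the paper uses the result (compare the proof of the equivalence $(2)\Leftrightarrow(3)$ in Theorem~\ref{thmC}).
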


%In particular, we have the following characterization in dimension four:

\begin{cor}[\normalfont{\cite[Cor. 5.3]{KotschickLoeh1}}]\label{c:KLsurfaceb}
 Let $M$ be a closed oriented $4$-manifold which is the total space of a surface bundle whose fiber $F$ and base $B$ are both hyperbolic surfaces. Then
the following are equivalent:
\begin{itemize}
 \item[\normalfont{(1)}] $M$ is dominated by a non-trivial product of closed oriented manifolds;
 \item[\normalfont{(2)}] $M$ is virtually diffeomorphic to a trivial surface bundle;
 \item[\normalfont{(3)}] $\pi_1(M)$ is reducible;
 \item[\normalfont{(4)}] $\pi_1(M)$ is presentable by products.
\end{itemize}
\end{cor}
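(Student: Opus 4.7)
The plan is to close the four conditions into a short cycle, with all non-trivial content concentrated in the step $(4) \Rightarrow (3)$. The equivalence $(1) \Leftrightarrow (2)$ is immediate from Theorem \ref{t:fiberbundles}, since closed hyperbolic surface groups are not presentable by products: in a torsion-free hyperbolic group the centralizer of any non-trivial element is infinite cyclic, so two commuting infinite subgroups whose product has finite index would make the group virtually abelian, which is incompatible with the hyperbolicity of a closed surface group of genus $\geq 2$. The implications $(2) \Rightarrow (3)$ and $(3) \Rightarrow (4)$ are tautological (see Example \ref{ex:examplesPPtf}), and $(3) \Rightarrow (1)$ is a special case of Theorem \ref{thmA}. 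Hence only $(4) \Rightarrow (3)$ requires real argument.

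To prove $(4) \Rightarrow (3)$, assume $\pi_1(M)$ is presentable by products. After passing to a finite cover $\overline{M} \to M$ and replacing the presenting subgroups by their images (see Section \ref{s:preliminariesPP}), we may assume that $\pi_1(\overline{M})$ contains element-wise commuting infinite subgroups $\Gamma_1, \Gamma_2$ with $\Gamma_1 \cup \Gamma_2$ generating $\pi_1(\overline{M})$. The surface bundle structure restricted to this cover yields a short exact sequence
\[ 1 \longrightarrow \pi_1(\overline{F}) \longrightarrow \pi_1(\overline{M}) \stackrel{p}\longrightarrow \pi_1(\overline{B}) \longrightarrow 1, \]
where $\overline{F}$ and $\overline{B}$ are closed hyperbolic surfaces (finite covers of $F$ and $B$). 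The projections $p(\Gamma_1), p(\Gamma_2)$ commute element-wise and generate $\pi_1(\overline{B})$, so the fact that $\pi_1(\overline{B})$ is torsion-free and not presentable by products forces one of them to be trivial; after relabelling, $\Gamma_2 \subset \pi_1(\overline{F})$. By Lemma \ref{l:KotschickLoehproperiesPP}, the intersection $\Gamma_1 \cap \Gamma_2$ is contained in $C(\pi_1(\overline{M}))$; and since $\pi_1(\overline{F})$ is normal in $\pi_1(\overline{M})$, any element of $C(\pi_1(\overline{M})) \cap \pi_1(\overline{F})$ lies in $C(\pi_1(\overline{F}))$, which is trivial because $\pi_1(\overline{F})$ is a closed hyperbolic surface group. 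Therefore $\Gamma_1 \cap \Gamma_2 = 1$ and the multiplication map $\Gamma_1 \times \Gamma_2 \longrightarrow \pi_1(\overline{M})$ becomes an isomorphism of two infinite groups, so $\pi_1(M)$ is reducible.

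The main obstacle is orchestrating the interplay of two distinct features of hyperbolic surface groups within the step $(4) \Rightarrow (3)$: one first uses that $\pi_1(\overline{B})$ is not presentable by products in order to trap $\Gamma_2$ inside the fiber subgroup, and only then does one use that $\pi_1(\overline{F})$ is normal and centerless to dispose of the intersection $\Gamma_1 \cap \Gamma_2$ via Lemma \ref{l:KotschickLoehproperiesPP}. A minor bookkeeping point is to verify that, after passing to the cover $\overline{M}$, the restricted exact sequence still has a closed hyperbolic surface as fiber and as base---this is automatic since both $\pi_1(\overline{F})$ and $\pi_1(\overline{B})$ arise as finite index subgroups of $\pi_1(F)$ and $\pi_1(B)$, respectively. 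Once this is done, the cycle closes via the already-established Theorems \ref{t:fiberbundles} and \ref{thmA}.
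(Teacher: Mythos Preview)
Your proof is correct. The paper does not supply its own proof of this corollary---it is quoted verbatim from \cite[Cor.~5.3]{KotschickLoeh1} as an immediate consequence of Theorem~\ref{t:fiberbundles} together with the fact (recorded in the subsequent Remark) that hyperbolic surface groups are not presentable by products. Your write-up fills in exactly the details one would expect: the loop $(1)\Leftrightarrow(2)$ from Theorem~\ref{t:fiberbundles}, the trivial implications $(2)\Rightarrow(3)\Rightarrow(4)$, and the only substantive step $(4)\Rightarrow(3)$, where you push $\Gamma_1,\Gamma_2$ down to the base to trap one factor in the fiber and then kill $\Gamma_1\cap\Gamma_2$ using the centerless-ness of the fiber group. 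This is precisely the mechanism the paper itself uses in its proof of the analogous Theorem~\ref{thmC} for circle bundles (and is the argument underlying \cite[Theorem~5.1]{KotschickLoeh1}), so your approach is in full agreement with the paper's. The one place you deviate slightly is invoking Theorem~\ref{thmA} for $(3)\Rightarrow(1)$; this is fine within the present paper, though of course the original \cite{KotschickLoeh1} did not have Theorem~\ref{thmA} and would instead close the loop via $(4)\Rightarrow(3)\Rightarrow(2)\Rightarrow(1)$, the last step needing the asphericity and the diffeomorphism classification of surface bundles over surfaces by $\pi_1$.
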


\begin{rem}
The fact that hyperbolic groups are not presentable by products is proved in~\cite{KotschickLoeh1}. Moreover, we note that $4$-manifolds satisfying one (and therefore every) property in the above corollary constitute the class of closed reducible $\mathbb{H}^2 \times \mathbb{H}^2$-manifolds; cf. Section \ref{ss:enumeration4-mfds}.
\end{rem}

Now, if we replace the fiber $F$ by $S^1$ in Theorem \ref{t:fiberbundles}, then $\pi_1(M)$ is presentable by products having infinite cyclic center. Theorem \ref{thmC}, which is a partial converse of Theorem \ref{thmB}, says that the conclusion of Theorem \ref{t:fiberbundles} still holds when the fiber is $S^1$. However, domination by products is now equivalent to the conditions ``$\pi_1(M)$ IIPP'' and ``$\pi_1(M)$ reducible''; compare with the equivalence between $(3)$ and $(4)$ of Corollary \ref{c:KLsurfaceb}. 

\begin{proof}[Proof of Theorem \ref{thmC}]
 Since $B$ is aspherical, $M$ is also aspherical and its fundamental group fits into a short exact sequence
\begin{equation}
 1 \longrightarrow \pi_1(S^1) \longrightarrow \pi_1(M) \stackrel{\pi_1(\pi)}\longrightarrow \pi_1(B) \longrightarrow 1,
\end{equation}
where $\pi_1(S^1)$ is in the center of $\pi_1(M)$. Moreover, $\pi_1(B)$ has trivial center, because it is torsion-free and not presentable by products.
Thus $C(\pi_1(M)) = \pi_1(S^1) = \Z$.

Suppose that there is a non-zero degree map $f \colon X_1 \times X_2 \longrightarrow M$. After passing to a finite cover, if necessary, we may
assume that $f$ is $\pi_1$-surjective. (The finite cover of $M$ is a circle bundle with infinite cyclic center, by Lemma
\ref{l:propertiescirclebundles} (1).) As before, we have a short exact sequence
\begin{eqnarray*}
 1 \longrightarrow \Gamma_1\cap\Gamma_2 \longrightarrow \Gamma_1 \times \Gamma_2 \stackrel{\varphi}\longrightarrow \pi_1(M) \longrightarrow 1,
\end{eqnarray*}
where  $\Gamma_i := \mathrm{im}(\pi_1(f\vert_{X_i})) \subset \pi_1(M)$ and $\Gamma_1\cap\Gamma_2 \subset C(\pi_1(M)) = \Z$.
Moreover, we obtain two non-trivial rational homology classes
\begin{eqnarray*}
 \alpha_i := H_{\dim X_i}(B\pi_1(f\vert_{X_i})\circ c_{X_i})([X_i]) \neq 0 \in H_{\dim X_i}(B\Gamma_i;\Q),
\end{eqnarray*}
see Sections \ref{s:notation} and \ref{s:preliminariesPP} for the details. 

The composite homomorphism 
\begin{eqnarray*}
\Gamma_1 \times \Gamma_2 \stackrel{\varphi}\longrightarrow \pi_1(M) \stackrel{\pi_1(\pi)}\longrightarrow \pi_1(B) \cong \pi_1(M)/\pi_1(S^1)
\end{eqnarray*}
maps one of the $\Gamma_i$, say $\Gamma_1$, to the neutral element of $\pi_1(B)$, because $\pi_1(B)$ is not presentable by products and torsion-free. This
means that $\Gamma_1$ is contained in $C(\pi_1(M)) = \pi_1(S^1) = \Z$ and it is therefore isomorphic to $C(\pi_1(M)) = \Z$. In
particular, $B\Gamma_1 \simeq B\Z =
S^1$ and so the non-vanishing of $\alpha_1 \in H_{\dim X_1}(S^1;\Q)$
implies that $\dim X_1 \leq 1$. Since $\dim X_1 > 0$, we have that $X_1 = S^1$, i.e. $S^1 \times X_2 \geq M$. It follows by
Lemma \ref{l:KotschickNeofytidisgeneralization} that $M$ is a virtual product and, more precisely, that it is finitely covered by a product $S^1 \times B'$
for
some finite cover $B' \longrightarrow B$. Thus (1)
implies (2). The converse is trivially true and so (1) is equivalent to (2).

Next, we show that the properties (2) and (3) are equivalent. Obviously (2) implies (3). Assume now that $\pi_1(M)$ is reducible. Then there exists
a finite cover $M' \longrightarrow M$ so that $\pi_1(M')$ is isomorphic to a direct product $\Delta_1 \times \Delta_2$, where $\Delta_i$ are non-trivial (and therefore infinite)
subgroups of $\pi_1(M')$. The cover $M'$ is a circle bundle over a finite cover $B'$ of $B$, where $\pi_1(B')$ is not presentable by products being a
finite index subgroup of $\pi_1(B)$; cf. Lemma \ref{l:propertiescirclebundles} (1). We therefore obtain a short exact sequence
\begin{equation}\label{eq.vproductsequence}
 1 \longrightarrow \pi_1(S^1) \longrightarrow \Delta_1 \times \Delta_2 \longrightarrow \pi_1(B') \longrightarrow 1,
\end{equation}
where $\pi_1(S^1) = C(\pi_1(M')) \cong C(\Delta_1) \times C(\Delta_2)$. 
Since $\pi_1(B')$ is not presentable by products and torsion-free, one of the $\Delta_i$, say $\Delta_1$, maps trivially to
$\pi_1(B') \cong \pi_1(M')/\pi_1(S^1)$ in (\ref{eq.vproductsequence}). Thus $\Delta_1 \subset \pi_1(S^1)$ (and so $\Delta_1$ is isomorphic to $\Z$) and
$\Delta_2$ surjects onto $\pi_1(B')$. Moreover, $\pi_1(S^1)$ maps trivially to $\Delta_2$, otherwise $\Delta_2$ would have finite
index in $\pi_1(M')$, which is impossible, because $\pi_1(M') \cong \Delta_1 \times \Delta_2$ and both $\Delta_i$ are infinite. Therefore $\Delta_2$ maps isomorphically onto $\pi_1(B')$. We have now proved
that $\pi_1(M') \cong \pi_1(S^1 \times B')$ and so $M'$ is homotopy equivalent to $S^1 \times B'$. Thus (3) implies (2).

Finally, the equivalence between $(3)$ and $(4)$ follows from the more general group-theoretic Theorem \ref{thmD}, whose proof is given in the upcoming section.
\end{proof}

\begin{rem}
 An alternative argument for the last step in the proof of the implication $(3) \Rightarrow (2)$ is the following: 
 Having that $\Delta_1$ maps trivially to $\pi_1(B') \cong \pi_1(M')/\pi_1(S^1)$, we conclude that $\pi_1(S^1)$ maps trivially to $\Delta_2$ because the center of $\pi_1(M') \cong \Delta_1 \times \Delta_2$ is infinite cyclic, isomorphic to $\pi_1(S^1)$. Actually, taking for granted that the circle fiber of $M'$ is the only central factor in $\pi_1(M')$, we can relax the condition ``not presentable by products'' for the fundamental group of the base $B'$ to ``irreducible''.
 
Also, note that after showing the implication $(1)\Rightarrow (2)$ and since the implication $(2)\Rightarrow (3)$ is trivial, we can deduce the equivalence of $(1), (2)$ and $(3)$ by Theorem \ref{thmA} (which gives the implication $(3)\Rightarrow (1)$).
 \end{rem}

This discussion yields a topological example of groups not IIPP in any dimension:

\begin{cor}
 If $M$ is a circle bundle with non-trivial rational Euler class over a closed aspherical manifold $B$ so that $\pi_1(B)$ is not presentable by products, then $\pi_1(M)$ is not IIPP.
\end{cor}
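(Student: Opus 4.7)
The plan is to argue by contrapositive and invoke the equivalence $(4)\Leftrightarrow(2)$ of Theorem \ref{thmC} together with Remark \ref{r:Torsion-freeEuler}. That is, assume $\pi_1(M)$ is IIPP and derive that the rational Euler class of $M$ must vanish, contradicting the hypothesis.

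First I would note that the base $B$ is closed and aspherical and that $\pi_1(B)$ is not presentable by products, so $M \stackrel{\pi}\longrightarrow B$ satisfies the standing hypotheses of Theorem \ref{thmC}. Suppose for contradiction that $\pi_1(M)$ is IIPP. Then by the implication $(4)\Rightarrow(2)$ of Theorem \ref{thmC}, $M$ is finitely covered by a product $S^1\times B'$, where $B'\longrightarrow B$ is a finite cover.

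The main step is to extract from this finite covering the vanishing of the rational Euler class. Let $p\colon S^1\times B'\longrightarrow M$ denote this finite cover, which covers a finite cover $p'\colon B'\longrightarrow B$ of the bases. As recorded in Remark \ref{r:Torsion-freeEuler}, the Euler class of the trivial bundle $S^1\times B'$ is zero, so
\[
0 = e_{S^1\times B'} = H^2(p';\Z)(e_M) \in H^2(B';\Z).
\]
Since $p'$ has non-zero degree, the map $H^2(p';\Q)$ is injective, and therefore $e_M$ is a torsion class in $H^2(B;\Z)$. This contradicts the assumption that $M$ has non-trivial rational Euler class.

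I do not expect any serious obstacle: the corollary is essentially a direct repackaging of Theorem \ref{thmC} combined with the elementary fact (Remark \ref{r:Torsion-freeEuler}) that a circle bundle virtually trivialized by a finite base-cover has torsion Euler class. The one subtlety to double-check is that the virtual trivialization in $(2)$ of Theorem \ref{thmC} is indeed fiber-preserving over a finite cover of $B$, so that pulling back the Euler class is legitimate; but this is exactly part $(1)$ of Lemma \ref{l:propertiescirclebundles}, which guarantees that any finite cover of $M$ is a circle bundle over a finite cover of $B$.
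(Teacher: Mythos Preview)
Your proof is correct and follows essentially the same route as the paper's. The paper argues via $(4)\Leftrightarrow(3)$ and then $(3)\Leftrightarrow(2)$ of Theorem \ref{thmC}, whereas you go directly from $(4)$ to $(2)$; both then invoke Remark \ref{r:Torsion-freeEuler} to obtain the contradiction with the non-torsion Euler class, and your explicit mention of Lemma \ref{l:propertiescirclebundles} (1) for the fiber-preserving issue is, if anything, slightly more careful than the paper's own citation.
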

\begin{proof}
Since $\pi_1(B)$ is not presentable by products, $\pi_1(M)$ is IIPP if and only if it is reducible, by the equivalence between $(3)$ and $(4)$ in Theorem
\ref{thmC} (or by Theorem \ref{thmD} below). However, $\pi_1(M)$ is not reducible, otherwise $M$ would be
covered
by $S^1 \times B'$, for some finite cover $B' \longrightarrow B$ (by the equivalence between $(2)$ and $(3)$ of Theorem
\ref{thmC}), which is impossible because the Euler class of $M$ is not torsion; cf. Remark \ref{r:Torsion-freeEuler}. 
\end{proof}

\section{The IIPP property as a criterion for reducibility (proof of Theorem \ref{thmD})}

In this section we show that the properties ``IIPP" and ``reducible" are equivalent for a group $\Gamma$, whenever $\Gamma/C(\Gamma)$ is not presentable by products.

\begin{proof}[Proof of Theorem \ref{thmD}]
Since every reducible group is IIPP, we only need to show that the converse is also true when the quotient of our group by its center is not presentable by products. 

Let $\Gamma$ be an IIPP group such that $\Gamma/C(\Gamma)$ is not presentable by products. Since $\Gamma$ is IIPP, there exists a finite index subgroup $\overline{\Gamma}\subset\Gamma$ and two element-wise commuting subgroups $\Gamma_1,\Gamma_2\subset\overline{\Gamma}$ of infinite index such that $\overline{\Gamma}=\Gamma_1\Gamma_2$. Thus we obtain the following composite surjective homomorphism
\[
\Gamma_1\times\Gamma_2 \longrightarrow \overline{\Gamma} \longrightarrow \overline{\Gamma}/\overline{\Gamma}\cap C(\Gamma),
\]
whose image $\overline{\Gamma}/\overline{\Gamma}\cap C(\Gamma)$ is of finite index in $\Gamma/C(\Gamma)$. In particular, $\overline{\Gamma}/\overline{\Gamma}\cap C(\Gamma)$ is not presentable by products. Thus, for one of the $\Gamma_i$, say for $\Gamma_2$, the quotient $\Gamma_2/\Gamma_2\cap C(\Gamma)$ is finite. Since $\Gamma_2\cap C(\Gamma)$ is central in $\Gamma_2$, we deduce that $\Gamma_2$ is virtually $C(\Gamma_2)$, in particular it is virtually Abelian. 

We claim that $[\overline{\Gamma}:\Gamma_1C(\Gamma_2)]<\infty$. First, we have that
\begin{eqnarray}\label{eq:Gammabar}
\overline{\Gamma}=\Gamma_1\Gamma_2=\Gamma_1C(\Gamma_2)\Gamma_2.
\end{eqnarray}
Next, we observe that 
\begin{eqnarray}\label{eq:product-center}
\Gamma_1C(\Gamma_2)\cap\Gamma_2=C(\Gamma_2).
\end{eqnarray}
Indeed, on the one hand it is clear that $C(\Gamma_2)\subset\Gamma_1C(\Gamma_2)\cap\Gamma_2$. On the other hand, every element in $\Gamma_1C(\Gamma_2)\cap\Gamma_2$ is central in $\Gamma_2$, because it belongs to $\Gamma_2$ and the $\Gamma_i$ commute element-wise. By (\ref{eq:Gammabar}), (\ref{eq:product-center}) and since $\Gamma_2/C(\Gamma_2)$ is finite, we obtain
\[
[\overline{\Gamma}:\Gamma_1C(\Gamma_2)]=[\Gamma_2:\Gamma_1C(\Gamma_2)\cap\Gamma_2]=[\Gamma_2:C(\Gamma_2)]<\infty,
\]
as claimed.

Let now the presentation of $\Gamma_1C(\Gamma_2)$ by
\begin{eqnarray}\label{eq:productpresentation}
\Gamma_1\cap C(\Gamma_2)\longrightarrow\Gamma_1\times C(\Gamma_2)\longrightarrow\Gamma_1 C(\Gamma_2).
\end{eqnarray}
Since $[\overline{\Gamma}:\Gamma_1]=\infty$ and $[\overline{\Gamma}:\Gamma_1C(\Gamma_2)]<\infty$, we conclude that 
\[
[C(\Gamma_2):\Gamma_1\cap C(\Gamma_2)]=[\Gamma_1C(\Gamma_2):\Gamma_1]=\infty.
\]
Thus $\Gamma_1C(\Gamma_2)$ is isomorphic to the product $\Gamma_1\times (C(\Gamma_2)/\Gamma_1\cap C(\Gamma_2))$, where the quotient group $C(\Gamma_2)/\Gamma_1\cap C(\Gamma_2)$ is Abelian of positive rank.
\end{proof}

Theorem \ref{thmD} proves in particular the equivalence between $(3)$ and $(4)$ in Theorem \ref{thmC}, for circle bundles over aspherical manifolds with fundamental groups not presentable by products.

As pointed out in the introduction, the property IIPP on a group $\Gamma$ is not anymore a criterion for reducibility of $\Gamma$ if the quotient $\Gamma/C(\Gamma)$ is presentable by products. More precisely, we have seen in Example \ref{ex:Heisenberg} that the $5$-dimensional Heisenberg group $H_5$ -- whose quotient $H_5/C(H_5)$ is $\Z^4$ -- is irreducible and not IIPP. This nilpotent group is realized as the fundamental group of a circle bundle over $T^4$. As we shall see in the next section (cf. Theorem \ref{thmE}), dimension four is the sharp dimension in which irreducible fundamental groups of solvable manifolds are not IIPP.

We note that the center of $\Gamma$ in Theorem \ref{thmD} can be assumed to be infinite, otherwise $\Gamma$ is trivially not presentable by products. At the other end, for groups whose every finite index subgroup has finite center, the notions ``presentable by products'' and ``IIPP'' are equivalent:

\begin{prop}
 If every subgroup of finite index in $\Gamma$ has finite center, then $\Gamma$ is presentable by products if and only if it is IIPP. 
\end{prop}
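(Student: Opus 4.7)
The forward implication is immediate, since IIPP is a stronger condition than presentable by products by definitions \ref{d:PP} and \ref{d:IIPP}. So the whole content is the converse: assuming every finite-index subgroup of $\Gamma$ has finite center, we must show that if $\Gamma$ is presentable by products then it is IIPP. I will argue by contradiction: assume $\Gamma$ is presentable by products but not IIPP, and derive that one of the presenting factors is finite, contradicting the definition of presentability by products.

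First, pick any presentation $\varphi\colon \Gamma_1\times\Gamma_2\longrightarrow\Gamma$ onto a finite-index subgroup with both $\varphi(\Gamma_i)$ infinite. As explained in Section \ref{s:preliminariesPP}, we can replace $\Gamma_i$ by $\varphi(\Gamma_i)$ and $\Gamma$ by the finite-index subgroup $\mathrm{im}(\varphi)=\Gamma_1\Gamma_2$, so that $\varphi$ becomes the multiplication map of two element-wise commuting subgroups $\Gamma_1,\Gamma_2\subset\Gamma$. This replacement preserves the hypothesis, because a finite-index subgroup of a finite-index subgroup of the original group still has finite index there, and hence still has finite center. Because $\Gamma$ is not IIPP, in this new presentation at least one factor, say $\Gamma_1$, must have finite index in $\Gamma$.

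Now I apply the structural results of Lemma \ref{l:KotschickLoehproperiesPP} and the short exact sequence (\ref{eq:PPprojections}). On the one hand, $\Gamma_1\cap\Gamma_2\subset C(\Gamma)$, and $C(\Gamma)$ is finite by hypothesis (since $\Gamma=\Gamma_1\Gamma_2$ is itself a finite-index subgroup of the original ambient group). On the other hand, the sequence
\[
1\longrightarrow\Gamma_1\longrightarrow\Gamma\longrightarrow\Gamma_2/(\Gamma_1\cap\Gamma_2)\longrightarrow 1
\]
combined with $[\Gamma:\Gamma_1]<\infty$ shows that $\Gamma_2/(\Gamma_1\cap\Gamma_2)$ is finite. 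These two finiteness statements together force $\Gamma_2$ to be finite, which contradicts the assumption that $\varphi(\Gamma_2)=\Gamma_2$ is infinite.

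The main (mild) obstacle is bookkeeping: one must check that the reduction to multiplication-type presentations of Section \ref{s:preliminariesPP} is compatible with the finite-center-in-finite-index hypothesis. This is automatic, since the hypothesis is inherited by every finite-index subgroup. All the real work is then done by Lemma \ref{l:KotschickLoehproperiesPP}(1), which places $\Gamma_1\cap\Gamma_2$ into the (finite) center, and by the standard short exact sequence expressing $\Gamma_2/(\Gamma_1\cap\Gamma_2)$ as a quotient of $\Gamma$ by the finite-index subgroup $\Gamma_1$.
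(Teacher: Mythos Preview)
Your proof is correct and follows essentially the same approach as the paper's: both use Lemma \ref{l:KotschickLoehproperiesPP}(1) to place $\Gamma_1\cap\Gamma_2$ in the finite center, and both invoke the short exact sequence (\ref{eq:PPprojections}) to relate $[\Gamma:\Gamma_i]$ with $|\Gamma_j/(\Gamma_1\cap\Gamma_2)|$. The only cosmetic difference is that you argue by contradiction (finite index for $\Gamma_1$ forces $\Gamma_2$ finite), whereas the paper argues directly (both $\Gamma_i$ infinite forces both to have infinite index); these are contrapositives of one another.
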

\begin{proof}
 It suffices to show that presentability by products implies IIPP. Suppose that $\Gamma_1$, $\Gamma_2$ are commuting infinite subgroups of $\Gamma$ and
that there is a short exact sequence
\[
 1 \longrightarrow \Gamma_1\cap\Gamma_2 \longrightarrow \Gamma_1 \times \Gamma_2 \stackrel{\varphi}\longrightarrow \Gamma \longrightarrow 1,
\]
where $\varphi$ is the multiplication homomorphism and $\Gamma_1 \cap \Gamma_2$ lies in the center of $\Gamma$; cf. Lemma \ref{l:KotschickLoehproperiesPP}.
Since $C(\Gamma)$ is finite, we deduce that $\Gamma_1 \cap \Gamma_2$ is also finite and so it has infinite index in both $\Gamma_i$. The proof now
follows by the short exact sequence (\ref{eq:PPprojections}) in the proof of Lemma \ref{l:finitelygeneratedcenter}.
\end{proof}

\section{Fundamental groups of geometric manifolds in low dimensions \\(proof of Theorem \ref{thmE})}\label{s:PP&IIPP}

After the characterization of groups (not) IIPP in the preceding section (whose topology fits in the concept of Theorem \ref{thmC}), we now give further non-trivial examples of groups presentable by products but not IIPP, which fit in the concept of Theorem \ref{thmB} as well. These examples include irreducible fundamental groups of low-dimensional solvable manifolds with infinite center. To this end, we characterize in terms of the IIPP property the fundamental groups of all aspherical geometric manifolds in dimensions $\leq 4$. After Theorem \ref{thmD}, the most prominent examples in the present section (that are not covered by Theorem \ref{thmD}) will be irreducible fundamental groups of nilpotent manifolds, because the quotients of these groups by their center are still presentable by products (being again nilpotent and torsion-free). We begin this section with a brief review of Thurston's geometries.

\subsection{Enumeration of the low-dimensional geometries}\label{ss:enumeration4-mfds}

Let $\mathbb{X}^n$ be a complete simply connected $n$-dimensional Riemannian manifold. We say that a closed manifold $M$ is an
{\em $\mathbb{X}^n$-manifold}, or that {\em $M$ possesses the $\mathbb{X}^n$ geometry} in the sense of Thurston, if it is diffeomorphic to a quotient of
$\mathbb{X}^n$ by a lattice $\Gamma$ in the group of isometries of $\mathbb{X}^n$ (acting effectively and transitively). The group $\Gamma$ denotes the
fundamental group of $M$. We say that $\mathbb{X}^n$ and $\mathbb{Y}^n$ are the same geometries if there exists a diffeomorphism $\psi \colon \mathbb{X}^n
\longrightarrow \mathbb{Y}^n$ and an isomorphism $\mathrm{Isom}(\mathbb{X}^n) \longrightarrow \mathrm{Isom}(\mathbb{Y}^n)$ mapping each $g \in \mathrm{Isom}(\mathbb{X}^n)$ to $\psi \circ g \circ \psi^{-1} \in \mathrm{Isom}(\mathbb{Y}^n)$.

In dimension 1, the circle is the only closed manifold being a quotient of the real line $\R$. In dimension 2, a closed surface possesses one of the geometries $S^2$, $\R^2$ or $\mathbb{H}^2$. In dimension 3, Thurston~\cite{Thurstonbook} proved that there exist eight (homotopically unique) geometries, namely the geometries $\mathbb{H}^3$, $Sol^3$,
$\widetilde{SL_2}$, $\mathbb{H}^2 \times \R$, $Nil^3$, $\R^3$, $S^2 \times \R$ and $S^3$ (see also~\cite{Scott:3-mfds}). 

The classification of the 4-dimensional geometries is due to
Filipkiewicz~\cite{Filipkiewicz}. According to that, there exist
eighteen geometries in dimension four with compact representatives. There is an additional geometry which, however, cannot be realized by any compact $4$-manifold. Here, we deal only with the aspherical geometries, because the non-aspherical ones are not interesting for
domination by products. Namely, the non-aspherical geometries are either products of a sphere with a non-compact factor ($\mathbb{H}^2 \times S^2$, $\R^2\times S^2$, $S^3 \times \R$), or compact themselves ($S^2 \times S^2$, $\mathbb{CP}^2$, $S^4$), and all of their representatives are dominated by products \cite{Hillman,KotschickLoeh1,Neofytidis}.

\subsection*{Enumeration of the aspherical geometries in dimension four}

We enumerate the aspherical 4-dimensional geometries, following Wall's papers~\cite{Wall:geom4-mfds1} and~\cite{Wall:geom4-mfds2}. Our list is adapted to the domination-by-products question, and this list will be used as an organizing principle; see Table \ref{table:4geom}.

\begin{table}
\centering
{\small
\begin{tabular}{c|c}
Type & Geometry $\mathbb{X}^4$\\
\hline
& \\
Hyperbolic & $\mathbb{H}^4$, $\mathbb{H}^2(\mathbb{C})$\\&\\
          & $\mathbb{H}^3\times\mathbb{R}$, $Sol^3\times\R$, \\
Product   & $\widetilde{SL_2}\times\mathbb{R}$, $Nil^3\times\mathbb{R}$, \\
          & $\mathbb{H}^2\times\mathbb{R}^2$, $\R^4$, \\
          & $\mathbb{H}^2\times\mathbb{H}^2$ \\&\\
Solvable    & $Nil^4$, \\
non-product & $Sol^4_{m \neq n}$, $Sol^4_0$\\
            & $Sol^4_1$\\
\empty\\
\end{tabular}}
\caption{{\small The $4$-dimensional aspherical Thurston geometries with compact representatives.}}\label{table:4geom}
\end{table}

\medskip

\noindent{{\bf Hyperbolic geometries.}} There exist two aspherical irreducible symmetric geometries, namely the real and the complex hyperbolic, denoted
by $\mathbb{H}^4$ and $\mathbb{H}^2(\C)$ respectively.

\medskip

\noindent{{\bf Product geometries.}} Seven of the aspherical geometries are products of lower dimensional geometries: $\mathbb{H}^3 \times \R$,
$Sol^3 \times \R$,
$\widetilde{SL_2} \times \R$, $Nil^3 \times \R$, $\mathbb{H}^2 \times \R^2$, $\R^4$ and $\mathbb{H}^2 \times \mathbb{H}^2$. Closed manifolds possessing a geometry of type $\mathbb{X}^3 \times \R$ satisfy the following property:

\begin{thm}[\normalfont{\cite[Sections 8.5 and 9.2]{Hillman}}]\label{t:hillmanproducts}
 Let $\mathbb{X}^3$ be a $3$-dimensional aspherical geometry. A closed $4$-manifold possessing the geometry $\mathbb{X}^3 \times \R$ is finitely
covered by a product $N \times S^1$, where $N$ is a closed oriented $3$-manifold carrying the geometry $\mathbb{X}^3$.
\end{thm}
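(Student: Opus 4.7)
The plan is to realize $M$ as a quotient $\Gamma \backslash (\mathbb{X}^3 \times \R)$ with $\Gamma$ a torsion-free cocompact lattice in $G := \mathrm{Isom}(\mathbb{X}^3 \times \R)$, and then to exhibit a finite-index subgroup of $\Gamma$ that splits as a direct product $\pi_1(N) \times \Z$. First I would pin down the structure of $G$: for each aspherical $3$-dimensional geometry $\mathbb{X}^3$, a finite-index subgroup of $G$ is contained in $\mathrm{Isom}(\mathbb{X}^3) \times \mathrm{Isom}_0(\R) = \mathrm{Isom}(\mathbb{X}^3) \times \R$, where $\R$ acts by translations on the $\R$-factor and is central in this identity component. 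Passing from $\Gamma$ to $\Gamma \cap G_0$ corresponds to a finite cover of $M$, so we may assume each $\gamma \in \Gamma$ has the form $\gamma = (A_\gamma, t_\gamma)$ with $A_\gamma \in \mathrm{Isom}(\mathbb{X}^3)$ and $t_\gamma \in \R$.

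Next I would analyze the translation projection $p_2 \colon \Gamma \longrightarrow \R$, $(A,t) \mapsto t$. Cocompactness of $\Gamma$ together with discreteness in $G$ forces $p_2(\Gamma)$ to be a nontrivial discrete subgroup of $\R$, hence isomorphic to $\Z$; nontriviality follows because otherwise $\Gamma$ would embed in $\mathrm{Isom}(\mathbb{X}^3)$ and could not act cocompactly on a $4$-dimensional space. The kernel $\Gamma_0 := \ker p_2 = \Gamma \cap \mathrm{Isom}(\mathbb{X}^3)$ is then a torsion-free cocompact lattice in $\mathrm{Isom}(\mathbb{X}^3)$, so $N := \Gamma_0 \backslash \mathbb{X}^3$ is a closed oriented $3$-manifold carrying the geometry $\mathbb{X}^3$.

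This produces a short exact sequence
\[
1 \longrightarrow \pi_1(N) \longrightarrow \Gamma \longrightarrow \Z \longrightarrow 1,
\]
which is central because the $\R$-factor of $G_0$ commutes with the $\mathrm{Isom}(\mathbb{X}^3)$-factor. Since the quotient $\Z$ is free, the extension splits algebraically; centrality then upgrades the semidirect product to a direct product $\Gamma \cong \pi_1(N) \times \Z$. Applying this isomorphism at the level of aspherical manifolds, the classifying map yields a homotopy equivalence between the finite cover of $M$ and $N \times S^1$; in the present geometric setting, rigidity of these geometric structures in dimension four promotes this to a diffeomorphism.

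The main obstacle is the preliminary structural step: for some of the geometries (notably $\R^4$, $\mathbb{H}^2\times\R^2$, $Nil^3 \times \R$, $\widetilde{SL_2}\times\R$) the full isometry group of $\mathbb{X}^3 \times \R$ is strictly larger than $\mathrm{Isom}(\mathbb{X}^3) \times \mathrm{Isom}(\R)$ because extra isometries mix factors, and one must verify case by case that a sufficiently deep finite-index subgroup of $\Gamma$ still lies inside the split product $\mathrm{Isom}(\mathbb{X}^3) \times \R$ with $\R$ acting by translations. Once that is in hand, the remaining centrality-plus-freeness splitting argument is formal.
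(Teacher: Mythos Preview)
The paper does not prove this theorem; it is quoted from Hillman's monograph as an input and no argument is given beyond the citation. So there is nothing to compare your approach to, and I assess the sketch on its own merits.

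The step you label ``formal'' contains a genuine gap. The extension
\[
1 \longrightarrow \pi_1(N) \longrightarrow \Gamma \longrightarrow \Z \longrightarrow 1
\]
is \emph{not} a central extension: by definition that would require $\pi_1(N)\subset C(\Gamma)$, which fails as soon as $\pi_1(N)$ is non-abelian (already for $\mathbb{X}^3=\mathbb{H}^3$ or $Sol^3$). What you seem to want is that some \emph{section} $\Z\to\Gamma$ has central image, and you claim this follows because the $\R$-factor is central in $\mathrm{Isom}(\mathbb{X}^3)\times\R$. It does not. A section sends the generator of $\Z$ to some $\gamma_0=(A_0,t_0)\in\Gamma$, and conjugation by $\gamma_0$ on $\Gamma_0=\ker p_2$ is conjugation by $A_0$ inside $\mathrm{Isom}(\mathbb{X}^3)$. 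Nothing forces $A_0$ to centralize $\Gamma_0$, so the split extension is in general a genuine semidirect product. What is actually needed is an argument (for each $\mathbb{X}^3$) that, after a further finite-index pass, one can choose $\gamma_0$ with $A_0=1$; for instance, when $[N_{\mathrm{Isom}(\mathbb{X}^3)}(\Gamma_0):\Gamma_0]<\infty$ one finds $A_0^k\in\Gamma_0$ and hence $(1,kt_0)\in\Gamma$, so that $\langle\Gamma_0,(1,kt_0)\rangle\cong\Gamma_0\times\Z$. That case analysis is exactly the substance of Hillman's treatment, not a formality.

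There is also a secondary issue: the claim that $p_2(\Gamma)\subset\R$ is discrete does not follow in general from discreteness and cocompactness of $\Gamma$ in a product. Already for $\mathbb{X}^3=\R^3$ (so $\mathbb{X}^3\times\R=\R^4$) a lattice can be positioned so that its projection to any fixed $\R$-factor is dense; the theorem still holds there, but via Bieberbach rather than via your projection argument. So both the reduction to $\mathrm{Isom}(\mathbb{X}^3)\times\R$ \emph{and} the splitting into a direct product require genuine geometry-by-geometry work; the outline captures the shape of the proof but not its content.
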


The geometry $\mathbb{H}^2 \times \mathbb{H}^2$ can be realized both by manifolds that are virtual products of two closed hyperbolic
surfaces and by manifolds that are not even (virtual) surface bundles. These two types are known as the {\em reducible} and the {\em irreducible}
$\mathbb{H}^2 \times \mathbb{H}^2$ geometry respectively; see~\cite[Section 9.5]{Hillman} for further details and characterizations.

\medskip

\noindent{{\bf Solvable non-product geometries.}} Finally, there exist four aspherical non-product geometries of solvable type. Below, we describe their
model Lie
groups.

The nilpotent Lie group $Nil^4$ is defined as the semi-direct product $\R^3 \rtimes \R$, where $\R$ acts on $\R^3$ by
$
t \mapsto 
\exp\left(\begin{array}{ccc}
   0 & t & 0 \\
   0 & 0 & t \\
   0 & 0 & 0   \\
\end{array} \right).
$
The model spaces for the three non-product solvable -- but not nilpotent -- geometries are defined as follows:

Let $m$ and $n$ be positive integers and $a > b > c$ reals such that $a+b+c=0$ and $e^a,e^b,e^c$ are
roots of the equation $P_{m,n}(\lambda)=\lambda^3-m\lambda^2+n\lambda-1=0$. If $m \neq n$, the Lie group $Sol_{m \neq n}^4$ is defined as $\R^3 \rtimes
\R$, where $\R$ acts on $\R^3$ by
$
t \mapsto 
\left(\begin{array}{ccc}
   e^{at} & 0 & 0 \\
   0 & e^{bt} & 0 \\
   0 & 0 & e^{ct} \\
\end{array} \right).
$
We remark that the case $m=n$ gives $b = 0$ and corresponds to the product geometry $Sol^3 \times \R$. 

If we require two equal roots of the polynomial $P_{m,n}$, then we obtain the model space of the $Sol_0^4$ geometry, again defined as $\R^3
\rtimes \R$, where now the action of $\R$ on $\R^3$ is given by
$
t \mapsto 
\left(\begin{array}{ccc}
   e^{t} & 0 & 0 \\
   0 & e^{t} & 0 \\
   0 & 0 & e^{-2t} \\
\end{array} \right).
$

The last solvable model space is an extension of $\R$ by the $3$-dimensional Heisenberg group
 $Nil^3$. Namely, the Lie group $Sol_1^4$ is defined as the semi-direct product $Nil^3 \rtimes \R$, where $\R$ acts on $Nil^3$ by
$
t \mapsto 
\left(\begin{array}{ccc}
   1 & e^{-t}x & z \\
   0 & 1 & e^{t}y \\
   0 & 0 & 1 \\
\end{array} \right).
$

\medskip

Every closed $Sol_0^4$- or $Sol_{m \neq n}^4$-manifold is a mapping torus of a self-homeomorphism of $T^3$ and every closed oriented $Nil^4$- or $Sol_1^4$-manifold is a mapping torus of a self-homeomorphism of a $Nil^3$-manifold~\cite[Sections 8.6 and 8.7]{Hillman}. We note that non-orientable closed $Nil^4$- or $Sol_1^4$-manifolds are not mapping tori of $Nil^3$-manifolds~\cite[Theorem 8.9]{Hillman}. Further details about manifolds possessing a solvable non-product geometry, in particular concerning their fundamental groups, will be
provided while examining each geometry individually. 

\medskip

A crucial property for our study is that the $4$-dimensional geometries are homotopically unique by a result of Wall; cf. \cite[Theorem 10.1]{Wall:geom4-mfds2} and \cite[Prop. 1]{Kotschick:4-mfds}. In particular, {\em a closed aspherical geometric $4$-manifold $M$ is finitely covered by a closed $\mathbb{X}^4$-manifold if and only if it possesses the geometry $\mathbb{X}^4$}. 

\subsection{Proof of Theorem \ref{thmE} in dimensions${\bf\leq 3}$}\label{s:proofE}

Dimension 1 gives already the first (trivial) example of a solvable group presentable by products but not IIPP, namely the infinite cyclic group, which is the fundamental group of $S^1$. In dimension 2, the only non-trivial solvable fundamental group of an oriented manifold is the fundamental group of $T^2$ which is the product $\Z\times\Z$. The fundamental groups of higher genus surfaces are (non-elementary) hyperbolic and therefore not presentable by products as shown in~\cite{KotschickLoeh1}.
 
We now deal with infinite fundamental groups of closed $3$-manifolds. First, using Epstein's factorization theorem for $3$-manifold groups~\cite{Epstein} and the fact that not virtually cyclic free products are not presentable by products~\cite{KotschickLoeh2}, we obtain that the fundamental group of a closed $3$-manifold is presentable by products if and only if it has virtually infinite center \cite[Theorem 8]{KotschickNeofytidis}. 
These two properties are moreover equivalent to $M$ being Seifert fibered (with infinite fundamental group), by the
Seifert fiber space conjecture, which was independently proven by Gabai %~\cite{Gabai} 
and by Casson-Jungreis. %~\cite{CassonJungreis}. 
Recall that a closed $3$-manifold $M$ (possibly with finite fundamental group) is Seifert fibered if and
only if it is virtually a circle bundle over a closed oriented surface, by the works of Seifert, Thurston and Scott; cf.~\cite{Thurstonbook,Scott:3-mfds}. Equivalently, $M$ carries one of the geometries $\widetilde{SL_2}$, $\mathbb{H}^2 \times \R$, $Nil^3$, $\R^3$, $S^2 \times \R$ or $S^3$. Thus, we have the following consequence of \cite[Theorem 8]{KotschickNeofytidis}:

\begin{prop}[\cite{KotschickNeofytidis}]\label{p:3-mfdsPPSeifert}
 Suppose $M$ is a closed $3$-manifold with infinite fundamental group. Then $\pi_1(M)$ is presentable by products if and only if $M$ is a virtual
circle bundle over a closed oriented surface. Equivalently, $M$ possesses one of the geometries $\widetilde{SL_2}$, $\mathbb{H}^2 \times \R$, $Nil^3$ or $\R^3$.
\end{prop}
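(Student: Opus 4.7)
The plan is to deduce the three equivalences by combining \cite[Theorem 8]{KotschickNeofytidis}, the Seifert Fiber Space Conjecture of Gabai and Casson--Jungreis, and Thurston's classification of the eight $3$-dimensional geometries. The core of the argument is the algebraic equivalence
\[
\pi_1(M) \text{ presentable by products} \ \Longleftrightarrow \ \pi_1(M) \text{ has virtually infinite center},
\]
from which the topological consequences follow via standard machinery. One direction is immediate from Example \ref{ex:examplesPPtf}(2); the substantial content is the forward direction.

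For the forward direction I would proceed in two stages. First, Kneser--Milnor (or equivalently Epstein's factorization theorem) writes $\pi_1(M)$ as a free product of the fundamental groups of the prime summands. A result of \cite{KotschickLoeh2} asserts that a non-trivial free product is presentable by products only when it is virtually infinite cyclic, which already gives virtually infinite center. Otherwise $M$ must be prime. Second, for prime $M$ with infinite $\pi_1(M)$ one examines the JSJ decomposition: hyperbolic pieces contribute non-elementary hyperbolic subgroups with trivial center, and such groups are not presentable by products; hence a PP fundamental group forces every JSJ piece to be Seifert, and the Bass--Serre action on the tree of the JSJ splitting, together with compatibility of Seifert fibers across incompressible tori, forces $M$ itself to be Seifert fibered, producing the desired infinite cyclic normal subgroup in $\pi_1(M)$.

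Once the equivalence with virtually infinite center is established, the Seifert Fiber Space Conjecture applies: after passing to a finite cover we obtain an infinite cyclic normal subgroup of $\pi_1(M)$, and hence $M$ is virtually Seifert fibered. By classical work of Seifert, Scott, and Thurston, this is equivalent to $M$ being virtually a circle bundle over a closed oriented surface. Finally, Thurston's classification identifies the possible geometries of a Seifert fibered $3$-manifold with infinite $\pi_1$ over a base of positive genus, the four cases $\widetilde{SL_2}$, $\mathbb{H}^2 \times \R$, $Nil^3$, $\R^3$ being distinguished by the sign of the orbifold Euler characteristic of the base together with the vanishing or non-vanishing of the rational Euler class of the bundle. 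The main obstacle is the second stage of the first step: namely ruling out PP for fundamental groups of prime, non-Seifert $3$-manifolds, which is the geometric heart of \cite{KotschickNeofytidis}; all subsequent steps are invocations of standard classification results.
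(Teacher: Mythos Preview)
Your overall structure matches the paper's: the proposition is deduced from \cite[Theorem~8]{KotschickNeofytidis} (presentable by products $\Longleftrightarrow$ virtually infinite center), then the Seifert fiber space conjecture of Gabai and Casson--Jungreis, and finally the Seifert/Thurston/Scott classification of Seifert fibered spaces. The paper's argument, given in the paragraph preceding the proposition, does exactly this, recording only that Theorem~8 rests on Epstein's factorization theorem together with the fact from \cite{KotschickLoeh2} that non-virtually-cyclic free products are not presentable by products. Your first stage agrees with this.

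Where you diverge is the JSJ sketch for the prime case, and this carries gaps as written. The step ``hyperbolic vertex groups are not presentable by products, hence every JSJ piece must be Seifert'' is not justified: presentability by products is a property of $\pi_1(M)$ and is not automatically inherited by vertex subgroups of a graph-of-groups splitting. Moreover, even if $M$ were a graph manifold, Seifert fibers need not be compatible across the JSJ tori (closed $Sol^3$-manifolds are graph manifolds but not Seifert fibered), so ``compatibility of fibers'' is precisely the conclusion to be established rather than an available input. There is also a redundancy: if your JSJ argument already produced an infinite cyclic normal subgroup, then $M$ would be Seifert fibered and the subsequent appeal to the Seifert fiber space conjecture would be superfluous. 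The paper does not pursue this route at all; it simply invokes \cite[Theorem~8]{KotschickNeofytidis}, and the named ingredients (Epstein, \cite{KotschickLoeh2}, and implicitly the torsion-free dichotomy of \cite[Prop.~3.2]{KotschickLoeh1}) do not involve JSJ.
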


However, if the circle fiber of $M$ is not (virtually) a direct factor, i.e. if $M$ is not modeled on $\mathbb{H}^2 \times \R$ or $\R^3$, then $\pi_1(M)$ cannot be IIPP:

\begin{prop}\label{p:3-mfdsnotIIPP}
 The fundamental group of a non-trivial circle bundle $M$ over a closed oriented aspherical surface $\Sigma$ is not IIPP.
\end{prop}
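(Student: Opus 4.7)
The plan is to proceed by contradiction. Assume $\pi_1(M)$ is IIPP, so there exist a finite index subgroup $\overline{\Gamma}\subset\pi_1(M)$ and element-wise commuting subgroups $\Gamma_1,\Gamma_2\subset\overline{\Gamma}$, both of infinite index in $\overline{\Gamma}$, with $\overline{\Gamma}=\Gamma_1\Gamma_2$. By Lemma \ref{l:propertiescirclebundles}(1) the cover $\overline{M}$ of $M$ corresponding to $\overline{\Gamma}$ is again a non-trivial circle bundle over a finite cover $\overline{\Sigma}$ of $\Sigma$, whose rational Euler class is still non-zero by Remark \ref{r:Torsion-freeEuler}. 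I would then split into two cases according to the genus of $\Sigma$, since $\pi_1(T^2)$ is presentable by products while higher-genus surface groups are not.

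If $\Sigma$ has genus at least two, then so does $\overline{\Sigma}$, and $\pi_1(\overline{\Sigma})$ is a closed hyperbolic surface group --- in particular not presentable by products and with trivial center (cf.~\cite{KotschickLoeh1}). The central extension $1\to\Z\to\overline{\Gamma}\to\pi_1(\overline{\Sigma})\to 1$ then forces $C(\overline{\Gamma})\cong\Z$, with $\overline{\Gamma}/C(\overline{\Gamma})\cong\pi_1(\overline{\Sigma})$ not presentable by products. Theorem \ref{thmD} applied to $\overline{\Gamma}$ would make $\overline{\Gamma}$ reducible, and then the implication $(3)\Rightarrow(2)$ of Theorem \ref{thmC} produces a further finite cover of $\overline{M}$ of the form $S^1\times\overline{\Sigma}'$, forcing the Euler class of $\overline{M}$ to be torsion by Remark \ref{r:Torsion-freeEuler} --- a contradiction.

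The case $\Sigma=T^2$ is where the real work lies and is the main obstacle, because $\pi_1(T^2)=\Z^2$ is presentable by products, so Theorem \ref{thmD} no longer applies and one has to exploit the Heisenberg structure directly. Here $\overline{\Gamma}$ is a torsion-free lattice in $\mathrm{Nil}^3$, and one may choose generators $X,Y,Z$ with $[X,Z]=[Y,Z]=1$ and $[X,Y]=Z^m$, where $m\neq 0$ is (a multiple of) the Euler number of $\overline{M}\to T^2$. A direct commutator calculation in this Heisenberg-type presentation shows that $\gamma_1=X^{a_1}Y^{a_2}Z^{a_3}$ and $\gamma_2=X^{b_1}Y^{b_2}Z^{b_3}$ commute in $\overline{\Gamma}$ if and only if $a_1b_2=a_2b_1$, i.e.\ their projections in $\overline{\Gamma}/\langle Z\rangle\cong\Z^2$ are rationally proportional. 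I would then split into two subcases: if $\pi(\Gamma_1)$ contains two linearly independent vectors, commutativity forces $\pi(\Gamma_2)=0$, so $\Gamma_2\subset\langle Z\rangle$ and $\Gamma_1$ surjects onto $\Z^2$; picking $A,B\in\Gamma_1$ whose images form a basis, the commutator $[A,B]=Z^k$ with $k\neq 0$ already lies in $\Gamma_1$, whence $\langle A,B,Z\rangle\subset\Gamma_1$ has finite index in $\overline{\Gamma}$, contradicting $[\overline{\Gamma}:\Gamma_1]=\infty$. Otherwise both $\pi(\Gamma_i)$ lie in rational lines and, by commutativity, in a common line, so $\pi(\overline{\Gamma})=\pi(\Gamma_1)+\pi(\Gamma_2)$ has rank at most one, contradicting $\pi(\overline{\Gamma})=\Z^2$. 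Either way we reach a contradiction, completing the proof.
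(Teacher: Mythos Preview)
Your proof is correct; the hyperbolic-base case is essentially the paper's argument (both invoke Theorem~\ref{thmD}, and your endgame via Theorem~\ref{thmC} and the Euler class is equivalent to the paper's appeal to Epstein--Stallings irreducibility). One minor slip: from $[A,B]=Z^k$ you only get $Z^k\in\Gamma_1$, not $Z\in\Gamma_1$, so write $\langle A,B\rangle\subset\Gamma_1$ rather than $\langle A,B,Z\rangle$; this subgroup already has finite index since it surjects onto $\Z^2$ and meets $\langle Z\rangle$ in $\langle Z^k\rangle$.

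The genus-one case is where your argument genuinely diverges from the paper's. The paper handles $Nil^3$-manifold groups by invoking Strebel's theorem (each $\Gamma_i$ has cohomological dimension $\le 2$) together with Bieri's result that a cohomological-dimension-two group with infinite cyclic center is virtually (free)$\times\Z$; this forces each $\Gamma_i/(\Gamma_1\cap\Gamma_2)$ to be virtually free, the resulting central extensions split, and $\pi_1(M)$ ends up virtually $\Z\times F_{k_1}\times F_{k_2}$, an absurdity. Your route is more elementary and self-contained: you exploit the explicit commutator identity $[X^{a_1}Y^{a_2},X^{b_1}Y^{b_2}]=Z^{m(a_1b_2-a_2b_1)}$ in the Heisenberg presentation, so that commuting elements have proportional images in $\Z^2$, and then finish with a short rank/linear-algebra dichotomy. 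This avoids the external inputs (Strebel, Bieri) entirely. The trade-off is that the paper's approach, being phrased in terms of cohomological dimension, hints at why dimension four is the threshold for Theorem~\ref{thmE} (cf.\ Remark~\ref{r:hirschlength} and the $H_5$ example), whereas your hands-on computation is specific to the rank-two quotient.
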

\begin{proof}
 If $\Sigma$ is hyperbolic, then $\pi_1(M)$ fits into a non-split central extension
\[
 1 \longrightarrow \Z \longrightarrow \pi_1(M) \longrightarrow \pi_1(\Sigma) \longrightarrow 1.
\]
In particular, $\pi_1(M)$ fulfills the conditions of Theorems \ref{thmC} and \ref{thmD}, because non-virtually cyclic hyperbolic groups are not
presentable by products~\cite{KotschickLoeh1}. By Epstein's factorization theorem \cite{Epstein} and Stallings fibering criterion \cite{Stallings:fiber}, the fundamental group of a non-trivial circle bundle over a closed oriented surface is never reducible, and so Theorem \ref{thmD} implies that $\pi_1(M)$ is not IIPP. 

The remaining case is when $\Sigma$ has genus one, i.e. when $M$ is a non-trivial circle bundle over $T^2$ (and therefore a $Nil^3$-manifold). In
that case, $\pi_1(M)$ fits into a non-split central extension
\[
 1 \longrightarrow \Z \longrightarrow \pi_1(M) \longrightarrow \Z^2 \longrightarrow 1,
\]
where $C(\pi_1(M)) = \Z$. Suppose that $\pi_1(M)$ is IIPP. Then we may assume that there exist non-trivial infinite-index commuting subgroups $\Gamma_1,
\Gamma_2
\subset \pi_1(M)$ and a short exact sequence
\[
 1 \longrightarrow \Gamma_1\cap\Gamma_2 \longrightarrow \Gamma_1 \times \Gamma_2 \stackrel{\varphi}\longrightarrow \pi_1(M) \longrightarrow 1,
\]
where $\varphi$ is the multiplication map and $\Gamma_1 \cap \Gamma_2 \subset C(\pi_1(M))$. (Note that both $\Gamma_i$ are torsion-free, because $\pi_1(M)$
is torsion-free.) We observe that $\Gamma_1 \cap \Gamma_2$ cannot be trivial, otherwise $\pi_1(M)$ would be Abelian (equivalently, $M$ would be $T^3$). This means that $\Gamma_1 \cap \Gamma_2$ must be isomorphic to
$C(\pi_1(M)) = \Z$. Moreover, since $[\pi_1(M) \colon \Gamma_i] = \infty$ and $\pi_1(M)$ has cohomological
dimension three, we conclude that each of the $\Gamma_i$ is of cohomological dimension at most two~\cite{Sterbel}. Now, $\Gamma_1 \cap \Gamma_2$ is central in both $\Gamma_i$ which means that the quotients $\Gamma_i/(\Gamma_1 \cap \Gamma_2)$ are finitely
generated and virtually free groups
$F_{k_i}$, by a result of Bieri~\cite[Cor. 8.7]{Bieri:book}. Passing to finite coverings, we may assume that these quotient groups are free and therefore
the central extensions
\[
 1 \longrightarrow \Gamma_1 \cap \Gamma_2 \longrightarrow \Gamma_i \longrightarrow F_{k_i} \longrightarrow 1 
\]
split. We have finally reached the absurd conclusion that $\pi_1(M)$ is virtually isomorphic to a direct product $\Z \times F_{k_1} \times F_{k_2}$.
\end{proof}

\begin{rem}\label{r:hirschlength}
 The case of nilpotent groups in the above proposition could be treated in a different way, using the Hirsch length, 
 however, the dimensions here were suitable to appeal to Bieri's~\cite{Bieri:book} result on central extensions. In fact, the cohomological dimension of a finitely generated torsion-free nilpotent group coincides to its Hirsch length~\cite{Gruenberg}.  
We will return and use the Hirsch length of nilpotent groups in Section \ref{ss:4-mfdsIIPP}.
\end{rem}

Because the fundamental group of $S^2 \times S^1$ is infinite cyclic and therefore not IIPP, we have now determined all fundamental groups of closed $3$-manifolds that are (not) IIPP: 

\begin{cor}\label{c:IIPPequivalence3-mfds}
 Suppose that the fundamental group of a closed oriented $3$-manifold $M$ is infinite. Then $\pi_1(M)$ is IIPP if and only if it is reducible. Equivalently, $\pi_1(M)$ is a virtual product $\pi_1(\Sigma) \times \Z$, where $\Sigma$ is a closed oriented aspherical surface.
\end{cor}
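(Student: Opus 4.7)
The implication ``reducible $\Longrightarrow$ IIPP'' is immediate from the definitions, and the virtual product form $\pi_1(M)$ virtually $\pi_1(\Sigma)\times\Z$ clearly forces reducibility. So I would focus on the single nontrivial implication: if $\pi_1(M)$ is IIPP, then $\pi_1(M)$ is virtually $\pi_1(\Sigma)\times\Z$ for a closed oriented aspherical surface $\Sigma$.

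The first step is to apply Proposition~\ref{p:3-mfdsPPSeifert}: since IIPP trivially implies presentable by products, that proposition gives a finite cover $M'\to M$ which is a circle bundle $S^1\to M'\to\Sigma$ over some closed oriented surface. I would then rule out $\Sigma=S^2$: in that case any circle bundle has fundamental group finite or isomorphic to $\Z$, and neither is compatible with $\pi_1(M)$ being both infinite and IIPP. Hence $\Sigma$ is aspherical.

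The second step is to show that the bundle $M'\to\Sigma$ is in fact trivial, after which $M'\cong\Sigma\times S^1$ yields $\pi_1(M)$ virtually $\pi_1(\Sigma)\times\Z$. To this end I would apply Proposition~\ref{p:3-mfdsnotIIPP}, which states that the fundamental group of a non-trivial circle bundle over a closed oriented aspherical surface is not IIPP. For this I need $\pi_1(M')$ itself to be IIPP, so the key intermediate lemma is that IIPP descends to finite-index subgroups. This I would verify by a direct Goursat-type argument: given a witness $\varphi\colon\Gamma_1\times\Gamma_2\to\pi_1(M)$ of IIPP, set $K_i:=\varphi^{-1}(\pi_1(M'))\cap(\Gamma_i\times\{e\})$; then $K_1\times K_2$ has finite index in $\Gamma_1\times\Gamma_2$, its image lies in $\pi_1(M')$ with finite index, and each $\varphi(K_i)\subseteq\varphi(\Gamma_i)$ inherits infinite index in $\pi_1(M)$ and hence in $\pi_1(M')$.

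The main (though still routine) technical point I foresee is the descent of IIPP to finite-index subgroups outlined above; everything else is a direct combination of Propositions~\ref{p:3-mfdsPPSeifert} and~\ref{p:3-mfdsnotIIPP} together with the asphericity observation.
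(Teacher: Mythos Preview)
Your proposal is correct and follows the same route as the paper: use Proposition~\ref{p:3-mfdsPPSeifert} to reduce to a (virtual) circle bundle over a closed oriented surface, dispose of the $S^2$ base (the paper does this in the sentence immediately preceding the corollary, observing that $\pi_1(S^2\times S^1)\cong\Z$ is not IIPP), and then invoke Proposition~\ref{p:3-mfdsnotIIPP} to rule out non-trivial bundles over aspherical bases. Your explicit Goursat-type verification that IIPP passes to finite-index subgroups is a detail the paper leaves implicit but which is indeed needed in order to apply Proposition~\ref{p:3-mfdsnotIIPP} to the finite cover $M'$ rather than to $M$ itself.
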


In particular, we have proved Theorem \ref{thmE} in dimension three.

\subsection{Proof of Theorem \ref{thmE} in dimension four}\label{s:4-mfdsnilsol}

We will prove Theorem \ref{thmE} in dimension four in two steps. First, we will determine which closed aspherical geometric $4$-manifolds have fundamental groups (not) presentable by products:

\begin{thm}\label{t:4-mfdgroupsPP}
  The fundamental group of a closed aspherical geometric $4$-manifold $M$ is presentable by products if and only if
$M$ possesses one of the geometries $\mathbb{X}^3\times \R$, $Nil^4$, $Sol^4_1$ or the reducible $\mathbb{H}^2\times\mathbb{H}^2$ geometry.
\end{thm}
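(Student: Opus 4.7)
The plan is to proceed case by case through the classification of aspherical $4$-dimensional Thurston geometries recorded in Table \ref{table:4geom}. Throughout, I exploit two facts: the property of being (not) presentable by products is inherited by and from finite index subgroups, and by Wall's homotopy uniqueness of $4$-geometries every closed aspherical geometric $4$-manifold is finitely covered by an $\mathbb{X}^4$-manifold for its geometry $\mathbb{X}^4$. This lets me pass freely to convenient finite covers and work directly with the standard model lattices.

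For the sufficiency direction, in each of the four listed cases the fundamental group visibly presents itself as a product on a finite cover. For $\mathbb{X}^3\times\R$ geometries, Theorem \ref{t:hillmanproducts} supplies a finite cover of the form $N\times S^1$, so $\pi_1(M)$ is a virtual direct product and hence presentable by products by Example \ref{ex:examplesPPtf}(1). For the reducible $\mathbb{H}^2\times\mathbb{H}^2$ geometry, the definition directly provides a finite cover diffeomorphic to a product of two hyperbolic surfaces. For $Nil^4$, the model Lie group is $2$-step nilpotent with infinite center, so uniform lattices inherit an infinite center on a finite cover and Example \ref{ex:examplesPPtf}(2) applies. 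Finally, for $Sol^4_1$, inspection of the defining $\R$-action on $Nil^3$ shows that the $\R$-factor acts trivially on the central ($z$-)direction of $Nil^3$; hence the center of $Sol^4_1$ contains the center of the Heisenberg subgroup $Nil^3$, the uniform lattices have infinite center, and Example \ref{ex:examplesPPtf}(2) again applies.

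For the necessity direction, I must rule out presentability by products for the fundamental groups of closed $\mathbb{H}^4$-, $\mathbb{H}^2(\C)$-, irreducible $\mathbb{H}^2\times\mathbb{H}^2$-, $Sol^4_{m\neq n}$- and $Sol^4_0$-manifolds. For $\mathbb{H}^4$ and $\mathbb{H}^2(\C)$, uniform lattices are non-elementary word hyperbolic, and such groups are known not to be presentable by products \cite{KotschickLoeh1}. For the irreducible $\mathbb{H}^2\times\mathbb{H}^2$ geometry, the fundamental groups are irreducible uniform lattices in $\mathrm{PSL}(2,\R)\times\mathrm{PSL}(2,\R)$ and again fall under the Kotschick--L\"oh non-presentability results for locally symmetric spaces of non-compact type (cf.\ Remark \ref{r:KLGromov}).

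The hard part will be handling the two remaining non-product solvable geometries $Sol^4_{m\neq n}$ and $Sol^4_0$, since these groups are genuinely solvable and no hyperbolicity or rigidity input is available. My plan is to use the explicit description: the fundamental group of any closed orientable representative is a semidirect product $\Z^3\rtimes_\phi\Z$, where $\phi\in\mathrm{GL}(3,\Z)$ has three distinct positive real eigenvalues, none of them a root of unity. A direct eigenvalue argument then shows that no power of $\phi$ fixes a nontrivial vector of $\Z^3$, so every finite index subgroup has trivial center. If such a group $\Gamma$ were presentable by products via commuting infinite subgroups $\Gamma_1,\Gamma_2$, Lemma \ref{l:KotschickLoehproperiesPP} would force $\Gamma_1\cap\Gamma_2\subset C(\Gamma)=1$, and the multiplication map would realize $\Gamma$, up to finite index, as the direct product $\Gamma_1\times\Gamma_2$; in particular $\Gamma$ would be reducible. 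Wall's homotopy uniqueness of $4$-geometries would then force the underlying manifold to possess an aspherical product $4$-geometry, contradicting the hypothesis that it carries $Sol^4_{m\neq n}$ or $Sol^4_0$.
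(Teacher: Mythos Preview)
Your treatment of the hyperbolic geometries, the irreducible $\mathbb{H}^2\times\mathbb{H}^2$ geometry, and the sufficiency direction (the geometries $\mathbb{X}^3\times\R$, $Nil^4$, $Sol^4_1$, reducible $\mathbb{H}^2\times\mathbb{H}^2$) matches the paper's argument, if somewhat more compressed: the paper proves the $Nil^4$ and $Sol^4_1$ cases by exhibiting explicit virtual circle-bundle structures (Propositions \ref{p:nil4-mfds} and \ref{p:sol14-mfds}), but your Lie-group/center reasoning is an acceptable shortcut to the same conclusion.

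Where you genuinely diverge from the paper is in the necessity argument for $Sol^4_{m\neq n}$ and $Sol^4_0$. The paper never touches reducibility or Wall's uniqueness here; instead it shows directly (Proposition \ref{p:solvablenotPP}) that the normal subgroup $\Z^3$ is \emph{acentral} in $\pi_1(M)=\Z^3\rtimes_\theta\Z$, and then invokes Proposition \ref{p:kotschickloehacentral} of \cite{KotschickLoeh2} to conclude non-presentability. Your route --- trivial center in every finite-index subgroup, hence presentability would force a virtual direct product, hence a product geometry by Wall --- is different, and it has two problems.

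First, a factual slip: your description of the monodromy for $Sol^4_0$ is incorrect. The monodromy of a closed $Sol^4_0$-manifold does \emph{not} have three distinct real eigenvalues; it has a pair of complex-conjugate eigenvalues that are not roots of unity together with a real eigenvalue $\neq\pm 1$ (see \cite[pp.~164--165]{Hillman} or the discussion in Section \ref{ss:enumeration4-mfds}). The conclusion you need --- that no power of $\theta$ fixes a non-zero vector of $\Z^3$ --- is still true, but your stated reason does not cover $Sol^4_0$.

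Second, and more seriously, the final step does not follow. Wall's homotopy uniqueness says that two closed aspherical geometric $4$-manifolds with isomorphic fundamental groups carry the same geometry; it does \emph{not} say that a manifold whose fundamental group is a virtual product must possess a product geometry. To use Wall you would first have to produce a closed aspherical $4$-manifold with a product geometry realizing the virtual product $\Gamma_1\times\Gamma_2$, and nothing in your argument does this. The gap can be closed without Wall: once you know that every finite-index subgroup of $\Gamma=\Z^3\rtimes_\theta\Z$ has trivial center, observe that if some finite-index $\overline\Gamma\cong\Gamma_1\times\Gamma_2$ with both $\Gamma_i$ infinite torsion-free polycyclic, then $h(\Gamma_1)+h(\Gamma_2)=4$ forces $h(\Gamma_i)\le 2$ for some $i$, and any torsion-free polycyclic group of Hirsch length $\le 2$ has non-trivial center; hence $C(\overline\Gamma)\neq 1$, a contradiction. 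Alternatively, and more cleanly, you can follow the paper and simply verify that $\Z^3$ is acentral.
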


Then, Theorem \ref{thmE} will follow as a consequence of the next characterization:

\begin{thm}\label{t:4-mfdgroupsIIPP}
  The fundamental group of a closed aspherical geometric $4$-manifold $M$ is reducible if and only if it is IIPP.
Equivalently, $M$ carries one of the product geometries $\mathbb{X}^3 \times \R$ or the reducible $\mathbb{H}^2\times\mathbb{H}^2$ geometry.
\end{thm}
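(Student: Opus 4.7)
The direction ``reducible $\Rightarrow$ IIPP'' is immediate from the definitions: a virtual direct product of two infinite groups presents itself by that product with both factors of infinite index. For the converse, my plan is to run a case analysis on the geometry, starting from Theorem \ref{t:4-mfdgroupsPP}. Since ``IIPP'' is stronger than ``presentable by products'', any aspherical geometric $4$-manifold $M$ with $\pi_1(M)$ IIPP must carry one of the geometries $\mathbb{X}^3 \times \mathbb{R}$, $Nil^4$, $Sol_1^4$, or reducible $\mathbb{H}^2 \times \mathbb{H}^2$. In the $\mathbb{X}^3 \times \mathbb{R}$ case, Theorem \ref{t:hillmanproducts} already supplies a finite cover of the form $N \times S^1$, so $\pi_1(M)$ is reducible; in the reducible $\mathbb{H}^2 \times \mathbb{H}^2$ case, reducibility is built into the definition of the geometry. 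The substantive task is therefore to rule out $Nil^4$ and $Sol_1^4$ by proving that manifolds with these geometries have fundamental groups that are \emph{not} IIPP.

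For $Sol_1^4$, I would first establish that every closed oriented $Sol_1^4$-manifold is virtually a non-trivial circle bundle over a closed oriented $Sol^3$-manifold $B$ whose Euler class is not torsion; this is the content of the claimed Proposition \ref{p:sol14-mfds}. Since $\pi_1(B)$ is not presentable by products by Proposition \ref{p:3-mfdsPPSeifert}, Theorem \ref{thmD} applied to the quotient $\pi_1(M)/C(\pi_1(M))$ (which is commensurable with $\pi_1(B)$) gives that $\pi_1(M)$ is IIPP if and only if it is reducible. But reducibility would force $M$ to be virtually a product $S^1 \times B'$ by the argument of Theorem \ref{thmC}, contradicting the non-torsion of the Euler class via Remark \ref{r:Torsion-freeEuler}.

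The genuinely delicate case, and the main obstacle I anticipate, is $Nil^4$. Here $\pi_1(M)/C(\pi_1(M))$ is again torsion-free nilpotent, essentially a $Nil^3$-lattice, so it \emph{is} presentable by products and Theorem \ref{thmD} does not apply. My plan is a direct argument using the Hirsch length (legitimate for finitely generated torsion-free nilpotent groups, cf.\ Remark \ref{r:hirschlength}). First verify the structural facts that $C(\pi_1(M)) \cong \mathbb{Z}$ persists in all finite covers and that $M$ is a non-trivial circle bundle over a closed $Nil^3$- or flat manifold with non-torsion Euler class. Assuming $\pi_1(M)$ is IIPP, write the associated short exact sequence
\[
1 \longrightarrow \Gamma_1 \cap \Gamma_2 \longrightarrow \Gamma_1 \times \Gamma_2 \stackrel{\varphi}\longrightarrow \pi_1(M) \longrightarrow 1,
\]
with $\Gamma_1 \cap \Gamma_2 \subseteq C(\pi_1(M)) = \mathbb{Z}$. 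Additivity of the Hirsch length, together with $h(\pi_1(M)) = 4$, $h(\Gamma_1 \cap \Gamma_2) \leq 1$, and $h(\Gamma_i) \leq 3$ imposed by infinite index, leaves only a short list of possibilities for the pair $(h(\Gamma_1), h(\Gamma_2))$. The case $h(\Gamma_1 \cap \Gamma_2)=0$ gives $\pi_1(M)$ as an outright direct product, contradicting the irreducibility of $Nil^4$-lattices; the remaining cases, where $\Gamma_1 \cap \Gamma_2 = C(\pi_1(M))$, are ruled out by projecting to $\pi_1(M)/C(\pi_1(M))$ and using that a $Nil^3$-lattice does not split (even virtually) as a direct product of two nilpotent subgroups of positive Hirsch length — which would otherwise yield, after pulling the splitting back, a virtual product decomposition of $\pi_1(M)$ and hence a torsion Euler class, again a contradiction. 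The careful bookkeeping in this Hirsch-length case analysis is the step I expect to require the most attention.
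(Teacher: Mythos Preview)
Your overall architecture matches the paper's: reduce to ruling out $Nil^4$ and $Sol_1^4$, handle $Sol_1^4$ via Theorem \ref{thmD} (the paper closes with Wall's uniqueness instead of the Euler-class argument, but this is cosmetic), and treat $Nil^4$ by a Hirsch-length count. The substantive divergence is in how you finish the $Nil^4$ case.

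The paper, after reaching $(h(\Gamma_1),h(\Gamma_2))=(3,2)$, invokes the classification of torsion-free nilpotent groups of Hirsch length $\leq 3$ (Lemma \ref{l:cd=3nilpotent}) to get $\Gamma_2\cong\Z^2$, so $\mathrm{rank}\,C(\Gamma_1\times\Gamma_2)\geq 3$; then Lemma \ref{l:extensionscenter} applied to the short exact sequence $1\to\Gamma_1\cap\Gamma_2\to\Gamma_1\times\Gamma_2\to\pi_1(M)\to 1$ forces $\mathrm{rank}\,C(\Gamma_1\times\Gamma_2)\leq 2$, a contradiction. Your route---project to $\pi_1(M)/C(\pi_1(M))$ and invoke irreducibility of the $Nil^3$-quotient---can also be made to work, but as written it has a real gap and one wrong step. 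The gap: you need to argue that the images $\overline{\Gamma_1},\overline{\Gamma_2}$ in the quotient have \emph{trivial} intersection, so that the $Nil^3$-lattice is literally (a finite-index subgroup isomorphic to) $\overline{\Gamma_1}\times\overline{\Gamma_2}$; this follows from the Hirsch-length bookkeeping ($h(\overline{\Gamma_1})+h(\overline{\Gamma_2})=2+1=3=h(\pi_1(M)/C(\pi_1(M)))$ forces $h(\overline{\Gamma_1}\cap\overline{\Gamma_2})=0$, hence trivial by torsion-freeness), but you do not say it, and without it the projection only gives a presentation by products, not a product splitting. The wrong step is your justification ``which would otherwise yield, after pulling the splitting back, a virtual product decomposition of $\pi_1(M)$'': splittings of a quotient do not pull back through central extensions (the Heisenberg group over $\Z^2$ is the standard counterexample). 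The irreducibility of a $Nil^3$-lattice should instead be argued directly (infinite cyclic center vs.\ both factors nilpotent with nontrivial center), or simply cited from the paper's treatment of dimension three.
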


\subsection*{Groups presentable by products: Proof of Theorem \ref{t:4-mfdgroupsPP}}\label{ss:4-mfdsPP}

%We first prove Theorem \ref{t:4-mfdgroupsPP}. 
We proceed by examining case by case all aspherical geometries, following the enumeration given at the beginning of this section (cf. Table \ref{table:4geom}).

\subsubsection{Hyperbolic geometries.}
 
As we have already seen (and used in the proof of Proposition \ref{p:3-mfdsnotIIPP}), non-virtually
cyclic hyperbolic groups are not presentable by products; we refer to~\cite[Prop. 3.6]{KotschickLoeh1} for the details. Since the fundamental groups of closed $4$-manifolds with hyperbolic geometries $\mathbb{H}^4$ or $\mathbb{H}^2(\C)$ are obviously not (virtually) infinite cyclic, we deduce that they are not presentable by products. 

\subsubsection{Product geometries.}

An equivalent formulation of Theorem \ref{t:hillmanproducts} is the following:

\begin{cor}
 The fundamental group of a closed aspherical $4$-manifold $M$ carrying a product geometry $\mathbb{X}^3 \times \R$ is a virtual product $\pi_1(N) \times
\Z$, where $N$ is a closed aspherical $\mathbb{X}^3$-manifold. In particular, $\pi_1(M)$ is presentable by products and $C(\pi_1(M))$ is
virtually infinite.
\end{cor}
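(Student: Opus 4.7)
My plan is to obtain the corollary as an essentially immediate reformulation of Theorem \ref{t:hillmanproducts} on the level of fundamental groups. I would first invoke that theorem to produce a finite covering of $M$ of diffeomorphism type $N \times S^1$, where $N$ is a closed oriented $3$-manifold carrying the aspherical geometry $\mathbb{X}^3$ (asphericity of $N$ is automatic since each of the six $\mathbb{X}^3$ appearing in product geometries $\mathbb{X}^3 \times \R$ giving aspherical $4$-manifolds is itself aspherical). Taking fundamental groups then produces a finite-index subgroup $\pi_1(N) \times \Z \subset \pi_1(M)$, which is precisely the virtual product decomposition claimed in the first sentence of the corollary.

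For the two ``in particular'' clauses, I would simply apply the definitions. Since $\pi_1(M)$ is a virtual product of two infinite groups, it is reducible, and by Example \ref{ex:examplesPPtf}(1) every reducible group is presentable by products via the multiplication homomorphism on the two factors. For the center, the $\Z$-factor of $\pi_1(N) \times \Z$ is central in this finite-index subgroup, and therefore $C(\pi_1(M))$ is virtually infinite in the sense of containing an infinite subgroup of finite index in its intersection with this cover.

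There is no real obstacle here: all of the geometric content, namely the promotion of the $\R$ factor in the model $\mathbb{X}^3 \times \R$ to an honest $S^1$ factor in a finite cover, has already been absorbed into Theorem \ref{t:hillmanproducts}. The corollary is a routine repackaging of that statement in group-theoretic language, and the only thing worth being careful about is matching the hypothesis ``aspherical'' with the list of admissible $\mathbb{X}^3$'s.
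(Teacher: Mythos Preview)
Your proposal is correct and matches the paper's approach exactly: the paper presents this corollary as ``an equivalent formulation of Theorem~\ref{t:hillmanproducts}'' with no further proof, which is precisely what you do by taking fundamental groups of the finite cover $N\times S^1$ and then reading off reducibility and the infinite center of the finite-index subgroup from Example~\ref{ex:examplesPPtf}. Your phrasing of ``$C(\pi_1(M))$ is virtually infinite'' is slightly awkward; the intended meaning (as in Example~\ref{ex:examplesPPtf}(2)) is simply that some finite-index subgroup of $\pi_1(M)$ has infinite center, which is immediate from the $\Z$-factor being central in $\pi_1(N)\times\Z$.
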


As mentioned previously, the geometry $\mathbb{H}^2 \times \mathbb{H}^2$ is an exceptional type among the product geometries, because not every closed $\mathbb{H}^2 \times \mathbb{H}^2$-manifold has a finite cover which is a product of two closed hyperbolic
surfaces. This property distinguishes closed $\mathbb{H}^2 \times \mathbb{H}^2$-manifolds into two classes, the reducible and the irreducible ones. Since $\mathbb{H}^2 \times \mathbb{H}^2$-manifolds admit metrics of non-positive sectional curvature, Theorem 4.1 of \cite{KotschickLoeh1} implies that
irreducible lattices in the group of isometries of $\mathbb{H}^2 \times \mathbb{H}^2$ are not presentable by products:

\begin{prop}\label{p:h2xh2PP}
 The fundamental group of a closed $\mathbb{H}^2 \times \mathbb{H}^2$-manifold $M$ is presentable by products if and only if it is a virtual product of
two closed hyperbolic surface groups. Equivalently, $M$ carries the reducible $\mathbb{H}^2 \times \mathbb{H}^2$ geometry.
\end{prop}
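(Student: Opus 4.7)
The backward direction is immediate: if $\pi_1(M)$ is virtually a direct product of two closed hyperbolic surface groups, then it is presentable by products via the multiplication homomorphism, as in Example \ref{ex:examplesPPtf}(1), and by definition such an $M$ realizes the reducible $\mathbb{H}^2 \times \mathbb{H}^2$ geometry.

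For the forward direction, my plan is to combine two classical ingredients. First, because $M$ carries the $\mathbb{H}^2 \times \mathbb{H}^2$ geometry, the product metric on $\mathbb{H}^2 \times \mathbb{H}^2$ descends to a Riemannian metric of non-positive sectional curvature on $M$. I would then apply \cite[Theorem 4.1]{KotschickLoeh1}, which says that the fundamental group of a closed non-positively curved manifold is presentable by products if and only if it is reducible. Hence some finite cover $M'$ of $M$ has $\pi_1(M') \cong \Gamma_1 \times \Gamma_2$ with both factors infinite.

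Second, I would invoke the Gromoll-Wolf isometric splitting theorem \cite{GromollWolf} to conclude that the non-positively curved manifold $M'$ splits isometrically as a Riemannian product $M' = M_1' \times M_2'$ with $\pi_1(M_i') = \Gamma_i$. Lifting to the universal cover, the resulting isometric splitting of $\mathbb{H}^2 \times \mathbb{H}^2$ must be compatible with its de Rham decomposition. That decomposition consists of exactly two irreducible factors, each isometric to $\mathbb{H}^2$; since both $\Gamma_i$ are infinite (so neither $M_i'$ can be a point), the only possibility is that each universal cover $\widetilde{M_i'}$ is $\mathbb{H}^2$. Thus each $M_i'$ is a closed hyperbolic surface, and $\pi_1(M)$ is virtually a product of two closed hyperbolic surface groups, i.e.\ $M$ carries the reducible $\mathbb{H}^2 \times \mathbb{H}^2$ geometry.

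The main point requiring care is ensuring that the Gromoll-Wolf decomposition of $M'$ produces two two-dimensional pieces rather than a $1 + 3$ or $0 + 4$ splitting. This is handled by the de Rham uniqueness argument above, combining the irreducibility of $\mathbb{H}^2$ as a symmetric space with the infiniteness of both factors $\Gamma_i$. Beyond that, the proof reduces to citing the two theorems above.
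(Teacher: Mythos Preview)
Your proof is correct and follows the same approach as the paper: both arguments rest on the fact that $\mathbb{H}^2\times\mathbb{H}^2$-manifolds are non-positively curved and then invoke \cite[Theorem~4.1]{KotschickLoeh1}. The paper stops there, absorbing the identification of the virtual factors as hyperbolic surfaces into the definition of the reducible $\mathbb{H}^2\times\mathbb{H}^2$ geometry (via Hillman's characterization~\cite[Section~9.5]{Hillman}), whereas you unpack this step explicitly with Gromoll--Wolf and de Rham uniqueness; this extra detail is sound but not needed beyond what is already packaged in the cited references.
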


\subsubsection{Solvable non-product geometries.}

We finally deal with solvable non-product geometries, i.e. the geometries $Nil^4$, $Sol^4_{m\neq n}$, $Sol^4_0$ and $Sol_1^4$. As we shall see, only
the fundamental groups of closed manifolds modeled on the geometries $Nil^4$ or $Sol_1^4$ are presentable by products.

\medskip

\subparagraph{{\em The geometry $Nil^4$.}}

\begin{prop}\label{p:nil4-mfds}
 A closed $Nil^4$-manifold $M$ is a virtual circle bundle over a closed oriented $Nil^3$-manifold and the center of $\pi_1(M)$ remains infinite cyclic in finite covers.
 \end{prop}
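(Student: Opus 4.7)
The strategy is to exploit the nilpotent structure of $Nil^4$, and in particular its upper central series.

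First, I would analyze the Lie algebra $\mathfrak{g} = \mathfrak{nil}_4$ directly from the semi-direct product description $Nil^4 = \R^3 \rtimes \R$. With a basis $\{e_1, e_2, e_3, e_t\}$ in which $e_t$ generates the $\R$-factor and the nilpotent derivation sends $e_3 \mapsto e_2 \mapsto e_1 \mapsto 0$, the only non-zero brackets are $[e_t, e_3] = e_2$ and $[e_t, e_2] = e_1$. A short computation shows that $\mathfrak{g}$ is nilpotent of class exactly three, with one-dimensional center $Z(\mathfrak{g}) = \langle e_1 \rangle$, and that the quotient $\mathfrak{g}/Z(\mathfrak{g})$ is isomorphic to the Heisenberg algebra $\mathfrak{nil}_3$. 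Exponentiating, the center $Z := Z(Nil^4)$ is a one-parameter subgroup isomorphic to $\R$ and $Nil^4/Z$ is isomorphic to the simply connected Heisenberg group $Nil^3$.

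Next, write $M = \Gamma \backslash Nil^4$ with $\Gamma$ a torsion-free uniform lattice in $\mathrm{Isom}(Nil^4)$. Passing to an initial finite cover I may assume $\Gamma \subset Nil^4$, since $\mathrm{Isom}(Nil^4)$ is a compact extension of $Nil^4$. The key step is then to invoke Mal'cev's density theorem: every lattice $\Gamma' \subseteq Nil^4$ is Zariski-dense in $Nil^4$. Hence for any $\gamma \in Z(\Gamma')$, the centralizer $C_{Nil^4}(\gamma)$ is a Zariski-closed subgroup containing $\Gamma'$ and must be all of $Nil^4$, so $\gamma \in Z$. Thus
\[
Z(\Gamma') \;=\; \Gamma' \cap Z,
\]
which is a rank-one lattice in $Z \cong \R$, hence infinite cyclic. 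Applying this to every finite-index subgroup $\Gamma' \subset \Gamma$ immediately yields the second assertion of the proposition.

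Finally, for the first assertion, the quotient map $Nil^4 \to Nil^4/Z \cong Nil^3$ sends $\Gamma \cap Z$ to the identity and descends $\Gamma$ to a uniform lattice in $Nil^3$. It therefore induces a principal circle fibration $M \to N$, where $N = \bigl(\Gamma/(\Gamma \cap Z)\bigr)\backslash Nil^3$ is a closed $Nil^3$-manifold. Pulling this bundle back along the orientation double cover $\widetilde{N} \to N$ produces a finite cover of $M$ that is a circle bundle over the closed oriented $Nil^3$-manifold $\widetilde{N}$, which is what is required.

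The main obstacle is the identification $Z(\Gamma') = \Gamma' \cap Z(Nil^4)$ for every finite-index $\Gamma' \subset \Gamma$; once Mal'cev's density theorem is invoked for lattices in the simply connected nilpotent group $Nil^4$, both assertions follow from purely structural considerations.
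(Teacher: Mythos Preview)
Your argument is correct and takes a genuinely different route from the paper's proof.

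The paper proceeds concretely: after passing to an orientable finite cover it invokes Hillman's description of a closed oriented $Nil^4$-manifold as a mapping torus of a closed oriented $Nil^3$-manifold, writes down an explicit presentation of $\pi_1(M)$ with generators $x,y,z,t$, identifies $C(\pi_1(M))=\langle z\rangle$ by hand, and observes that the quotient $Q=\pi_1(M)/\langle z\rangle$ has a presentation recognisable as a $Nil^3$-manifold group (a non-trivial circle bundle over $T^2$, because the unipotent parameter $k$ is non-zero). The fact that $k\neq 0$ also shows the center stays infinite cyclic under finite covers.

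Your approach is purely Lie-theoretic: you compute the upper central series of the Lie algebra directly, identify $Nil^4/Z(Nil^4)\cong Nil^3$, and use Mal'cev's density theorem to prove $Z(\Gamma')=\Gamma'\cap Z(Nil^4)$ for every lattice $\Gamma'$, which immediately gives both conclusions. This is cleaner and more conceptual; it explains \emph{why} the center is infinite cyclic (because $\dim Z(Nil^4)=1$) rather than verifying it from a presentation, and the same argument would work verbatim for any nilpotent Thurston geometry. What you lose is the explicit presentation, which the paper exploits in the subsequent Remark to display $M$ as a $T^2$-bundle over $T^2$ and as a mapping torus of $T^3$; you also implicitly rely on standard but nontrivial facts about lattices in simply connected nilpotent Lie groups (that $\Gamma\cap Z(G)$ is a lattice in $Z(G)$ and that the image of $\Gamma$ in $G/Z(G)$ is again a lattice). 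These are well known from Mal'cev's theory, so the trade-off is generality and elegance versus explicitness.
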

\begin{proof}
 Let $M$ be a closed $Nil^4$-manifold. After possibly passing to a double cover, we may assume that $M$ is oriented and so $\pi_1(M)$ fits into a short
exact sequence 
\[
 1 \longrightarrow \pi_1(N) \longrightarrow \pi_1(M) \longrightarrow \Z \longrightarrow 1,
\]
where $N$ is a closed oriented $Nil^3$-manifold and a generator $t \in \Z$ acts by conjugation on $\pi_1(N)$; cf. \cite[Sections 8.6 and 8.7]{Hillman}. Passing to another finite cover, if necessary, we may assume that $N$ is a non-trivial circle bundle over $T^2$ with fundamental
group
\[
 \pi_1(N) = \langle x,y,z \ \vert \ [x,y] = z, \ xz = zx, \ yz = zy \rangle,
\]
where $C(\pi_1(N)) = \langle z \rangle$; cf.~\cite{Scott:3-mfds}.

Since $M$ is a $Nil^4$-manifold, the automorphism of $\pi_1(N)/\langle z \rangle \ \cong \ \Z^2$, induced by the action of $t \in \Z$ on $\pi_1(N)$, is
given (after
possibly passing to another finite cover) by a matrix (conjugate to)
$A = \left( \begin{array}{cc}
  1 & k \\
  0 & 1 \\
\end{array} \right) \in \mathrm{GL}_2(\Z)$, for some $k \neq 0$; cf.~\cite[Theorem 8.7]{Hillman}. The relation $x^my^n =
z^{mn}y^nx^m$ in $\pi_1(N)$ gives the following presentation of $\pi_1(M)$ (see also~\cite{SakamotoFukuhara} and~\cite[pg. 522]{Ue1} for further details):
 \[
 \pi_1(M) = \langle x,y,z,t \ \vert \ txt^{-1}=x, \ tyt^{-1}=x^kyz^l, \ tzt^{-1} = z^{\det A} = z,
              [x,y]=z, \ xz=zx, \ yz=zy \rangle,
 \]
where $C(\pi_1(M)) = \langle z \rangle$. Thus we have a short exact sequence 
\begin{equation}\label{eq.nil4-mfds}
 1 \longrightarrow \langle z \rangle \longrightarrow \pi_1(M) \longrightarrow Q \longrightarrow 1,
\end{equation}
where $Q = \pi_1(M)/\langle z \rangle = \langle x,y,t \ \vert \ [t,y]=x^k, \ xt =tx, \ xy=yx \rangle$. In particular, the classifying space $BQ$ is a
non-trivial circle bundle over $T^2$ and thus a $Nil^3$-manifold. Now, the induced sequence of the classifying spaces corresponding to
(\ref{eq.nil4-mfds}) implies that $M$ is homotopically a circle bundle over $BQ$. Finally, the center of $\pi_1(M)$ remains infinite cyclic in finite covers, generated by multiples of $z$, because $k \neq 0$.
\end{proof}

\begin{rem}
Since every non-trivial nilpotent group has non-trivial center and because the property of being ``nilpotent'' is closed under subgroups and quotient groups, the proof of the
above proposition could be obtained using the fact that every nilpotent group of cohomological dimension three is either Abelian or isomorphic to $Q$ (as
in the above proof); see Proposition \ref{p:nilproperties}, Lemma \ref{l:cd=3nilpotent} and Remark \ref{r:cd=3}.  
\end{rem}

\begin{rem}\label{r:nil4mappingtorus}
Note that $\pi_1(M)$ is (virtually) an extension of $\Z^2 = \langle y,t \rangle$ by $\Z^2 = \langle z,x \rangle$, and so $M$ is (virtually) a
$T^2$-bundle over $T^2$, whose $T^2$-fiber contains the $S^1$-fiber of the circle bundle $S^1 \longrightarrow M \longrightarrow BQ$ of
the above proposition. It is a result of Ue~\cite[Theorem B]{Ue1} that every closed $Nil^4$-manifold
is a virtual $T^2$-bundle over $T^2$. We refer to a work of Fukuhara and Sakamoto~\cite{SakamotoFukuhara} for a
classification of $T^2$-bundles over $T^2$.% and the precise presentations of their fundamental groups; see also~\cite[pg.522]{Ue1}.

 We furthermore observe that $\pi_1(M)$ is (virtually) an extension of $\Z = \langle y \rangle$ by $\Z^3 = \langle z,x,t \rangle$, where the
automorphism of $\Z^3$ is given by 
$\left(\begin{array}{ccc}
   1 & -1 & -l \\
   0 & 1 & -k \\
   0 & 0 & 1 \\
\end{array} \right)$
and it has infinite order. In particular, $M$ is a (virtual) mapping torus of $T^3$; see also~\cite[Section 8.6]{Hillman}.
\end{rem}

%\medskip

\subparagraph{{\em The geometries $Sol^4_{m\neq n}$ and $Sol^4_0$.}} 

\begin{prop}\label{p:solvablenotPP}
 The fundamental group of a closed $4$-manifold possessing one of the geometries $Sol^4_{m\neq n}$ or $Sol^4_0$ is not presentable by products.
\end{prop}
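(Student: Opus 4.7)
Let $M$ be modeled on $Sol^4_{m\neq n}$ or $Sol^4_0$. Passing to a finite orientation cover, the discussion in Section \ref{ss:enumeration4-mfds} gives
\[
\pi_1(M)\cong\Z^3\rtimes_A\Z,
\]
where the monodromy $A\in\mathrm{SL}_3(\Z)$ has real eigenvalues that are not $\pm 1$: three distinct values $e^a,e^b,e^c$ with $a+b+c=0$ and $abc\neq 0$ in the case $Sol^4_{m\neq n}$, or $e^t,e^t,e^{-2t}$ with $t\neq 0$ in the case $Sol^4_0$. In particular, no eigenvalue of any nonzero power $A^k$ equals $1$, so $A^k-I$ is invertible over $\Q$ whenever $k\neq 0$. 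Since $\pi_1(M)$ is torsion-free, Example \ref{ex:examplesPPtf} (equivalently \cite[Prop.~3.2]{KotschickLoeh1}) reduces the statement to proving both (a) that no finite-index subgroup of $\pi_1(M)$ has infinite center, and (b) that $\pi_1(M)$ is not reducible.

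For (a), every finite-index subgroup of $\pi_1(M)$ contains a further finite-index normal subgroup of the shape $\Z^3\rtimes_{A^k}k\Z$. A direct computation of the centralizer of an element $(v,n)$ in such a semidirect product shows that centrality forces $A^{kn}=I$ and $(A^{km}-I)v=0$ for every $m$; the invertibility of $A^\ell-I$ for $\ell\neq 0$ then forces $n=0$ and $v=0$. Hence every finite-index subgroup of $\pi_1(M)$ has trivial center.

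The main obstacle I anticipate is part (b). The plan is to assume, toward a contradiction, that some finite-index subgroup of $\pi_1(M)$ splits as $\Gamma_1\times\Gamma_2$ with both factors infinite. Each $\Gamma_i$ inherits the torsion-free polycyclic structure from $\pi_1(M)$, and since Hirsch length is additive on direct products and invariant under passing to finite-index subgroups, we get $h(\Gamma_1)+h(\Gamma_2)=4$ with each $h(\Gamma_i)\geq 1$. I will rule out the three possibilities $(1,3)$, $(2,2)$, $(3,1)$ by invoking step (a): any torsion-free polycyclic group of Hirsch length $1$ is $\Z$ and is central in itself, while any torsion-free polycyclic group of Hirsch length $2$ is either $\Z^2$ or the Klein bottle group $\Z\rtimes_{-1}\Z$ (since $\mathrm{Aut}(\Z)=\{\pm 1\}$ and $H^2(\Z;\Z)=0$), both of which have infinite center. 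Consequently, in every case at least one factor $\Gamma_i$ has infinite center, so the center $C(\Gamma_1\times\Gamma_2)=C(\Gamma_1)\times C(\Gamma_2)$ of our finite-index subgroup of $\pi_1(M)$ is infinite, contradicting (a). Combining (a) and (b) then yields that $\pi_1(M)$ is not presentable by products, completing the plan.
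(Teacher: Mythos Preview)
Your argument is correct, but it takes a genuinely different route from the paper's. The paper invokes the notion of an \emph{acentral} subgroup from \cite{KotschickLoeh2}: it observes that for $g\in\Z^3\setminus\{0\}$ the centralizer $C_{\pi_1(M)}(g)$ is contained in $\Z^3$ (this is exactly your eigenvalue computation), so $\Z^3$ is an infinite acentral normal subgroup of infinite index, and then applies Proposition~\ref{p:kotschickloehacentral} directly. You instead use the torsion-free dichotomy of \cite[Prop.~3.2]{KotschickLoeh1} and rule out both alternatives by hand: trivial virtual center via the same centralizer computation, and irreducibility via a Hirsch-length case split. The paper's route is shorter and uses one external criterion once; your route avoids the acentral machinery entirely and is more self-contained, at the cost of the extra case analysis for (b). Note, incidentally, that your (b) is not an independent check but a consequence of (a) in this polycyclic setting---which is exactly how you end up proving it.

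One small factual slip that does not affect the logic: for $Sol^4_0$ you read off the eigenvalues $e^t,e^t,e^{-2t}$ from the Lie-group action in Section~\ref{ss:enumeration4-mfds}, but the integer monodromy $A\in\mathrm{SL}_3(\Z)$ of an actual lattice generally has a pair of complex conjugate eigenvalues (of modulus $\neq 1$) together with a real eigenvalue $\neq\pm 1$; see \cite{Wall:geom4-mfds1} and \cite[pp.~164--165]{Hillman}, as the paper cites. Either way no eigenvalue is a root of unity, so your key assertion that $A^k-I$ is invertible over $\Q$ for all $k\neq 0$ stands.
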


In order to prove Proposition \ref{p:solvablenotPP}, we will use the concept of ``acentral" subgroups which was introduced in~\cite{KotschickLoeh2}: A subgroup $A$ of a group $\Gamma$ is called {\em acentral} if for every non-trivial element $g \in A$ the centralizer $C_{\Gamma}(g)$ is contained in $A$. An {\em acentral extension} is an extension of groups $1 \longrightarrow N \longrightarrow \Gamma \longrightarrow Q \longrightarrow 1$ such that the normal subgroup $N$ is acentral.

\begin{prop}[\normalfont{\cite[Prop. 3.2]{KotschickLoeh2}}]\label{p:kotschickloehacentral}
If a group contains an infinite acentral subgroup of infinite index, then it is not presentable by products.
\end{prop}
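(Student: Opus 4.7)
The plan is to argue by contradiction, exploiting the centralizer containment that defines acentrality. Suppose $A \subset \Gamma$ is an infinite acentral subgroup of infinite index, and suppose for contradiction that $\Gamma$ is presentable by products. By Lemma \ref{l:KotschickLoehproperiesPP} and the normalization preceding it, we may assume there are element-wise commuting subgroups $\Gamma_1, \Gamma_2 \subset \overline{\Gamma}$ of a finite-index subgroup $\overline{\Gamma} \subset \Gamma$, both $\Gamma_i$ infinite, with $\overline{\Gamma} = \Gamma_1 \Gamma_2$.

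The first step is to pass to $\overline{\Gamma}$. Set $A' := A \cap \overline{\Gamma}$. Since $\overline{\Gamma}$ has finite index in $\Gamma$, the subgroup $A'$ is still infinite and has infinite index in $\overline{\Gamma}$; moreover $A'$ is acentral in $\overline{\Gamma}$ because $C_{\overline{\Gamma}}(g) = C_\Gamma(g) \cap \overline{\Gamma} \subseteq A \cap \overline{\Gamma} = A'$ for every non-trivial $g \in A'$. So it suffices to derive a contradiction at the level of $\overline{\Gamma}$.

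The key step is the following observation: for any non-trivial $g \in A'$, decomposed as $g = g_1 g_2$ with $g_i \in \Gamma_i$, each factor $g_i$ lies in $A'$. Indeed, since $\Gamma_1$ and $\Gamma_2$ commute element-wise, we have $g_2 g = g_2 g_1 g_2 = g_1 g_2 g_2 = g g_2$, so $g_2 \in C_\Gamma(g)$, and symmetrically $g_1 \in C_\Gamma(g)$. Acentrality of $A$ then forces $g_1, g_2 \in A$, hence in $A' = A \cap \overline{\Gamma}$ (noting $g_i \in \Gamma_i \subset \overline{\Gamma}$).

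Since $g \neq 1$, at least one of $g_1, g_2$ is non-trivial; say $g_1 \neq 1$. Then $g_1 \in A'$ is a non-trivial element, and its centralizer in $\Gamma$ contains $\Gamma_2$ (by element-wise commutativity). Acentrality gives $\Gamma_2 \subseteq C_\Gamma(g_1) \subseteq A$. Since $\Gamma_2$ is infinite, pick any non-trivial $h \in \Gamma_2 \subseteq A$; applying acentrality again, $\Gamma_1 \subseteq C_\Gamma(h) \subseteq A$. Consequently $\overline{\Gamma} = \Gamma_1 \Gamma_2 \subseteq A$, so $A' = \overline{\Gamma}$, contradicting the infinite index of $A'$ in $\overline{\Gamma}$. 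No step looks like a serious obstacle — the only delicate bookkeeping is the reduction to $\overline{\Gamma}$ and verifying that acentrality and the index conditions are inherited by $A \cap \overline{\Gamma}$, which is straightforward.
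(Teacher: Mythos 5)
Your proof is correct. Note that the paper does not actually prove this proposition---it is quoted verbatim from \cite{KotschickLoeh2}---so there is no internal proof to compare against; your argument (write a non-trivial $g\in A\cap\overline{\Gamma}$ as $g_1g_2$ with $g_i\in\Gamma_i$, observe that each $g_i$ centralizes $g$ and hence lies in $A$ by acentrality, then bootstrap to $\Gamma_2\subseteq C_\Gamma(g_1)\subseteq A$ and $\Gamma_1\subseteq C_\Gamma(h)\subseteq A$, so $\Gamma_1\Gamma_2\subseteq A$, contradicting infinite index) is essentially the one in that reference. One small remark: the reduction to $A'=A\cap\overline{\Gamma}$ is harmless but superfluous, since $\Gamma_1\Gamma_2\subseteq A$ already forces $[\Gamma:A]\le[\Gamma:\Gamma_1\Gamma_2]<\infty$, contradicting the hypothesis directly.
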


Proposition \ref{p:kotschickloehacentral} implies that if an extension $1 \longrightarrow N \longrightarrow \Gamma \longrightarrow Q
\longrightarrow 1$ is acentral, with $N$ and $Q$ infinite, then $\Gamma$ is not presentable by products \cite[Cor. 3.3]{KotschickLoeh2}. Also, if 
$\Gamma$ is a semi-direct product $N \rtimes_{\theta} Q$,
where $N$ is non-trivial Abelian and $Q$ is infinite acting freely outside $0\in N$, 
then the extension $0 \longrightarrow N \longrightarrow \Gamma \longrightarrow Q \longrightarrow
1$ is acentral and $N$ is infinite. In particular, $\Gamma$ is not presentable by products \cite[Cor. 3.5]{KotschickLoeh2}. This gives an alternative proof that $Sol^3$-manifold groups are not presentable by products \cite[Section 3]{KotschickLoeh2}.

\begin{proof}[Proof of Proposition \ref{p:solvablenotPP}]
 We will show that the fundamental groups of closed $Sol^4_{m\neq n}$- or $Sol^4_0$-manifolds contain acentral subgroups of infinite index. 

Every manifold $M$ with geometry modeled on $Sol^4_{m\neq n}$ or $Sol^4_0$ is a mapping torus of a
self-homeomorphism of
$T^3$ \cite[Section 8.6]{Hillman} (see also~\cite{Wall:geom4-mfds1,Wall:geom4-mfds2}) and its fundamental group is a semi-direct product $\Z^3 \rtimes_{\theta} \Z$, where the automorphism $\theta$ of $\Z^3$ is induced by the action by conjugation of a generator $t \in \Z$.

Now, if $M$ is a $Sol^4_{m\neq n}$-manifold, then $\theta$ has three real distinct eigenvalues and none of them is equal to $\pm 1$, because $M$ is neither
nilpotent nor carries the $Sol^3 \times \R$ geometry (which is the case $m = n$); cf.~\cite{Wall:geom4-mfds1} and~\cite[pg. 164/165]{Hillman} (or Section
\ref{ss:enumeration4-mfds}).

If $M$ is a $Sol^4_0$-manifold, then $\theta$ has two complex eigenvalues that are not roots of unity and a real eigenvalue not equal to $\pm 1$;
cf.~\cite{Wall:geom4-mfds1} and~\cite[pg. 164/165]{Hillman} (or Section \ref{ss:enumeration4-mfds}).

In both cases we derive that the centralizer $C_{\pi_1(M)}(g)$ of each $g \in \Z^3 \setminus \{0\}$ is contained in $\Z^3$ (it is actually equal to
$\Z^3$). This means that the infinite-index normal subgroup $\Z^3$ is acentral and so $\pi_1(M)$ is
not presentable by products by Proposition
\ref{p:kotschickloehacentral} or \cite[Cor 3.3 or Cor 3.5]{KotschickLoeh2}. 
\end{proof}

\subparagraph{{\em The geometry $Sol_1^4$.}}

\begin{prop}\label{p:sol14-mfds}
 A closed $Sol_1^4$-manifold $M$ is a virtual circle bundle over a mapping torus of $T^2$ with hyperbolic monodromy. In particular, $\pi_1(M)$ is presentable by products.
\end{prop}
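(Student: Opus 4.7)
The plan is to mimic the approach used for $Nil^4$-manifolds in Proposition~\ref{p:nil4-mfds}, but with the monodromy on the $T^2$-quotient being hyperbolic rather than unipotent. First, by the structural result quoted from~\cite[Sections 8.6 and 8.7]{Hillman}, after passing to at most a double cover I may assume $M$ is oriented, hence a mapping torus of a self-homeomorphism of a closed oriented $Nil^3$-manifold $N$. A further finite cover arranges $N$ to be a non-trivial circle bundle over $T^2$ whose fundamental group admits the standard Heisenberg presentation
\[
\pi_1(N)=\langle x,y,z \mid [x,y]=z,\ [x,z]=[y,z]=1\rangle,\qquad C(\pi_1(N))=\langle z\rangle.
\]
Thus $\pi_1(M)$ sits in a short exact sequence
\[
1\longrightarrow \pi_1(N)\longrightarrow \pi_1(M)\longrightarrow \Z\longrightarrow 1,
\]
with a generator $t\in\Z$ acting on $\pi_1(N)$ by conjugation.

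Next I would identify the induced monodromy on $\pi_1(N)/\langle z\rangle\cong\Z^2$. Because $M$ carries the $Sol_1^4$ geometry (and \emph{not} $Nil^4$ or $Nil^3\times\R$), the action of $\R$ on $Nil^3$ in the model Lie group has derivative $\mathrm{diag}(e^{-t},e^t)$ on the abelianization $\R^2=Nil^3/Z(Nil^3)$, so the induced automorphism on the lattice $\Z^2$ must be hyperbolic in $\mathrm{SL}_2(\Z)$, say $A$ with $|\mathrm{tr}(A)|>2$. Passing to a further finite cover, I may also arrange that conjugation by $t$ fixes the central generator $z$ (in general it sends $z$ to $z^{\det A}=z^{\pm 1}$, so squaring if necessary), so that $\langle z\rangle$ is central in $\pi_1(M)$. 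Setting $Q:=\pi_1(M)/\langle z\rangle$, the quotient sequence becomes
\[
1\longrightarrow \Z^2\longrightarrow Q\longrightarrow \Z\longrightarrow 1
\]
with hyperbolic monodromy $A$, i.e.\ $Q$ is the fundamental group of a closed $Sol^3$-manifold, which is a mapping torus of $T^2$ with hyperbolic monodromy. Since both $M$ and $BQ$ are aspherical, the central extension
\[
1\longrightarrow\langle z\rangle\longrightarrow \pi_1(M)\longrightarrow Q\longrightarrow 1
\]
is realised on the level of classifying spaces by a circle bundle $S^1\to M\to BQ$, proving the first assertion.

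Finally, since $z$ has infinite order and is central in $\pi_1(M)$ (after the finite covering above), the multiplication homomorphism
\[
C(\pi_1(M))\times\pi_1(M)\longrightarrow\pi_1(M)
\]
exhibits $\pi_1(M)$ as presentable by products in the sense of Example~\ref{ex:examplesPPtf}(2), and this property descends to the original $\pi_1(M)$ because it is preserved under passing to finite-index overgroups. The main technical point will be verifying that the $Sol_1^4$ geometry (as opposed to the other solvable non-product geometries) forces the monodromy on the $T^2$-quotient to be hyperbolic while preserving the central $S^1$-factor; this is where I rely on the explicit form of the $Sol_1^4$-action on $Nil^3$ together with Wall's homotopy uniqueness of the $4$-dimensional geometries cited in Section~\ref{ss:enumeration4-mfds}.
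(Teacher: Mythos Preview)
Your proposal is correct and follows essentially the same approach as the paper's proof: pass to a finite cover to realise $M$ as a mapping torus of a Heisenberg $Nil^3$-manifold, quotient by the central $\langle z\rangle$ to obtain a $Sol^3$-manifold group $Q$, and interpret the resulting central extension as a circle bundle over $BQ$. The only cosmetic differences are that the paper deduces $\det A=1$ directly from the orientation of $M$ (so no extra squaring cover is needed to make $z$ central) and argues hyperbolicity of $A$ by noting that its eigenvalues cannot equal $\pm 1$ since $\pi_1(M)$ is not nilpotent (via~\cite[Theorem 8.7]{Hillman}), rather than by appealing to the explicit $Sol_1^4$ Lie group action as you do.
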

\begin{proof}
 Let $M$ be a closed $Sol_1^4$-manifold. After passing to a double cover, we may assume that $M$ is oriented, and so
its fundamental group fits into a short exact sequence
\[
 1 \longrightarrow \pi_1(N) \longrightarrow \pi_1(M) \longrightarrow \Z \longrightarrow 1,
\]
where $N$ is a closed oriented $Nil^3$-manifold and a generator $t \in \Z$ acts by conjugation on $\pi_1(N)$; cf. \cite[Sections 8.6 and 8.7]{Hillman}. If necessary, we pass to another finite cover of $M$ and so we can assume that the fiber $N$ is a non-trivial circle bundle over $T^2$ and that its
fundamental group has a presentation 
\[
\pi_1(N) = \langle x,y,z \ \vert \ [x,y]=z, \ xz=zx, \ yz=zy \rangle
\] 
with center $C(\pi_1(N)) = \langle z \rangle$; cf.~\cite{Scott:3-mfds}.  

Let 
$A = \left( \begin{array}{cc}
  a & b \\
  c & d \\
\end{array} \right) \in \mathrm{GL}_2(\Z)$ be the automorphism of $\pi_1(N)/\langle z \rangle \cong \Z^2$ induced by the action of
$t \in \Z$ on $\pi_1(N)$. The eigenvalues $\lambda_1,\lambda_2$ of $A$ satisfy $\det(A) = \lambda_1\lambda_2 = \pm 1$. Actually, $\det A = 1$,
because $M$ is oriented. Moreover, $\lambda_i \neq \pm 1$, because $\pi_1(M)$ is not nilpotent; see also~\cite[Theorem 8.7]{Hillman}. We conclude that $A$ is a hyperbolic
automorphism.

Now the relation $x^my^n = z^{mn}y^nx^m$ in $\pi_1(N)$ implies that a presentation of the fundamental group of $M$ is given by
\begin{eqnarray*}
  \pi_1(M) = &\langle x,y,z,t \ \vert & txt^{-1}=x^ay^cz^k, \ tyt^{-1}=x^by^dz^l, \ tzt^{-1} = z^{\det A} = z,\\
             &\ & [x,y]=z, \ xz=zx, \ yz=zy \rangle, \ k,l \in \Z,
\end{eqnarray*}
where the infinite cyclic group generated by $z$ is central in $\pi_1(M)$. Thus we obtain a short exact sequence
\begin{equation}\label{eq.sol14-mfds}
 1 \longrightarrow \langle z \rangle \longrightarrow \pi_1(M) \longrightarrow Q \longrightarrow 1,
\end{equation}
where $Q = \pi_1(M)/\langle z \rangle = \langle x,y,t \ \vert \ txt^{-1}=x^ay^c, \ tyt^{-1}=x^by^d, \ xy=yx \rangle$.  Clearly, the group $Q$ fits into an
extension
\[
 1 \longrightarrow \Z^2 \longrightarrow Q \longrightarrow \Z \longrightarrow 1,
\]
where $t \in \Z$ acts on $\Z^2$ by the hyperbolic automorphism $A  = \left( \begin{array}{cc}
  a & b \\
  c & d \\
\end{array} \right)$. We have now shown that $Q$ is the fundamental group of a mapping torus of $T^2$ with
hyperbolic
monodromy, i.e. $BQ$ is a closed oriented $Sol^3$-manifold. Therefore, $M$ is homotopically a circle bundle over $BQ$, by the induced sequence in homotopy corresponding to
the short exact sequence (\ref{eq.sol14-mfds}).
\end{proof}

The proof of Theorem \ref{t:4-mfdgroupsPP} is now complete.

\subsection*{Groups not IIPP: Proof of Theorem \ref{t:4-mfdgroupsIIPP}}\label{ss:4-mfdsIIPP}

%We now prove Theorem \ref{t:4-mfdgroupsIIPP}. 
Since reducible groups are IIPP, in order to complete the proof of Theorem \ref{t:4-mfdgroupsIIPP} we need to show that the fundamental groups of manifolds modeled on
$Nil^4$ or $Sol_1^4$ are not IIPP. 

\medskip

\subparagraph{{\em Closed $Nil^4$-manifolds.}}

Since torsion-free nilpotent groups have infinite center, an immediate consequence of Proposition \ref{p:nil4-mfds} (see also Proposition \ref{p:nilproperties}) is the following: 

\begin{lem}\label{l:nil4-mfdsnotreducible}
 The fundamental group of a closed $Nil^4$-manifold is not a virtual product. 
\end{lem}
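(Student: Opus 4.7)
The plan is to derive a contradiction from the rank of the center. Assume for contradiction that $\pi_1(M)$ is a virtual product, so there is a finite-index subgroup $\overline{\Gamma}\subset\pi_1(M)$ with $\overline{\Gamma}\cong\Gamma_1\times\Gamma_2$ where each $\Gamma_i$ is infinite. Since $\pi_1(M)$ is a lattice in the nilpotent Lie group $Nil^4$, it is torsion-free nilpotent; hence $\overline{\Gamma}$ is torsion-free nilpotent, and so is each factor $\Gamma_i$ (subgroups and quotients of nilpotent groups are nilpotent).

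Next I would invoke the standard fact that every non-trivial torsion-free nilpotent group has non-trivial (hence, being torsion-free, infinite) center. Applied to each infinite factor $\Gamma_i$, this gives $\mathrm{rank}\, C(\Gamma_i)\geq 1$. Since $C(\Gamma_1\times\Gamma_2)=C(\Gamma_1)\times C(\Gamma_2)$, we conclude
\[
\mathrm{rank}\, C(\overline{\Gamma})\geq 2.
\]

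On the other hand, Proposition \ref{p:nil4-mfds} shows that the center of $\pi_1(M)$ remains infinite cyclic in every finite cover, in particular $C(\overline{\Gamma})\cong\Z$, which has rank $1$. This contradicts the previous inequality, completing the proof.

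The argument is essentially routine once Proposition \ref{p:nil4-mfds} is in hand; the only subtlety is making sure that one extracts the correct invariant of finite-index subgroups, namely the rank of the center, and that the nilpotence of $\pi_1(M)$ is inherited by arbitrary subgroups and direct factors. No further structural input about the presentation of $\pi_1(M)$ is required beyond the rank-one center.
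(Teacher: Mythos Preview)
Your proof is correct and is precisely the argument the paper has in mind: the lemma is stated there as an ``immediate consequence'' of Proposition~\ref{p:nil4-mfds} together with the fact that torsion-free nilpotent groups have infinite center, and you have simply written out that deduction in full. There is nothing to add or change.
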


In the preceding section (Proposition \ref{p:3-mfdsnotIIPP}), we showed that closed $Nil^3$-manifolds have fundamental groups not IIPP using Bieri's results~\cite{Bieri:book}, since the cohomological dimensions of those groups (and of their subgroups) were suitable for that purpose. %; see also Remark \ref{r:hirschlength}. 
Passing now one dimension higher, we cannot appeal anymore to those results. However, 
instead of the cohomological dimension, we may use the Hirsch length (which is in fact equal to the cohomological dimension for finitely generated torsion-free
nilpotent groups by a theorem of Gruenberg, cf. \cite[$\S$8.8]{Gruenberg}). 
The Hirsch length generalizes the notion of the rank of free Abelian groups:

\begin{defn}[\cite{Gruenberg}]
 Let $\Gamma$ be a (virtually) polycyclic group with a series 
\[
 \Gamma = \Gamma_0 \supset \Gamma_1 \supset \cdots \supset \Gamma_n = 1,
\]
so that the quotients $\Gamma_i/\Gamma_{i+1}$ are cyclic. The sum of the ranks of these quotients is independent of the choice of the series of groups
and is called the {\em Hirsch length} of $\Gamma$. We denote the Hirsch length of $\Gamma$ by $h(\Gamma)$.
\end{defn}

\begin{ex}\label{ex:Hirschnil}
A finitely generated torsion-free nilpotent group $\Gamma$ is polycyclic, admitting a central series
$
 \Gamma = \Gamma_0 \supset \Gamma_1 \supset \cdots \supset \Gamma_n = 1,
$
 so that the quotients $\Gamma_i/\Gamma_{i+1}$ are infinite
cyclic for all $i=0,1,...,n-1$.
Therefore $\Gamma$ has a well-defined Hirsch length (equal to $n$).
\end{ex}

The following proposition gives some basic properties of nilpotent groups and their Hirsch length. For a proof see \cite[pg. 75--77]{Neothesis}.

\begin{prop}\label{p:nilproperties}
Let $\Gamma$ be a finitely generated nilpotent group.
\begin{itemize}
\item[(1)] If $\Gamma$ is torsion-free, then $C(\Gamma)$ has positive rank and $\Gamma/C(\Gamma)$ is torsion-free. 
\item[(2)] If $K$ is a normal subgroup of $\Gamma$, then $K$ and $\Gamma/K$ are finitely generated nilpotent and \newline
(a) $h(\Gamma)=h(K)+h(\Gamma/K)$;
(b) $h(K)=h(\Gamma)$ if and only if $[\Gamma : K] < \infty$.
\end{itemize}
\end{prop}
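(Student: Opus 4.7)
The plan is to prove the three statements in turn, working with the upper central series for part (1) and with the polycyclic structure of finitely generated nilpotent groups for part (2).

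For part (1), I would start with the upper central series $1 = Z_0 \subset Z_1 \subset \cdots \subset Z_c = \Gamma$, where $Z_1 = C(\Gamma)$ and $c$ is the nilpotency class. Since $\Gamma$ is nontrivial and nilpotent, $Z_1$ is nontrivial, and as a subgroup of the torsion-free group $\Gamma$ it is itself torsion-free; being finitely generated Abelian and nontrivial, it has positive rank. For the torsion-freeness of $\Gamma/C(\Gamma)$, I would argue by induction on the nilpotency class $c$. The base case $c=1$ is trivial since $\Gamma$ is Abelian. For the inductive step, suppose $g\in\Gamma$ satisfies $g^n\in C(\Gamma)$; then for every $h\in\Gamma$ one has $[g^n,h]=1$. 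Using commutator identities in the quotient $\Gamma/Z_{c-1}$ (which is nilpotent of smaller class, and torsion-free by the inductive step applied to its center), I would show $[g,h]\in Z_{c-1}$ and then, iterating, that $[g,h]=1$. The key bookkeeping is that once $\Gamma/Z_{c-1}$ is known to be torsion-free, $n$-th powers are detected faithfully in each successive quotient $Z_i/Z_{i-1}$.

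For part (2), subgroups and quotients of a finitely generated nilpotent group are again finitely generated nilpotent, because finitely generated nilpotent groups are polycyclic (hence all subgroups are finitely generated) and nilpotency is preserved by both subgroups and quotients. For (a), I would fix a polycyclic series of $\Gamma$ with cyclic factors
\[
\Gamma = \Gamma_0 \supset \Gamma_1 \supset \cdots \supset \Gamma_n = 1,
\]
and form the two induced series $K\cap \Gamma_i$ and $K\Gamma_i/K$. Each factor $(K\cap\Gamma_i)/(K\cap\Gamma_{i+1})$ embeds in $\Gamma_i/\Gamma_{i+1}$, so it is cyclic; each factor $K\Gamma_i/K\Gamma_{i+1}$ is a cyclic quotient of $\Gamma_i/\Gamma_{i+1}$. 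A direct rank count using the short exact sequence
\[
1 \longrightarrow (K\cap\Gamma_i)/(K\cap\Gamma_{i+1}) \longrightarrow \Gamma_i/\Gamma_{i+1} \longrightarrow K\Gamma_i/K\Gamma_{i+1} \longrightarrow 1
\]
shows that the ranks add in each degree, which gives $h(\Gamma)=h(K)+h(\Gamma/K)$ once one knows Hirsch length is well-defined (which itself follows from the Schreier refinement / Jordan–Hölder style argument for polycyclic series). For (b), note that $h(K)=h(\Gamma)$ is equivalent to $h(\Gamma/K)=0$ by (a); and $h(\Gamma/K)=0$ is equivalent to every cyclic factor in a polycyclic series of $\Gamma/K$ being finite, which in turn is equivalent to $\Gamma/K$ being finite.

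The main obstacle I expect is the inductive step in part (1) — concluding that $g^n$ central forces $g$ central in a torsion-free nilpotent group. This is the content that drives the torsion-freeness of every successive central quotient $Z_i/Z_{i-1}$, and it is the only place where a nontrivial commutator computation is needed; everything else is either a formal manipulation of exact sequences or a direct application of the structure theory of polycyclic groups.
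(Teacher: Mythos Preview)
The paper does not give a proof of this proposition; it simply cites the author's thesis. So there is no ``paper's proof'' to compare against, and your proposal stands on its own.

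Your treatment of part (2) is entirely correct and standard: intersecting a polycyclic series with $K$ and projecting to $\Gamma/K$ gives polycyclic series for each, and the short exact sequence of cyclic factors you wrote down is exact (using normality of $K$ to ensure each $K\Gamma_i$ is a subgroup with $K\Gamma_{i+1}\trianglelefteq K\Gamma_i$). Additivity of rank over these sequences gives (a), and (b) follows immediately since a polycyclic group has Hirsch length zero iff it is finite.

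For part (1), your strategy is right but the inductive bookkeeping is slightly off. You invoke the quotient $\Gamma/Z_{c-1}$ and say it is ``torsion-free by the inductive step applied to its center''; however $\Gamma/Z_{c-1}$ is abelian, and at that point you have no reason yet to know it is torsion-free---that is essentially what you are trying to prove. The clean way to organise the induction (this is Robinson, \emph{A Course in the Theory of Groups}, 5.2.19) is: show directly that $Z_2/Z_1$ is torsion-free, and then apply the inductive hypothesis to $\Gamma/Z_1$, which has class $c-1$ and now has torsion-free centre $Z_2/Z_1$. The direct step is exactly the commutator computation you allude to: if $x\in Z_2$ and $x^n\in Z_1$, then for any $g$ the element $[x,g]$ lies in $Z_1$ and is therefore central, so $1=[x^n,g]=[x,g]^n$; torsion-freeness of $Z_1\subset\Gamma$ forces $[x,g]=1$, hence $x\in Z_1$. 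Once every $Z_{i+1}/Z_i$ is torsion-free, $\Gamma/C(\Gamma)$ is torsion-free because it has a finite normal series with torsion-free abelian factors. With this correction your argument goes through.
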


Using Proposition \ref{p:nilproperties}, we determine all torsion-free nilpotent groups of Hirsch length three. 

\begin{lem}\label{l:cd=3nilpotent}
 A torsion-free nilpotent group $\Gamma$ of Hirsch length three is isomorphic to 
 $
 G_n := \langle x, y, z \ \vert \ zy=yz, zx=xz, [x,y]=z^n \rangle,
$
 for some $n \geq 0$. In
particular, $\Gamma$ is the fundamental group of a circle bundle over $T^2$.
\end{lem}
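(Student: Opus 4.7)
The plan is to partition according to the Hirsch length of the center $C(\Gamma)$. By Proposition \ref{p:nilproperties}(1), since $\Gamma$ is finitely generated torsion-free nilpotent, the center $C(\Gamma)$ has positive rank and the quotient $\Gamma/C(\Gamma)$ is itself torsion-free nilpotent. By the additivity of the Hirsch length (Proposition \ref{p:nilproperties}(2)(a)), we have $h(C(\Gamma))+h(\Gamma/C(\Gamma))=3$, so $h(C(\Gamma))\in\{1,2,3\}$. First I would eliminate the middle case: if $h(C(\Gamma))=2$, then $h(\Gamma/C(\Gamma))=1$, and being torsion-free nilpotent of Hirsch length one it must be $\Z$. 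Lifting a generator of $\Z$ to some $t\in\Gamma$, the whole group $\Gamma$ is generated by $C(\Gamma)\cup\{t\}$, and since every element of $C(\Gamma)$ commutes with $t$, the group $\Gamma$ would be Abelian, forcing $\Gamma=C(\Gamma)$ and $h(\Gamma)=2$, contradicting $h(\Gamma)=3$.

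Next, if $h(C(\Gamma))=3$, then $h(\Gamma/C(\Gamma))=0$, so $C(\Gamma)$ has finite index in $\Gamma$ by Proposition \ref{p:nilproperties}(2)(b); as $\Gamma/C(\Gamma)$ is torsion-free it must be trivial, and so $\Gamma=C(\Gamma)$ is torsion-free Abelian of rank three, i.e. $\Gamma\cong\Z^3=G_0$.

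The main case is $h(C(\Gamma))=1$, where $C(\Gamma)\cong\Z$, say generated by an element $z$, and $\Gamma/C(\Gamma)$ is torsion-free nilpotent of Hirsch length two. The only such group is $\Z^2$ (the rank-$2$ Abelian case follows, for instance, by applying the same Hirsch-length trichotomy to this quotient and observing that the cases $h(C)\in\{0,1\}$ force a rank-$1$ center, leading again to an Abelian conclusion). Picking lifts $x,y\in\Gamma$ of generators of $\Z^2$, the commutator $[x,y]$ lies in $C(\Gamma)=\langle z\rangle$, so $[x,y]=z^n$ for a unique $n\in\Z$ (replacing $z$ by $z^{-1}$ if necessary to arrange $n\geq 1$). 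There is then a surjection $G_n\twoheadrightarrow\Gamma$ sending the generators to $x,y,z$, and I would establish injectivity by showing that every element of $G_n$ can be written uniquely in normal form $x^a y^b z^c$ using the relations, and then observing that the composition $G_n\to\Gamma\to\Gamma/C(\Gamma)=\Z^2$ forces $a=b=0$ on the kernel, while $\langle z\rangle$ is infinite cyclic in $\Gamma$ so also $c=0$.

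Finally, to identify $\Gamma$ as the fundamental group of a circle bundle over $T^2$, I would note that in every case the central extension
\[
1\longrightarrow\langle z\rangle\longrightarrow\Gamma\longrightarrow\Z^2\longrightarrow 1
\]
(trivial for $n=0$, non-trivial with Euler number $n$ for $n\geq 1$) is realized by the classifying space construction as a principal $S^1$-bundle over $T^2=B\Z^2$ whose Euler class is $n$ times the generator of $H^2(T^2;\Z)\cong\Z$. The main obstacle I expect is the bookkeeping for the normal form and injectivity in the case $h(C(\Gamma))=1$, although this is essentially the classical fact that any central extension of $\Z^2$ by $\Z$ is determined up to isomorphism by its Euler class.
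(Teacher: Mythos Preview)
Your proof is correct and follows essentially the same route as the paper's own argument: both split on the Hirsch length of the central quotient (equivalently, of the center), use Proposition~\ref{p:nilproperties} to identify the quotient as torsion-free nilpotent of smaller Hirsch length, and reduce to the central extension $1\to\Z\to\Gamma\to\Z^2\to1$. The only cosmetic difference is that the paper partitions on $h(\Gamma/C(\Gamma))\in\{0,1,2\}$ and lumps the cases $h=0,1$ together (noting the extension splits when the quotient is $\Z$), whereas you partition on $h(C(\Gamma))$ and explicitly observe that $h(C(\Gamma))=2$ is vacuous since it forces $\Gamma$ Abelian and hence $C(\Gamma)=\Gamma$; your treatment of the injectivity of $G_n\to\Gamma$ and of the circle-bundle identification is simply more detailed than the paper's one-line conclusion.
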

\begin{proof}
 First, we observe that $\Gamma$ is finitely generated, because it is nilpotent of finite Hirsch length. Moreover, since $\Gamma$ is torsion-free, we have that its center $C(\Gamma)$ is free Abelian of positive rank, the quotient
group $Q:= \Gamma/C(\Gamma)$ is again nilpotent and torsion-free and the short exact sequence 
\begin{equation}\label{eq.hirsch}
 1 \longrightarrow C(\Gamma) \longrightarrow \Gamma \longrightarrow Q \longrightarrow 1,
\end{equation}
yields that $0 \leq h(Q) \leq 2$, because $h(C(\Gamma)) \geq 1$; cf Proposition \ref{p:nilproperties}. 

If $h(Q)=0$ or $1$, then it is easy to see that $\Gamma$ is free Abelian of rank three. Indeed, this is obvious if $h(Q)=0$. If
$h(Q)=1$, then $Q$ is infinite cyclic, and therefore the central extension (\ref{eq.hirsch}) splits. Suppose, finally, that $h(Q)=2$. Since $Q$ is torsion-free nilpotent, it has non-trivial center $C(Q)$. Therefore, it fits into a short exact sequence 
 \[
 1 \longrightarrow C(Q) \longrightarrow Q \longrightarrow Q/C(Q) \longrightarrow 1,
\]
where the quotient $Q/C(Q)$ is again a torsion-free nilpotent group. By the additivity of the Hirsch length for the above exact sequence, we deduce that
$h(Q/C(Q) \leq 1$. This finally implies that $Q$ is free Abelian of rank two. Therefore, the central extension (\ref{eq.hirsch}) takes the form
\[
 1 \longrightarrow \Z \longrightarrow \Gamma \longrightarrow \Z^2 \longrightarrow 1.
\]
Choosing presentations $\Z = \langle z \rangle$ and $\Z^2 = \langle x,y \ \vert \ [x,y]=1 \rangle$, we deduce that $\Gamma$ is isomorphic to $G_n$ for some
$n \geq 0$.
\end{proof}

\begin{rem}\label{r:cd=3}
In the light of Gruenberg's theorem~\cite[$\S$8.8]{Gruenberg}, Lemma \ref{l:cd=3nilpotent} determines all finitely generated nilpotent groups of cohomological dimension three. %(Note that groups of finite cohomological dimension are torsion-free.)
Moreover, it yields another
proof of the fact that closed $Nil^4$-manifolds are virtual circle bundles over closed
oriented $Nil^3$-manifolds; compare Proposition \ref{p:nil4-mfds}.
\end{rem}

We now finish the proof that $Nil^4$-manifold groups are not IIPP:

\begin{prop}\label{p:nil4-mfdsnotIIPP}
 The fundamental group of a closed $Nil^4$-manifold $M$ is not IIPP.
\end{prop}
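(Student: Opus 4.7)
The plan is to argue by contradiction via a Hirsch-length budget. Suppose $\pi_1(M)$ is IIPP. Following the standard reduction in Section \ref{s:preliminariesPP}, I would first replace $\pi_1(M)$ by a finite index subgroup $\bar\Gamma$ together with element-wise commuting subgroups $\Gamma_1, \Gamma_2 \subset \bar\Gamma$, both of infinite index in $\bar\Gamma$, such that $\bar\Gamma = \Gamma_1\Gamma_2$. Passing to a further finite cover if needed, Proposition \ref{p:nil4-mfds} guarantees $C(\bar\Gamma)\cong\mathbb{Z}$, and Lemma \ref{l:KotschickLoehproperiesPP} then places $\Gamma_1\cap\Gamma_2$ inside this infinite cyclic center. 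Since $\bar\Gamma$ is finitely generated torsion-free nilpotent of Hirsch length $4$ with finitely generated center, Lemma \ref{l:finitelygeneratedcenter} implies each $\Gamma_i$ is finitely generated, hence itself a finitely generated torsion-free nilpotent group.

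Next I would invoke additivity of Hirsch length (Proposition \ref{p:nilproperties}(2)(a)) for the short exact sequence
\[
1 \longrightarrow \Gamma_1 \cap \Gamma_2 \longrightarrow \Gamma_1 \times \Gamma_2 \longrightarrow \bar\Gamma \longrightarrow 1
\]
to obtain the identity $h(\Gamma_1) + h(\Gamma_2) = h(\Gamma_1 \cap \Gamma_2) + 4$. The infinite-index hypothesis combined with Proposition \ref{p:nilproperties}(2)(b) forces $h(\Gamma_i) \leq 3$, while $h(\Gamma_1 \cap \Gamma_2) \in \{0,1\}$ because $\Gamma_1 \cap \Gamma_2 \hookrightarrow \mathbb{Z}$. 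These constraints leave exactly two cases to rule out.

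If $h(\Gamma_1 \cap \Gamma_2) = 0$, then the multiplication map is injective and $\bar\Gamma \cong \Gamma_1 \times \Gamma_2$ is a direct product of two infinite groups, contradicting Lemma \ref{l:nil4-mfdsnotreducible}. If $h(\Gamma_1 \cap \Gamma_2) = 1$, then $h(\Gamma_1) + h(\Gamma_2) = 5$ with each summand at most $3$, so up to swapping indices we may assume $h(\Gamma_2) = 2$. Rerunning the classification argument of Lemma \ref{l:cd=3nilpotent} one dimension lower shows that any torsion-free nilpotent group of Hirsch length $2$ is free abelian of rank $2$; hence $\Gamma_2 \cong \mathbb{Z}^2$. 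But then $\Gamma_2$ is abelian and commutes element-wise with $\Gamma_1$, so $\Gamma_2 \subset C(\Gamma_1\Gamma_2) = C(\bar\Gamma) \cong \mathbb{Z}$, which is absurd. The only substantive obstacle I anticipate is the finite-cover bookkeeping in the first step: one must verify that after the reductions the $\Gamma_i$ retain infinite index in $\bar\Gamma$ and that $C(\bar\Gamma)$ stays infinite cyclic, both of which are handled cleanly by Section \ref{s:preliminariesPP} and Proposition \ref{p:nil4-mfds} respectively, after which the Hirsch-length count closes the argument with no further work.
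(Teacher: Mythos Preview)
Your proof is correct and follows the same overall architecture as the paper's: set up the multiplication presentation, use irreducibility (Lemma \ref{l:nil4-mfdsnotreducible}) to force $\Gamma_1\cap\Gamma_2\cong\Z$, then run the Hirsch-length budget to land in the case $\{h(\Gamma_1),h(\Gamma_2)\}=\{3,2\}$ and identify the rank-$2$ factor as $\Z^2$.

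The only genuine difference is the endgame. The paper classifies the rank-$3$ factor via Lemma \ref{l:cd=3nilpotent} as some $G_n$, observes that $C(\Gamma_1\times\Gamma_2)$ then has rank at least $3$, and invokes the forward-reference Lemma \ref{l:extensionscenter} to contradict the short exact sequence (\ref{eq.circlebundlenil}). Your conclusion is more direct: once $\Gamma_2\cong\Z^2$ is abelian and commutes element-wise with $\Gamma_1$, it lies in $C(\Gamma_1\Gamma_2)=C(\bar\Gamma)\cong\Z$, an immediate absurdity. This bypasses both Lemma \ref{l:cd=3nilpotent} (for $\Gamma_1$) and Lemma \ref{l:extensionscenter} entirely, so your route is slightly more elementary while the paper's version has the virtue of isolating a reusable center-rank bound (Lemma \ref{l:extensionscenter}) that it later applies again in Section \ref{s:Heisenberg}.
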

\begin{proof}
We know that $\pi_1(M)$ is presentable by products and that $M$ is virtually a non-trivial circle bundle over a closed oriented $Nil^3$-manifold (cf. Proposition \ref{p:nil4-mfds}). We need to prove that $\pi_1(M)$ cannot be
presented by a product of subgroups $\Gamma_1$ and $\Gamma_2$ so that both $\Gamma_i$ have infinite index in $\pi_1(M)$. We proceed again by contradiction.
After passing to suitable finite index subgroups, suppose that there exist two infinite-index commuting subgroups $\Gamma_i \subset \pi_1(M)$ and a short
exact sequence
\begin{equation}\label{eq.circlebundlenil}
 1 \longrightarrow \Gamma_1 \cap \Gamma_2 \longrightarrow \Gamma_1 \times \Gamma_2 \stackrel{\varphi}\longrightarrow \pi_1(M) \longrightarrow 1,
\end{equation}
where $\varphi$ is the multiplication map and $\Gamma_1 \cap \Gamma_2 \subset C(\pi_1(M))$; cf. Section \ref{s:preliminariesPP}. Also we have that
$C(\pi_1(M)) = \Z$ (by Proposition \ref{p:nil4-mfds}) and that the $\Gamma_i$ are finitely generated, torsion-free and nilpotent.

Since $\pi_1(M)$ is not reducible (Lemma \ref{l:nil4-mfdsnotreducible}), we conclude that the intersection $\Gamma_1 \cap \Gamma_2$ is not trivial and so it must be infinite cyclic, as a subgroup of $C(\pi_1(M)) = \Z$. Also, Proposition \ref{p:nilproperties} and Lemma \ref{l:cd=3nilpotent} imply that the Hirsch length of $\pi_1(M)$ is $h(\pi_1(M))=4$. Applying again the additivity of the Hirsch length to the short exact sequence (\ref{eq.circlebundlenil}), we have that $h(\Gamma_1 \times \Gamma_2) = 5$. Since both $\Gamma_i$ have infinite index in $\pi_1(M)$, we deduce that $h(\Gamma_i) \leq 3$ by Proposition \ref{p:nilproperties}. Therefore, one of the $\Gamma_i$ must be of Hirsch length three and the other of Hirsch length two (again by Proposition \ref{p:nilproperties}). Let us assume that $h(\Gamma_1)=3$ and $h(\Gamma_2)=2$. Since $\Gamma_1$ is a torsion-free nilpotent group of Hirsch length three, Lemma \ref{l:cd=3nilpotent} implies that $\Gamma_1$ isomorphic to $G_n$ for some $n \geq 0$. Moreover, $\Gamma_2$ is isomorphic to $\Z^2$, because it is torsion-free nilpotent of Hirsch length two (see the proof of Lemma \ref{l:cd=3nilpotent}). We have now reached the conclusion that the rank of the center of $\Gamma_1 \times \Gamma_2$ is at least three. This is however not possible, according to the next lemma, because $\Gamma_1\times\Gamma_2$ is an extension of $\pi_1(M)$ by $\Z$ and $C(\pi_1(M)) = \Z$.
\end{proof}

\begin{lem}\label{l:extensionscenter}
 If a group $\Gamma$ with finitely generated center $C(\Gamma)$ fits into a central extension
$
1 \longrightarrow \Z^k \longrightarrow \Gamma \stackrel{\pi}\longrightarrow Q \longrightarrow 1,
$ 
where $Q$ is torsion-free, then $\mathrm{rank} C(\Gamma) \leq \mathrm{rank} C(Q) + k$.
\end{lem}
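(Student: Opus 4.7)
The plan is to exploit the centrality of the extension to show that $\pi$ restricts to a map $C(\Gamma) \to C(Q)$ with kernel exactly $\Z^k$, and then to read off the rank inequality from additivity of rank in a short exact sequence of (finitely generated) abelian groups.

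First I would observe that because the given extension is central, $\Z^k$ lies inside $C(\Gamma)$, so $C(\Gamma) \cap \ker(\pi) = \Z^k$. Next, I claim $\pi(C(\Gamma)) \subseteq C(Q)$: for any $c \in C(\Gamma)$ and any $q \in Q$, lift $q$ to some $g \in \Gamma$ (using surjectivity of $\pi$) and compute $\pi(c)\,q = \pi(cg) = \pi(gc) = q\,\pi(c)$. Thus $\pi$ restricts to a homomorphism $C(\Gamma) \to C(Q)$ whose kernel is precisely $\Z^k$, yielding the short exact sequence
\[
0 \longrightarrow \Z^k \longrightarrow C(\Gamma) \stackrel{\pi}{\longrightarrow} \pi(C(\Gamma)) \longrightarrow 0.
\]

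Since $C(\Gamma)$ is finitely generated abelian by hypothesis, so are $\Z^k$ and $\pi(C(\Gamma))$; hence torsion-free rank is additive in this sequence, giving
\[
\mathrm{rank}\, C(\Gamma) \;=\; k \,+\, \mathrm{rank}\, \pi(C(\Gamma)).
\]
Finally, as $\pi(C(\Gamma))$ is a subgroup of $C(Q)$, and $Q$ being torsion-free makes $C(Q)$ a torsion-free abelian group for which rank is monotone under inclusion, we conclude $\mathrm{rank}\, \pi(C(\Gamma)) \leq \mathrm{rank}\, C(Q)$, and the desired inequality follows.

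There is essentially no hard step here; the only thing to be careful about is to use centrality of the extension (which forces $\Z^k \subseteq C(\Gamma)$) rather than a generic intersection argument, so that the kernel term in the exact sequence is exactly $\Z^k$ and contributes precisely $k$ to the rank. The torsion-freeness of $Q$ is used only to keep $C(Q)$ torsion-free, so that its rank dominates the rank of any subgroup in the usual sense.
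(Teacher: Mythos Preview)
Your argument is correct and follows exactly the same idea as the paper's one-line proof, which simply records that $\pi$ carries $C(\Gamma)$ into $C(Q)$; you have just spelled out the routine rank bookkeeping that the paper leaves implicit. One minor remark: rank is monotone under inclusion for arbitrary abelian groups (tensor with $\Q$, which is flat), so the torsion-freeness of $Q$ is not actually needed at the point where you invoke it.
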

\begin{proof}
 It follows by the fact that if $x \in C(\Gamma)$, then $\pi(x) \in C(Q)$. 
\end{proof}

\subparagraph{{\em Closed $Sol_1^4$-manifolds.}}

We finally deal with the $Sol_1^4$ geometry.

\begin{prop}\label{p:sol14-mfdsIIPP}
 The fundamental group of a closed $Sol_1^4$-manifold is not IIPP.
\end{prop}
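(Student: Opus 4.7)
The plan is to apply Theorem \ref{thmC} to reduce the claim to an impossibility result about geometries of finite covers. By Proposition \ref{p:sol14-mfds} (and since the property IIPP is preserved under passing to finite-index subgroups), after replacing $M$ by a suitable finite cover we may assume that $M$ is a non-trivial circle bundle $M \longrightarrow BQ$ over a closed oriented $Sol^3$-manifold $BQ$, with $Q = \pi_1(M)/\langle z\rangle$. Since $Sol^3$ is not among the Seifert geometries $\widetilde{SL_2}$, $\mathbb{H}^2\times\R$, $Nil^3$, $\R^3$, Proposition \ref{p:3-mfdsPPSeifert} tells us that $Q$ is not presentable by products. Thus the hypotheses of Theorem \ref{thmC} are satisfied, and that theorem yields the equivalence: $\pi_1(M)$ is IIPP if and only if $M$ is finitely covered by a product $S^1 \times B'$ for some finite cover $B' \longrightarrow BQ$.

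It therefore suffices to exclude the latter alternative. Suppose, for contradiction, that some finite cover $M''$ of $M$ is diffeomorphic (or even homotopy equivalent, since $M''$ is aspherical) to $S^1 \times B'$, where $B'$ is a closed oriented $Sol^3$-manifold. Then $M''$ carries the product geometry $Sol^3 \times \R$. On the other hand, if $M = Sol_1^4/\Gamma$, then $M'' = Sol_1^4/\Gamma''$ for some finite-index subgroup $\Gamma'' \subset \Gamma$, so $M''$ also carries the geometry $Sol_1^4$. By Wall's homotopy uniqueness of the $4$-dimensional Thurston geometries (recalled in Section \ref{ss:enumeration4-mfds}), a closed aspherical $4$-manifold possesses at most one of the listed geometries; since $Sol_1^4$ and $Sol^3 \times \R$ are distinct, this is a contradiction. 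Hence $\pi_1(M)$ is not IIPP.

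No real obstacle arises: the three ingredients (the virtual circle bundle structure of $Sol_1^4$-manifolds over $Sol^3$-manifolds, the non-presentability of $Sol^3$-groups by products, and Wall's homotopy uniqueness of the $4$-dimensional geometries) are all already established earlier in the paper, and they dovetail through Theorem \ref{thmC}. An essentially equivalent argument avoids Theorem \ref{thmC} by invoking Theorem \ref{thmD} directly: the quotient $\pi_1(M)/C(\pi_1(M)) \cong Q$ is a $Sol^3$-manifold group and hence not presentable by products, so IIPP is equivalent to reducibility of $\pi_1(M)$. A reducible $\pi_1(M)$ would (after passage to a finite cover and using that $C(\pi_1(M))\cong \Z$ remains so in finite covers, together with the fact that a finite-index subgroup of $Q$ is not presentable by products) force $\pi_1(M)$ to be virtually $\Z\times Q$, making $M$ virtually homotopy equivalent to $S^1\times B'$ and hence $Sol^3\times\R$-geometric by Wall, the same contradiction.
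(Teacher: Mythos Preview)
Your proof is correct and essentially matches the paper's own argument. The paper invokes Theorem \ref{thmD} directly (your alternative route): since $\pi_1(M)/C(\pi_1(M))$ is a $Sol^3$-manifold group and hence not presentable by products, IIPP would force $\pi_1(M)$ to be reducible, hence virtually $\pi_1(N)\times\Z$, contradicting Wall's uniqueness; your primary route through the equivalence $(4)\Leftrightarrow(2)$ of Theorem \ref{thmC} is a minor repackaging of the same three ingredients (Proposition \ref{p:sol14-mfds}, Proposition \ref{p:3-mfdsPPSeifert}, and Wall's theorem).
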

\begin{proof}
 By Proposition \ref{p:sol14-mfds}, a closed $Sol_1^4$-manifold $M$ is a virtual circle bundle over a closed oriented $Sol^3$-manifold $N$. In particular,
its fundamental group $\pi_1(M)$ satisfies all the assumptions of Theorem \ref{thmD} (recall that $\pi_1(N)$ is not presentable by
products; see Proposition \ref{p:3-mfdsPPSeifert}). Thus, $\pi_1(M)$ is IIPP if and only if it is a
virtual product $\pi_1(N) \times \Z$. The latter is impossible by Wall's uniqueness theorem \cite[Theorem 10.1]{Wall:geom4-mfds2}.
\end{proof}

This finishes the proof of Theorem \ref{t:4-mfdgroupsIIPP} and therefore the proof of Theorem \ref{thmE}.

\section{Domination by products in dimension four (proof of Theorem \ref{thmF})}

In this section we determine which geometric $4$-manifolds are (not) dominated by products. To this end, we combine the algebraic results of the previous section  
with the topological statements of Theorem \ref{t:KotschickLoehmain} and Theorems \ref{thmB} and \ref{thmC} to prove Theorem \ref{thmF}, which completes Hillman's characterization of product geometries given in Theorem \ref{t:hillmanproducts}.

\begin{proof}[Proof of theorem \ref{thmF}]
Theorem \ref{t:hillmanproducts} says that closed manifolds possessing one of the product geometries $\mathbb{X}^3 \times \R$ are finitely covered by products of type $N\times S^1$. Moreover, closed manifolds carrying the reducible $\mathbb{H}^2 \times \mathbb{H}^2$ geometry are virtually products of closed hyperbolic surfaces. In particular, all those manifolds are dominated by products. (Note that domination by products alone follows by Theorem \ref{thmA} as well.) 

By Wall's uniqueness theorem of the $4$-dimensional geometries, it suffices to show that closed $4$-manifolds possessing either a hyperbolic geometry, the irreducible $\mathbb{H}^2 \times \mathbb{H}^2$ geometry, or a non-product solvable geometry cannot be dominated by products.

For the hyperbolic geometries $\mathbb{H}^4$ and $\mathbb{H}^2(\C)$, the irreducible $\mathbb{H}^2 \times \mathbb{H}^2$ geometry, and the solvable geometries $Sol_{m \neq n}^4$ and $Sol_0^4$ our claim can be deduced by Theorem \ref{t:KotschickLoehmain}, because the fundamental groups of closed $4$-manifolds carrying one of those geometries are not presentable by products; see Section \ref{ss:4-mfdsPP} for the details.

If now $M$ is a closed $Nil^4$-manifold, then it is a virtual circle bundle over a closed oriented $Nil^3$-manifold and the center of its fundamental group remains infinite cyclic in finite covers; cf. Proposition \ref{p:nil4-mfds}. Moreover, $\pi_1(M)$ is not IIPP, by Proposition \ref{p:nil4-mfdsnotIIPP}, and so Theorem \ref{thmB} implies that $M$ cannot be dominated by products.

Finally, if $M$ is a closed $Sol_1^4$-manifold, then it is a virtual circle bundle over a closed oriented $Sol^3$-manifold and $\pi_1(M)$ is not IIPP; cf. Propositions \ref{p:sol14-mfds} and \ref{p:sol14-mfdsIIPP} respectively. Therefore, $M$ is not dominated by products by Theorem \ref{thmC}, because closed $Sol^3$-manifolds have fundamental groups not presentable by products. (Equivalently, $M$ is not dominated by products because of the equivalence between $(1)$ and $(2)$ in Theorem \ref{thmC} and by Wall's uniqueness theorem.)

The proof is now complete.
\end{proof}

Combining Theorem \ref{thmE} with the characterizations of groups (infinite-index) presentable by products in Section \ref{s:PP&IIPP}, we obtain the following purely algebraic characterization:

\begin{cor}\label{c:answerq24D}
A closed oriented aspherical geometric $4$-manifold $M$ is dominated by a product if and only if 
\begin{itemize}
 \item[\normalfont{(1)}] either $\pi_1(M)$ is a virtual product $\pi_1(N) \times \Z$, for some closed aspherical geometric $3$-manifold $N$, or
 \item[\normalfont{(2)}] $\pi_1(M)$ is a virtual product of two closed hyperbolic surface groups. 
\end{itemize} 
\end{cor}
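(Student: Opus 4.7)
The plan is to read this corollary as a purely algebraic reformulation of Theorem \ref{thmF}, so the strategy is to translate the geometric characterization of Theorem \ref{thmF} into a statement about $\pi_1(M)$, and then to use Theorem \ref{thmA} to close the converse. No genuinely new argument is needed; the content lies in invoking the correct previously-established result at each step.

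For the forward direction, suppose $M$ is dominated by a non-trivial product. Theorem \ref{thmF} then forces $M$ to carry either a product geometry $\mathbb{X}^3\times\R$ or the reducible $\mathbb{H}^2\times\mathbb{H}^2$ geometry. In the first case I would invoke Hillman's Theorem \ref{t:hillmanproducts}: $M$ is finitely covered by $N\times S^1$ for some closed oriented $\mathbb{X}^3$-manifold $N$, so at the level of fundamental groups we obtain a finite-index subgroup of $\pi_1(M)$ isomorphic to $\pi_1(N)\times\Z$, which is exactly condition (1). In the second case, the definition of the reducible $\mathbb{H}^2\times\mathbb{H}^2$ geometry recalled in Section \ref{ss:enumeration4-mfds} means by construction that $M$ is finitely covered by a product of two closed hyperbolic surfaces, so $\pi_1(M)$ is virtually a product of two closed hyperbolic surface groups, giving (2).

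For the converse, assume (1) or (2). In both cases $\pi_1(M)$ is reducible in the sense of the introduction: it has a finite-index subgroup that is a direct product of two infinite groups. Since $M$ is a closed oriented aspherical manifold, Theorem \ref{thmA} now applies and yields a non-trivial direct product of closed oriented manifolds dominating $M$.

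The only point requiring care, and the closest thing to an obstacle, is making sure the geometric-to-algebraic translation in the forward direction is watertight: for the $\mathbb{X}^3\times\R$ geometries this is not automatic from the mere existence of the geometry and genuinely uses Hillman's splitting theorem, and one must also implicitly use Wall's uniqueness of the four-dimensional geometries (already used in the proof of Theorem \ref{thmF}) to know that (1) and (2) exhaust all possibilities. Once those are correctly cited, the corollary follows by assembling Theorem \ref{thmF}, Theorem \ref{t:hillmanproducts}, the definition of the reducible $\mathbb{H}^2\times\mathbb{H}^2$ geometry and Theorem \ref{thmA}.
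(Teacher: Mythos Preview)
Your proposal is correct and follows essentially the same route the paper intends: the corollary is stated immediately after Theorem \ref{thmF} as a direct algebraic reformulation, and your argument (Theorem \ref{thmF} plus Theorem \ref{t:hillmanproducts} for the forward direction, Theorem \ref{thmA} for the converse) is precisely the assembly the paper has in mind. The paper's one-line preamble emphasizes Theorem \ref{thmE} and the Section \ref{s:PP&IIPP} classifications, but these are already absorbed into the proof of Theorem \ref{thmF}, so your citation of Theorem \ref{thmF} is the cleaner packaging of the same content.
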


\section{The $5$-dimensional Heisenberg manifold}\label{s:Heisenberg}

In this section we give the first example of an aspherical manifold whose fundamental group is IIPP and irreducible, and show that this manifold is not dominated by products. 

\begin{lem}\label{l:H5}
The $5$-dimensional Heisenberg group
\[
H_5=\langle x,y,u,v,z \ \vert \ [x,y]=[u,v]=z, \ \text{\normalfont{all other commute}} \rangle
\]
is IIPP and irreducible.
\end{lem}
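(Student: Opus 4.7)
The plan is to exhibit an explicit IIPP-presentation of $H_5$ by two commuting subgroups of infinite index, and then to rule out any virtual direct-product decomposition via a rank-of-center argument.

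For IIPP, I would set $\Gamma_1 := \langle x,y\rangle$ and $\Gamma_2 := \langle u,v\rangle$. Each $\Gamma_i$ contains $z=[x,y]=[u,v]$ and is isomorphic to the integer $3$-dimensional Heisenberg group, hence is finitely generated, torsion-free, nilpotent, of Hirsch length $3$. The defining relations make $\{x,y\}$ commute with $\{u,v\}$, so $\Gamma_1$ and $\Gamma_2$ commute element-wise, and their union generates $H_5$. Therefore the multiplication map $\varphi\colon\Gamma_1\times\Gamma_2\to H_5$ is a well-defined surjective homomorphism. Since $h(H_5)=5$ while $h(\Gamma_i)=3$, Proposition~\ref{p:nilproperties}(2)(b) forces $[H_5:\Gamma_i]=\infty$ for $i=1,2$, which proves that $H_5$ is IIPP.

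For irreducibility, a direct commutator computation in the class-$2$ group $H_5$ yields $C(H_5)=\langle z\rangle\cong\Z$. The crux of the argument is then that $\mathrm{rank}\,C(G)=1$ for \emph{every} finite-index subgroup $G\le H_5$. This follows from the isolation property of centralizers in torsion-free nilpotent groups: if $n>0$ and $[g,h^n]=1$, then $[g,h]=1$. Since every $h\in H_5$ has some positive power in $G$, this gives $C(G)=C(H_5)\cap G$, which has rank $1$. Now if some finite-index $G\le H_5$ split as $A\times B$ with $A,B$ infinite, both factors would be finitely generated, torsion-free, nilpotent (as subgroups of $H_5$), and Proposition~\ref{p:nilproperties}(1) would give $\mathrm{rank}\,C(A), \mathrm{rank}\,C(B)\ge 1$, whence $\mathrm{rank}\,C(G)\ge 2$, a contradiction. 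Therefore $H_5$ is irreducible.

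The only non-formal step is the rank-of-center invariance under passage to finite-index subgroups, and this is where I would spend the most care. An equivalent alternative is to invoke the Mal'cev completion, noting that every finite-index subgroup of $H_5$ has the same simply connected nilpotent Lie hull, whose Lie-algebra center is one-dimensional; this pins down $\mathrm{rank}\,C(G)=1$ for every lattice $G\le H_5$.
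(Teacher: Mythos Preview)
Your proof is correct, and for the IIPP part it is identical to the paper's: the subgroups $\langle x,y\rangle$ and $\langle u,v\rangle$ are exactly the paper's $\Gamma_1,\Gamma_2$ (since $z=[x,y]=[u,v]$ is already in each), and the paper simply asserts infinite index where you invoke Hirsch length via Proposition~\ref{p:nilproperties}.

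For irreducibility the underlying obstruction is the same --- a virtual product decomposition would force the center to have rank $\geq 2$ --- but the executions differ. The paper writes any finite-index subgroup $\overline{H_5}$ as a central $\Z$-extension of a finite-index subgroup of $\Z^4$ and then argues directly from the explicit relations $[x^m,y^n]=z^{mn}$, $[u^m,v^n]=z^{mn}$ that a product splitting would make $z$ torsion; implicitly this is computing that $C(\overline{H_5})=\overline{H_5}\cap\langle z\rangle$. You instead isolate this step cleanly via the general fact that centralizers in torsion-free nilpotent groups are isolated (so $C(G)=C(H_5)\cap G$ for every finite-index $G$), with the Mal'cev completion as an alternative. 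Your route is more conceptual and immediately reusable for other lattices in the same nilpotent Lie group, while the paper's is more elementary and self-contained, needing nothing beyond the commutator identity $[g,h^n]=[g,h]^n$ in a class-$2$ group.
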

\begin{proof}
We observe that the groups
$
\Gamma_1=\langle x,y,z \ \vert \ [x,y]=z, \ xz=zx, \ yz=zy\rangle$ and $\Gamma_2=\langle u,v,z \ \vert \ [u,v]=z, \ uz=zu, \ vz=zv\rangle$ are infinite index subgroups of $H_5$ that commute element-wise, and $\Gamma_1\Gamma_2=H_5$. In particular, $H_5$ is presented by the product $\Gamma_1\times\Gamma_2$ through the multiplication homomorphism $\Gamma_1\times\Gamma_2\longrightarrow H_5$. Thus $H_5$ is IIPP.
 
Now we show that $H_5$ is irreducible. First of all, $H_5$ is not a product itself, otherwise its center would have been of rank greater than $1$, because subgroups of nilpotent groups are nilpotent themselves and torsion-free nilpotent groups have infinite center. Second, every finite index subgroup $\overline{H_5} \subset H_5$ fits into a short exact sequence
\[
1\longrightarrow\langle z^k \rangle\longrightarrow\overline{H_5}\longrightarrow Q\longrightarrow1,
\]
where $Q$ is a finite index subgroup of $\Z^4=H_5/\langle z\rangle$. In particular, $Q$ is isomorphic to $\Z^4$, generated by some powers of $x,y,u$ and $v$. If $\overline{H_5}$ were a direct product, then the relations $[x^m,y^n]=z^{mn}$ and $[u^m,v^n]=z^{mn}$ in $\overline{H_5}$ would imply that $z$ is a torsion element. However, $H_5$ is torsion-free and the lemma follows. 
\end{proof}

The classifying space of $H_5$ is a non-trivial circle bundle over $T^4$, called the $5$-dimensional Heisenberg manifold. 

\begin{thm}\label{t:H5}
The Heisenberg manifold $BH_5$ is not dominated by products.
\end{thm}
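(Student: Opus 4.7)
The plan is to assume for contradiction that $f \colon X_1 \times X_2 \to BH_5$ is a map of non-zero degree with $\dim X_i \ge 1$, and to derive a contradiction via a case analysis on the subgroups $\Gamma_i := \mathrm{im}(\pi_1(f\vert_{X_i})) \subseteq H_5$ they induce. After replacing $BH_5$ by a finite cover — which is again a non-trivial circle bundle over a $4$-torus with non-torsion Euler class, and whose fundamental group is again a lattice in the Heisenberg Lie group with centre $\cong \Z$ — I may assume $f$ is $\pi_1$-surjective. The framework of Section~\ref{s:notation} then yields the short exact sequence
$$1 \longrightarrow \Gamma_1 \cap \Gamma_2 \longrightarrow \Gamma_1 \times \Gamma_2 \stackrel{\varphi}\longrightarrow H_5 \longrightarrow 1$$
with $\Gamma_1\cap\Gamma_2 \subseteq C(H_5) = \langle z\rangle$, together with non-vanishing classes $\alpha_i \in H_{\dim X_i}(B\Gamma_i;\Q)$ whose image under $H_5(B\varphi)$ is $\deg(f)\cdot[BH_5]$; each $\Gamma_i$ is finitely generated, torsion-free, and nilpotent. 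Two degenerate cases would then be immediately ruled out: if $\Gamma_1\cap\Gamma_2 = \{1\}$, then $H_5 \cong \Gamma_1\times\Gamma_2$ contradicts the irreducibility of Lemma~\ref{l:H5}; and if some $\Gamma_i$ has finite index in $H_5$, then additivity of the Hirsch length (Proposition~\ref{p:nilproperties}) together with $\Gamma_1\cap\Gamma_2\cong\Z$ forces the other factor to be $\cong\Z$ and contained in $C(H_5)$, so non-vanishing of the corresponding $\alpha$ class gives $S^1$ as one factor of the product, contradicting the Factorization Lemma~\ref{l:KotschickNeofytidisgeneralization}.

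The hard case is when both $\Gamma_i$ have infinite index in $H_5$, so that $h(\Gamma_1)+h(\Gamma_2) = 6$. Here I would exploit the symplectic geometry of the Euler class: the central extension $1\to\langle z\rangle\to H_5\to\Z^4\to 1$ has Euler class the symplectic form $e = x^*\wedge y^* + u^*\wedge v^*$ on $\Z^4$, and a direct commutator calculation in $H_5$ shows that element-wise commutation of $\Gamma_1$ with $\Gamma_2$ is equivalent to $e$ vanishing on $\bar\Gamma_1\times\bar\Gamma_2$, where $\bar\Gamma_i\subseteq\Z^4$ is the image of $\Gamma_i$. Combined with $\bar\Gamma_1+\bar\Gamma_2 = \Z^4$, this forces $\mathrm{rank}\,\bar\Gamma_1+\mathrm{rank}\,\bar\Gamma_2 = 4$; since any rank-$1$ subgroup of $\Z^4$ lies in its own $e$-perpendicular (of rank $3$), the splits $(1,3)$ and $(3,1)$ are impossible, and the same observation excludes Lagrangian rank-$2$ factors. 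Thus both $\bar\Gamma_i$ are non-degenerate symplectic $2$-planes, and each $\Gamma_i$ is a non-abelian torsion-free nilpotent group of Hirsch length $3$, i.e.\ $\Gamma_i \cong G_{n_i}$ for some $n_i\neq 0$ in the notation of Lemma~\ref{l:cd=3nilpotent}, with $B\Gamma_i$ a $3$-dimensional nilmanifold.

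The main obstacle and final step will be showing that $(B\varphi)^*[\mu] = 0$ in $H^5(B\Gamma_1\times B\Gamma_2;\Q)$, where $\mu$ represents the fundamental cohomology class of $BH_5$. Computing rational cohomology via Chevalley--Eilenberg complexes (Nomizu's theorem), choose a symplectic basis of $\Z^4$ with $\bar\Gamma_1\otimes\Q = \langle X,Y\rangle$ and $\bar\Gamma_2\otimes\Q = \langle U,V\rangle$, and denote the generators of $H^*(B\Gamma_i;\Q)$ by $A_i^*, B_i^*, Z_i^*$ with $dZ_i^* = -n_i A_i^*\wedge B_i^*$. The pullbacks $\varphi^*(X^*), \varphi^*(Y^*)$ lie entirely in $H^1(B\Gamma_1;\Q) = \langle A_1^*, B_1^*\rangle$, and an invertible-matrix computation gives $\varphi^*(X^*\wedge Y^*) = \lambda_1 A_1^*\wedge B_1^*$ with $\lambda_1\neq 0$; symmetrically $\varphi^*(U^*\wedge V^*) = \lambda_2 A_2^*\wedge B_2^*$ with $\lambda_2\neq 0$. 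Since $A_i^*\wedge B_i^* = -\tfrac{1}{n_i} dZ_i^*$ is exact, it vanishes in $H^2(B\Gamma_i;\Q)$, so the K\"unneth formula yields $(B\varphi)^*[\mu] = [\varphi^*(X^*\wedge Y^*)]\smile [\varphi^*(U^*\wedge V^*\wedge Z^*)] = 0$. Because $f$ factors up to homotopy through $B\varphi\colon B\Gamma_1\times B\Gamma_2\to BH_5$, this forces $f^*[\mu] = 0$ and hence $\deg(f) = 0$ — the desired contradiction.
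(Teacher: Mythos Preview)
Your argument is correct and takes a genuinely different route from the paper's in the decisive case. The setup and the two degenerate cases match the paper's Case~I (and the irreducibility observation). Your symplectic dimension count to rule out the $(1,3)$, $(3,1)$, and Lagrangian $(2,2)$ splits is a clean alternative to the paper's Lemma~\ref{l:extensionscenter} argument on the rank of the centre of $\Gamma_1\times\Gamma_2$; both are valid. The real divergence is in the symplectic $(2,2)$ case (the paper's Case~III): the paper realises the classes $\alpha_i$ by a $Nil^3$-manifold $N$ and a $2$-torus, obtains a non-zero degree map $T^2\times N\to BH_5$, rewrites $T^2\times N$ as $S^1\times(S^1\times N)$, and invokes the Factorization Lemma. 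You instead compute directly via Nomizu/Chevalley--Eilenberg that $(B\varphi)^*$ vanishes on $H^5$. Your approach is more self-contained and avoids the Thom-style realisation step; the paper's approach is slicker in that it recycles Case~I rather than computing.

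One point to tighten: the identity $(B\varphi)^*[\mu] = [\varphi^*(X^*\wedge Y^*)]\smile [\varphi^*(U^*\wedge V^*\wedge Z^*)]$ is not literally a product of cohomology classes, since $U^*\wedge V^*\wedge Z^*$ is \emph{not} closed in $\Lambda^*\mathfrak{h}_5^*$ (one checks $d(U^*\wedge V^*\wedge Z^*) = -X^*\wedge Y^*\wedge U^*\wedge V^*$). The fix is immediate: either write $[\mu] = -[X^*\wedge Y^*]\smile[X^*\wedge Y^*\wedge Z^* - U^*\wedge V^*\wedge Z^*]$ (the second factor \emph{is} closed), and then $\varphi^*[X^*\wedge Y^*] = \lambda_1[A_1^*\wedge B_1^*]=0$ finishes; or compute $\varphi^*(\mu)$ directly as a form and observe that under K\"unneth it decomposes as
\[
\lambda_1^2\lambda_2\,[A_1^*\wedge B_1^*\wedge Z_1^*]\otimes[A_2^*\wedge B_2^*]\;+\;\lambda_1\lambda_2^2\,[A_1^*\wedge B_1^*]\otimes[A_2^*\wedge B_2^*\wedge Z_2^*],
\]
both summands vanishing since $[A_i^*\wedge B_i^*]=0$ in $H^2(B\Gamma_i;\Q)$. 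Either way your conclusion $(B\varphi)^*[\mu]=0$ stands.
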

\begin{proof}
Suppose, for contrast, that there exists a map of non-zero degree $f\colon X_1\times X_2\longrightarrow BH_5$ which is $\pi_1$-surjective, after possibly replacing $BH_5$ by a finite cover;  this cover will remain a non-trivial circle bundle over $T^4$ with center $C(H_5)\cong\Z$, by Lemma \ref{l:H5}. We have a short exact sequence
\begin{eqnarray}\label{eq.BH5}
 1 \longrightarrow \Gamma_1\cap\Gamma_2 \longrightarrow \Gamma_1 \times \Gamma_2 \stackrel{\varphi}\longrightarrow H_5 \longrightarrow 1,
\end{eqnarray}
where $\Gamma_i := \mathrm{im}(\pi_1(f\vert_{X_i})) \subset H_5$, \ $\Gamma_1\cap\Gamma_2 \subset C(H_5) = \Z$ and $\varphi$ is the multiplication homomorphism; see Sections \ref{s:notation} and \ref{s:preliminariesPP} for the details. In particular, $\Gamma_1\cap\Gamma_2$ is isomorphic to $\Z$, because $H_5$ is irreducible.
Since the $\Gamma_i$ are finitely generated torsion-free nilpotent groups, the above exact sequence implies that it suffices (up to order) to examine what happens when the Hirsch length of $\Gamma_1$ takes the values one, two and three.

\medskip
\noindent{\bf Case I.}  Suppose first that $h(\Gamma_1)=1$. Then $X_1=S^1$ (see the proof of Theorem \ref{thmC}), and so $S^1\times X_2\geq BH_5$. The latter is not possible by the Factorization Lemma \ref{l:KotschickNeofytidisgeneralization}.

\medskip
\noindent{\bf Case II.} Next, suppose that $h(\Gamma_1)=2$. Then $\Gamma_1$ is isomorphic to $\Z^2$ (see the proof of Lemma \ref{l:cd=3nilpotent}). Since $\Gamma_2$ is nilpotent and torsion-free, its center has positive rank. Thus $\mathrm{rank} C(\Gamma_1\times\Gamma_2)\geq3$, which contradicts Lemma \ref{l:extensionscenter} because $\Gamma_1\times\Gamma_2$ fits into the short exact sequence (\ref{eq.BH5}), where $\Gamma_1\cap\Gamma_2$ and $C(H_5)$ are both infinite cyclic.

\medskip
\noindent{\bf Case III.} The only remaining case is when $h(\Gamma_1)=3$. In that case, $h(\Gamma_2)=3$. By Lemma \ref{l:cd=3nilpotent}, each of the groups $\Gamma_i$ is isomorphic to some $G_{n_i}=\langle x,y,z \ \vert \ [x,y]=z^{n_i}, \ xz=zx, \ yz=zy \rangle$, for $n_i \geq 0$. As in Case II, none of the $\Gamma_i$ can be Abelian of rank greater than 1, by Lemma \ref{l:extensionscenter}. We conclude that $n_i \geq 1$. 
According to the proof of Theorem \ref{t:KotschickLoehmain} (cf. Section \ref{s:notation}) and because $BH_5$ is aspherical, there exist two non-trivial homology classes $\alpha_i\in H_{\dim X_i}(B\Gamma_i;\Q)$ such that
\begin{eqnarray*}
H_5(B\varphi)(\alpha_1 \times \alpha_2)=  \deg(f) \cdot [BH_5].
\end{eqnarray*}
Moreover, we know by Case I that one of the $X_i$ must have dimension two and the other dimension three. Without loss of generality, suppose that $\dim X_1=3$ and $\dim X_2=2$. Recall that, for $n \geq 1$, every $G_n$ is realizable by a non-trivial circle bundle over $T^2$. Therefore the cycle $\alpha_1\in H_3(B\Gamma_2;\Q)$ is realized by such a nilpotent $3$-manifold $N$ and the cycle $\alpha_2\in H_2(B\Gamma_2;\Q)$ is realized by $T^2$. This means that there exists a continuous composite map  
\[
T^2\times N\longrightarrow B\Gamma_1\times B\Gamma_2 \longrightarrow BH_5,
\]
which in degree $5$ homology maps the fundamental class $[T^2\times N]$ to a non-trivial multiple of $[BH_5]$. In particular, the product $S^1\times (S^1\times N)$ dominates $BH_5$ which is impossible as we have seen in Case I.

\medskip
After having examined all the possibilities for the groups $\Gamma_1$ and $\Gamma_2$, we conclude that $BH_5$ cannot be dominated by products.
\end{proof}

\bibliographystyle{amsplain}

\end{document}